\documentclass{amsart}
\usepackage{amssymb,amsmath,mathrsfs,amsfonts}
\usepackage{amscd, amssymb, amsmath, amsthm, graphics}
\usepackage{amsmath,amsfonts,amsthm,amssymb}
\usepackage[all]{xy}

\usepackage[colorlinks=true]{hyperref}

\newtheorem{theorem}{Theorem}[section]
\newtheorem{lemma}[theorem]{Lemma}
\newtheorem{proposition}[theorem]{Proposition}
\newtheorem{corollary}[theorem]{Corollary}

\newtheorem*{theorem*}{Theorem}

\theoremstyle{definition}
\newtheorem{definition}[theorem]{Definition}

\newtheorem{remark}[theorem]{Remark}
\newtheorem{problem}[theorem]{Problem}

\renewcommand{\t}{ \widetilde}
\renewcommand{\hat}{ \widehat}

\newcommand{\Z}{{\mathbb Z}}
\newcommand{\R}{{\mathbb R}}

\newcommand{\C}{{\mathbb C}}

\newcommand{\Hi}{{\bf H}}

\newcommand{\Sft}{{\widetilde{\mathrm{SL}_2(\mathbb{R})}}}

\renewcommand{\S}{\bf S}

\newcommand{\co}{\colon\thinspace}

\newcommand{\SV}{{\mathrm{SV}}}
\newcommand{\CSV}{{\mathrm{CSV}}}
\newcommand{\HV}{{\mathrm{HV}}}
\newcommand{\CHV}{{\mathrm{CHV}}}



\begin{document}

\title[Simplicial volume and virtual Seifert volume for $3$-manifolds]{Positive simplicial volume implies virtually positive Seifert volume for $3$-manifolds}


\author{Pierre Derbez}
\address{LATP UMR 7353, 39 rue  Joliot-Curie, 13453 Marseille Cedex 13, France}
\email{pderbez@gmail.com}
\author{Yi Liu}
\address{Beijing International Center for Mathematical Research, Peking University, Beijing 100871, China}
\email{liuyi@math.pku.edu.cn}
\author{Hongbin Sun}
\address{Department of Mathematics, University of California at Berkeley, CA 94720, USA}
\email{hongbins@math.berkeley.edu}
\author{Shicheng Wang}
\address{Department of Mathematics, Peking University, Beijing 100871, China}
\email{wangsc@math.pku.edu.cn}



\subjclass[2010]{57M50, 51H20}
\keywords{Seifert volume, non-zero degree map, growth rate.}


\begin{abstract}
%
%

In this paper, it is shown that for any closed orientable $3$-manifold
with positive simplicial volume, the growth of the Seifert volume of its finite
covers is faster than the linear rate. In particular, each closed orientable $3$-
manifold with positive simplicial volume has virtually positive Seifert volume.
The result reveals certain fundamental difference between the representation
volume of the hyperbolic type and the Seifert type. The proof is based on
developments and reactions of recent results on virtual domination and on
virtual representation volumes of $3$-manifolds.
\end{abstract}

\maketitle
\tableofcontents

\section{Introduction}

The representation volume of 3-manifolds is a beautiful
theory, exhibiting rich connections with many branches of
mathematics. The behavior of those volume functions
appears to be quite mysterious;
for example, their values are hard to predict
except in very few nice cases. On the other hand,
for most motivating applications,
it suffices to estimate the growth of such volumes for finite covers
of the considered 3-manifold.
In this paper, we are intended to investigate the possibility
of the latter, which is  as interesting
as a topic on its own right.


To be more specific,
let us introduce some basic notations 
and mention some known properties of the representation volume.
Let $G$ be either
$${\rm Iso}_+{\Hi}^3\cong{\rm PSL}(2;{\C}),$$
the orientation-preserving isometry group of
the 3-dimensional hyperbolic geometry, or
$${\rm Iso}_e\t{\rm SL_2(\R)}\cong\R\times_\Z\t{{\rm SL}_2(\R)},$$
the identity component of the isometry group of the Seifert geometry. 
For any closed orientable 3-manifold $N$ and any representation $\rho\co\pi_1N\to G$,
denote by ${\rm vol}_G(N,\rho)$ the (unsigned) volume of $\rho$.
We denote the set of $G$--representation volumes of $N$ by
$${\rm vol}(N,G)\,=\,\left\{{\rm vol}_G(N,\rho)\,:\,\rho \textrm{ any representation}\ \pi_1N\to G\right\},$$
which is a subset of the interval $[0,+\infty)$.

The following theorem contains a collection of fundamental facts in the theory of representation volumes
\cite{BG1,Re-rationality}. 

\begin{theorem}\label{basic property of volume of presentation}
	Let $N$ be a closed orientable $3$-manifold.
	\begin{enumerate}
		\item The sets of values ${\rm vol}(N,{\rm Iso}_+{\Hi}^3)$ and ${\rm vol}(N, {\rm
		Iso}_e\t{{\rm SL}_2(\R)})$ are both finite.
		Hence the following  values
		$${\HV}(N)\,=\,\max{\rm vol}\left(N,{\rm Iso}_+{\Hi}^3\right)$$
		and
		$${\SV}(N)\,=\,\max{\rm vol}\left(N, {\rm Iso}_e\t{{\rm SL}_2(\R)}\right)$$
		exist in $[0,+\infty)$ depending only on $N$.
		\item If $N$ admits a hyperbolic geometric structure, then ${\HV}(N)$ equals
		the usual hyperbolic volume of $N$,
		reached by any discrete and faithful representation.
		A similar statement holds for $\SV(N)$ when $N$ admits a Seifert geometric structure.
		\item If $P_1,\cdots,P_s$ are the prime factors of $N$ in the Kneser--Milnor decomposition,
		then
		$${\HV}(N)\,=\,{\HV}(P_1)+\cdots+{\HV}(P_s).$$
		A similar formula holds for $\SV(N)$.
		\item
		For any map $f\co M\to N$ between closed orientable $3$-manifolds,
		$${\HV}(M)\geq|{\rm deg}f|\cdot{\HV}(N).$$
		The same comparison holds for $\SV(M)$ and $\SV(N)$.
	\end{enumerate}
\end{theorem}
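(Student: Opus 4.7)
The four assertions split naturally: (1) is a finiteness/rigidity claim, (2) is a realisation claim, (3) is an additivity claim for connected sums, and (4) is a naturality claim under maps. I would prove them in the order (4), (3), (2), (1), since each of the first three relies only on expressing the representation volume as the pairing of a natural characteristic class against the fundamental class, whereas (1) rests on a more delicate local constancy argument.

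For (4), I would express
$$\mathrm{vol}_G(N,\rho)\,=\,\bigl|\langle (B\rho)^*\omega_G,[N]\rangle\bigr|,$$
where $\omega_G$ is the $G$-invariant volume class on $BG^{\delta}$. In the case $G=\mobgp$, $\omega_G$ is the real part of the Cheeger--Chern--Simons class built from the hyperbolic volume form on $\Hyp$; in the Seifert case $G=\Isom_e\Sft$ the analogous differential character constructed by Brooks--Goldman plays the same role. Naturality of the cap product together with $f_*[M]=\deg(f)\cdot[N]$ gives $\mathrm{vol}_G(M,\rho\circ f_*)=|\deg f|\cdot\mathrm{vol}_G(N,\rho)$. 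Taking the maximum over all representations of $M$ (which include those of the form $\rho\circ f_*$) yields both the hyperbolic and the Seifert versions of (4).

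For (3), the fundamental group of the connected sum is the free product $\pi_1P_1*\cdots*\pi_1P_s$, so a representation $\rho$ is recorded as an $s$-tuple $(\rho_1,\ldots,\rho_s)$. Cutting $N$ along the separating $2$-spheres and capping off with $3$-balls (any representation extends over a ball since $\pi_1 S^2$ is trivial) shows that $\omega_G$ evaluates additively on the pieces, so $\mathrm{vol}_G(N,\rho)=\sum_i\mathrm{vol}_G(P_i,\rho_i)$. Maximising each summand independently yields additivity for $\HV$ and $\SV$. For (2), a discrete faithful representation $\rho_0$ of a hyperbolic (resp.\ Seifert) $N$ gives $\mathrm{vol}_G(N,\rho_0)$ equal to the geometric volume by construction. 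The upper bound $\mathrm{vol}_G(N,\rho)\leq v_3\|N\|$ from Gromov--Thurston straightening (and its Brooks--Goldman Seifert analogue), combined with $\|N\|=\Vol(N)/v_3$ for closed hyperbolic $N$, forces $\rho_0$ to be volume-maximising.

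The main obstacle is (1). The representation variety $\Hom(\pi_1N,G)$ is a real algebraic variety with finitely many connected components, so finiteness of $\mathrm{vol}(N,G)$ reduces to showing that the volume function is locally constant on $\Hom(\pi_1N,G)$. I would do this by differentiating the pairing $\langle (B\rho_t)^*\omega_G,[N]\rangle$ along a smooth path: the derivative at $t=0$ is a transgression term whose cocycle representative is exact on a closed $3$-manifold, so the pairing against $[N]$ vanishes. For $G=\mobgp$ this is essentially Reznikov's rationality theorem for the secondary regulator; for $G=\Isom_e\Sft$ the Brooks--Goldman Cheeger--Chern--Simons construction yields the same local constancy. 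Local constancy together with finitely many components of the representation variety gives finiteness of $\mathrm{vol}(N,G)$, whence $\HV(N)$ and $\SV(N)$ are attained. The hard step is verifying the transgression vanishes in the Seifert setting, where $G$ is not semisimple and one must work with the Cheeger--Chern--Simons formalism rather than with a purely Lie-algebraic cocycle.
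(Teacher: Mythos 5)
The paper does not actually prove this theorem: it is quoted as a collection of known results and cited to Brooks--Goldman \cite{BG1,BG2} and Reznikov \cite{Re-rationality}, so there is no internal proof to compare your proposal against. That said, your outline broadly reconstructs the arguments of the literature: the characteristic-class/naturality picture for (4), the free-product decomposition for (3), the geometric realisation for (2), and the ``locally constant on a real algebraic variety with finitely many components'' mechanism for (1) are all the right ingredients, and your awareness that the Seifert case is harder because $\Isom_e\Sft$ is not semisimple is a genuine point of difficulty in Brooks--Goldman.

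There is, however, a concrete gap in your treatment of part (2) in the Seifert case. You argue that the discrete faithful representation is volume-maximising by invoking the Gromov--Thurston bound $\mathrm{vol}_G(N,\rho)\le v_3\|N\|$ ``and its Brooks--Goldman Seifert analogue.'' For $G=\Isom_+\Hyp$ the straightening bound is exactly what is needed, since $\|N\|=\Vol(N)/v_3$ for closed hyperbolic $N$. But for a closed $\Sft$-manifold $N$ the simplicial volume $\|N\|$ is \emph{zero}, while $\SV(N)=4\pi^2\chi_O^2/|e|>0$; any bound of the form $\mathrm{vol}_{\Isom_e\Sft}(N,\rho)\le C\cdot\|N\|$ would therefore force $\SV(N)=0$, contradicting the statement you are trying to prove. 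The actual ``Seifert analogue'' in Brooks--Goldman is not a simplicial-volume straightening bound but a Milnor--Wood-type inequality: the volume of $\rho$ is controlled through the Euler number of the induced flat $\Sft$-bundle over the base orbifold, which is bounded by the Euler characteristic of that orbifold. This is a genuinely different mechanism, and gesturing at an ``analogue'' of the Gromov--Thurston inequality papers over the step you would actually have to carry out. Apart from this, your sketch of (1) via Reznikov-style local constancy plus finiteness of components of $\Hom(\pi_1N,G)$ is the right idea, though making the transgression vanish rigorously in the non-semisimple $\Isom_e\Sft$ case is nontrivial, as you correctly flag.
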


The values ${\HV}(N)$ and ${\SV}(N)$
in the conclusion of
Theorem \ref{basic property of volume of presentation} (1)
are called the \emph{hyperbolic volume} and the \emph{Seifert volume} of $N$, respectively.
In light of Theorem \ref{basic property of volume of presentation} (3),
we assume from now on that all the closed orientable 3-manifolds
considered are prime, unless specified otherwise.
This is especially convenient when we speak of the geometric decomposition of the 3-manifold.

%

\begin{remark}\label{relation with $D(M,N)$}
	Representation volumes are introduced and studied by R.~Brooks and W.~Goldman \cite{BG1,BG2}
	as a generalization of the simplicial volume originally due to M.~Gromov \cite{Gr}.
	Among the eight 3-dimensional geometries of W.~P.~Thurston, ${\Hi}^3$ and $\t{{\rm SL}_2(\R)}$
	are the only two that yield nontrivial invariants, the hyperbolic volume and the Seifert volume,
	accordingly.
	Recall that the \emph{simplicial volume} of a closed orientable 3-manifold $N$ is defined to be
	$v_3\|N\|$, the product of the Gromov norm $\|N\|$ of $N$
	and the volume $v_3$ of the ideal regular hyperbolic tetrahedron;
	it is known to equal to	the sum of the classical hyperbolic volume of the hyperbolic pieces
	\cite{So}.
	
	Like the simplicial volume, the volumes of Brooks--Goldman satisfy the \emph{domination property},
	as stated by Theorem \ref{basic property of volume of presentation} (4).
	It follows that if either of the volumes ${\HV}(N)$ or ${\SV}(N)$
	is positive, then	the set of mapping degrees $D(M,N)$
	of $N$ by any given 3-manifold $M$ must be finite.
	Unlike the simplicial volume,
	neither the hyperbolic volume nor the Seifert volume satisfies the \emph{covering property},
	\cite[Corollary 1.8]{DLW};
	see Section \ref{Sec-onCoveringInvariants} for some further discussion.
\end{remark}

It can be inferred from Theorem \ref{basic property of volume of
presentation} and Remark \ref{relation with $D(M,N)$}
that non-vanishing $\HV(N)$ or $\SV(N)$
contains interesting information about the topology of the 3-manifold $N$.
However, such information seems difficult to characterize.
For example, the vanishing or non-vanishing of $\SV(N)$
implies nothing about the behavior of ${\HV}(N)$, \cite[Section 4 and 5]{BG1};
and except for the geometric case
(Theorem \ref{basic property of volume of presentation} (2)),
the geometry of pieces fails
to detect the vanishing or non-vanishing of $\HV(N)$ or $\SV(N)$ either,
\cite[Theorem 1.7]{DLW}.
On the other hand, the existence
of some finite cover of $N$ with non-vanishing representation volume
turns out to be a question more accessible.
An affirmative answer would be practically useful:
It implies the finiteness of the set of mapping degrees as before.
Motivated by that application, 
it has been discovered that any non-geometric graph manifold admits a finite cover of positive Seifert volume,
\cite{DW1,DW2}; a much more general construction
that invokes Chern--Simons theoretic calculations and virtual properties of 3--manifolds
shows that a right geometric piece implies
virtually positive volume of the right geometry \cite[Theorems 1.6]{DLW}:
%

\begin{theorem}\label{non-zero and zero} 
Suppose that $N$  is a closed orientable non-geometric prime 3-manifold.
\begin{enumerate}
\item
	If $N$ contains at least one hyperbolic geometric piece,
        then the hyperbolic volume of some finite cover of $N$ is positive.
\item	
	If $N$ contains at least one Seifert geometric piece,
        then the Seifert volume of some finite cover of $N$ positive.
\end{enumerate}
\end{theorem}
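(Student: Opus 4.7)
The plan for both parts follows the same scheme: starting from a geometric piece $J$ of the required type, I would construct, on a suitable finite cover $\tilde N\to N$, a representation $\tilde\rho\co\pi_1\tilde N\to G$ which agrees with the natural discrete faithful representation on every lift of $J$ and which is as degenerate as possible, in particular volume-zero, on every other piece. The substantive work is entirely the gluing across JSJ tori: one must match the peripheral holonomy from the $J$-side with a compatible representation on each adjacent piece.

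For Part (1), the target is $G=\mobgp$, and the discrete faithful representation $\rho_J\co\pi_1 J\to G$ sends each peripheral $\Z^2$ to a parabolic subgroup, conjugate to the group of translations of $\C$. Extending $\rho_J$ across a JSJ torus $T\subset\partial J$ to an adjacent piece $J'$ therefore reduces to extending the induced homomorphism $\pi_1 T\to\C$ to a homomorphism $\pi_1 J'\to\C$, a problem controlled by the image of the restriction $H^1(J';\C)\to H^1(T;\C)$. The first main step is to pass to a regular finite cover $\tilde N\to N$ in which, for every JSJ torus, every needed peripheral class lies in the image of the corresponding restriction. I would arrange this using peripheral separability and virtual retraction properties of $3$-manifold groups, that is, Agol--Wise together with transfer. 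Assembling the local extensions via van Kampen then produces $\tilde\rho$, and since abelian representations into $\mobgp$ contribute zero volume while representation volume is additive along essential tori, this gives $\HV(\tilde N)\ge\mathrm{vol}_{\mobgp}(\tilde N,\tilde\rho)=\sum_i\Vol(\tilde J_i)>0$.

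For Part (2), I would run the analogous argument with $G=\mathrm{Iso}_e\Sft$ and $J$ a Seifert piece with hyperbolic base orbifold. Seifert-neighbors of $J$ would be extended using their own discrete faithful Seifert representations after matching fiber classes on the shared tori, while hyperbolic-neighbors would be extended by representations factoring through the central $\R$-factor of $G$. The main obstacle here is that the Seifert volume is not an honest integral of a local primitive but a Chern--Simons type secondary invariant, hence not additive along the JSJ graph in any naive sense; one must verify globally that the degenerate contributions cancel and that the total stays bounded below by the Seifert volume of the lifts of $J$. I expect the heart of the proof to consist of (i) expressing $\mathrm{vol}_G(\tilde N,\tilde\rho)$ as the evaluation of a Brooks--Goldman Chern--Simons cocycle on the fundamental class, (ii) decomposing this evaluation along the JSJ graph and showing the non-$J$ pieces contribute zero modulo coboundaries, and (iii) reducing the compatibility of local representations across JSJ tori to a linear system on peripheral data, which becomes soluble after a sufficiently deep virtual cover by the same separability machinery as in Part (1). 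Step (ii), in which the Chern--Simons contribution of the auxiliary pieces must be killed despite its nonlocal nature, is where I expect the main difficulty to lie.
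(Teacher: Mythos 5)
The theorem you are proving is quoted in the paper from \cite[Theorem 1.6]{DLW}; the present paper does not reprove it, but Section 3 recalls the key machinery (the additivity principle, Theorem \ref{additive}; the almost-trivial-representation lemma, Lemma \ref{Almost trivial representation}; class inversions, Definition \ref{classInvertible}; and the virtual extension theorem, Theorem \ref{virtualExtensionRep}). Your proposal takes a genuinely different route, and the difference is where the trouble lies.

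The central problem with your Part (1) is that you start from the discrete faithful representation $\rho_J$ of the \emph{cusped} hyperbolic piece $J$. Its restriction to each boundary torus $T$ is an injective parabolic representation $\pi_1 T\cong\Z^2\hookrightarrow\C$, so $\rho_J$ has trivial kernel on $\pi_1T$. Two things go wrong as a result. First, the additivity principle (Theorem \ref{additive}) is stated for representations that vanish on a chosen slope of each JSJ torus; a representation with injective boundary holonomy does not meet that hypothesis, so you cannot simply cite additivity and the vanishing of the contribution from abelian pieces. Second, and more fundamentally, extending the rank-two parabolic peripheral datum on $T$ into an adjacent piece $J'$ as a homomorphism $\pi_1J'\to\C$ asks for the restriction map $H^1(J';\C)\to H^1(T;\C)$ to hit an essentially arbitrary class, but the image of $H^1(J';\C)$ in $H^1(\partial J';\C)$ is a Lagrangian (half-dimensional) subspace by Poincar\'e--Lefschetz duality, so on a single boundary torus it is generically a one-dimensional line. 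Passing to finite covers changes which line you get but not the dimension count, so separability and virtual retraction alone will not make the required full-rank extension appear. Your Part (2) has the same difficulty amplified: fiber classes of adjacent Seifert pieces never agree on a JSJ torus (that is exactly what makes the torus a JSJ torus), so ``matching fiber classes on the shared tori'' is impossible as stated, and for hyperbolic neighbors the Lagrangian obstruction recurs.

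The paper (following \cite{DLW}) sidesteps all of this by a Dehn filling trick. One chooses slopes $\alpha$ on $\partial J$ and a representation $\rho_0\co\pi_1(J)\to G$ that \emph{factors through} $\pi_1(J(\alpha))$; then $\rho_0$ kills each filling slope, so its image on each boundary torus is cyclic (possibly trivial) and has nontrivial kernel. This is exactly the hypothesis needed both for the additivity principle and for Theorem \ref{virtualExtensionRep}, which (using Przytycki--Wise separability, Wise's special cube complex technology, and Rubinstein--Wang) produces a finite cover $\tilde M$ and an extension $\tilde\rho$ such that $\tilde\rho$ agrees with pullbacks of $\rho_0$ up to class inversion on certain lifts of $J$ and has cyclic image on every other piece. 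Lemma \ref{Almost trivial representation} then kills the contribution of the other pieces and positivity follows. In other words, the paper does not try to extend a rank-two peripheral datum across JSJ tori; it first collapses the peripheral datum to something cyclic, and only then extends. Introducing that reduction is the missing ingredient in your outline, and without it steps ``assemble via van Kampen'' and ``decompose the Chern--Simons evaluation along the JSJ graph'' do not go through.
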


Despite the seeming parallelism so far, the hyperbolic volume and
the Seifert volume behave drastically differently with respect to finite covers.
In this paper,
we support this point by investigating two problems proposed in \cite[Section 8]{DLW}:

\begin{problem}\label{control}
	Estimate the growth of virtual hyperbolic volume and virtual Seifert volume.
\end{problem}

\begin{problem}\label{virt-pos-seif}
	Is the Seifert volume of a closed prime $3$-manifold
	virtually positive if it has positive simplicial volume?
\end{problem}

The main results of this paper address Problem \ref{virt-pos-seif} affirmatively
(Theorem \ref{virt-positive})
and Problem \ref{control} partially for 3-manifolds of positive simplicial volume
(Theorem \ref{main-unbounded} and Remark \ref{remark-unbounded}),
showing that the growth of virtual Seifert volume is super-linear while the growth
of virtual hyperbolic volume is linear.
On Problem \ref{virt-pos-seif}, the case of closed hyperbolic 3-manifolds
is already known as a direct consequence of the much stronger virtual domination theorem
\cite{Sun} (quoted as Theorem \ref{virt-dom} below);
so essentially it remains to treat the case of non-geometric 3-manifolds
(with only hyperbolic pieces).
On Problem \ref{control}, it is easy to
observe that the growth of virtual Seifert volume for a closed Seifert geometric 3-manifold
is linear, indeed, in a constant rate equal to its Seifert volume.
Comparing with our result,
we are left with the impression that the growth of virtual hyperbolic volume
might be largely governed by the simplicial volume, and the growth of virtual Seifert volume
appears to be more sensitive to the geometric decomposition.

The main results of this paper are stated as the following Theorems \ref{virt-positive} and \ref{main-unbounded}:

\begin{theorem}\label{virt-positive}
  If $M$ is a closed orientable 3-manifold with positive simplicial volume, then there is a finite cover $\tilde{M}$ of $M$
  with positive Seifert volume.
\end{theorem}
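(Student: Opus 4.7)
The plan is to split $M$ according to its geometric decomposition. Recall from Soma's theorem that, for a closed prime orientable $3$-manifold, positive simplicial volume is equivalent to the existence of at least one hyperbolic JSJ piece; so $M$ falls into one of three (non-exclusive) cases: (i) $M$ is closed hyperbolic; (ii) $M$ is non-geometric and contains a Seifert JSJ piece; (iii) $M$ is non-geometric with every JSJ piece hyperbolic.

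Cases (i) and (ii) are essentially immediate from existing machinery. In case (i), I would apply Sun's virtual domination theorem to construct a finite cover $\tilde{M}$ admitting a non-zero degree map onto a fixed closed Seifert $3$-manifold $N$ of positive Seifert volume, for instance the unit tangent bundle of a closed hyperbolic surface, which carries the $\widetilde{\mathrm{SL}_2(\mathbb{R})}$ geometry and therefore satisfies $\SV(N)>0$ by Theorem \ref{basic property of volume of presentation}(2). The domination property in Theorem \ref{basic property of volume of presentation}(4) then yields $\SV(\tilde{M}) \ge |\deg|\cdot\SV(N)>0$. In case (ii), the conclusion is exactly Theorem \ref{non-zero and zero}(2).

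The substance of the proof therefore lies in case (iii). Here I would localize attention on a single hyperbolic JSJ piece $P\subset M$ and aim to produce a representation $\rho\co\pi_1(\tilde{M})\to \mathbb{R}\times_{\mathbb{Z}}\widetilde{\mathrm{SL}_2(\mathbb{R})}$ on a suitable finite cover $\tilde{M}$ whose total volume receives a strictly positive contribution from (a lift of) $P$. The local input would come from a cusped version of virtual domination: Dehn-fill the cusps of $P$ deeply to obtain a closed hyperbolic $\hat{P}$, apply Sun's theorem to get a finite cover of $\hat{P}$ dominating a closed Seifert target of positive Seifert volume, and pull back along the Dehn-filling inclusion to obtain a finite cover $\tilde{P}$ of $P$ equipped with a non-zero degree map onto a Seifert piece with boundary. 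The global input would come from the Chern--Simons / virtual representation-volume technology developed in \cite{DLW}, which converts such piecewise topological data into a bona fide representation with computable volume.

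The principal obstacle will be the gluing step across the JSJ tori of the cover $\tilde{M}$. The local $\widetilde{\mathrm{SL}_2(\mathbb{R})}$-representations constructed on different pieces must be compatible on their common boundary tori, and their boundary behavior must be trivial enough on the tori that do not come from $\tilde{P}$ so as not to cancel the contribution of $\tilde{P}$. My plan is to pass to a sufficiently deep regular cover of $M$, exploiting residual finiteness and LERF-type separability of $\pi_1(M)$, in order to normalize peripheral slopes and isolate the $\tilde{P}$-contribution. Once the matching is achieved, the strict positivity of the Seifert volume of $\rho$ on the $\tilde{P}$-part, combined with the additivity of representation volume along the JSJ decomposition, should give $\SV(\tilde{M})>0$ and complete the argument.
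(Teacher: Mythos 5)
Your overall route is the same as the paper's: reduce to the mixed case, Dehn-fill a hyperbolic JSJ piece $P$ deeply, feed the closed filling into Sun's virtual domination theorem to get a cover carrying a $\widetilde{\mathrm{SL}_2(\R)}$-representation of positive volume, and then use the DLW machinery to propagate this local data to a virtual representation of the whole manifold whose volume is computed by the additivity principle. However, two points in your sketch would not survive an attempt to carry them out, and both concern precisely the ``gluing step'' you flag as the principal obstacle.

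First, what gets propagated is not ``a non-zero degree map onto a Seifert piece with boundary'' but a representation: from Sun's cover $Q\to\widehat{P}$ dominating a closed Seifert target $N$ one gets $\rho\colon\pi_1(Q)\to\mathrm{Iso}_e\Sft$ by composing with a discrete faithful representation of $\pi_1(N)$, and one restricts $\rho$ to the preimage $J'$ of $P$ inside $Q$. The cover $\tilde M$ is then produced so that every lift of $P$ factors through $J'$ (this is \cite[Prop.~4.2]{DLW}), and the extension over $\pi_1(\tilde M)$ is the content of Theorem~\ref{virtualExtensionRep}, not a soft consequence of LERF and deep regular covers. Second, and more seriously, Theorem~\ref{virtualExtensionRep} has a hypothesis your plan never addresses: the peripheral images $\rho(\pi_1(T))$ must form a \emph{class invertible} collection of abelian subgroups of the target (Definition~\ref{classInvertible}). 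In \cite{DLW} this was only verified for subgroups of the center $\R$ of $\mathrm{Iso}_e\Sft$, which suffices there because the representations were hand-built; here $\rho$ comes from a black-box domination map, so $\rho(\pi_1(T))$ is merely some cyclic subgroup, not a central one. The paper must therefore prove a genuinely new lemma (Lemma~\ref{class inversion}) showing $\mathrm{Iso}_e\Sft$ is class invertible with respect to \emph{all} its cyclic subgroups, and moreover that the class inversion is realized by conjugation by an orientation-preserving isometry of $\Sft$. The second clause is exactly what prevents the cancellation you worry about: the output of Theorem~\ref{virtualExtensionRep} pins down the restriction of $\tilde\rho$ to each elevation of $P$ only up to a class inversion, and the orientation-preservation guarantees all these contributions have the same sign, so they sum to a strictly positive multiple of $\mathrm{vol}_G(J'(\alpha'),\hat\rho)$. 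Without Lemma~\ref{class inversion}, the hypotheses of the extension theorem are unverified and the non-cancellation claim is unjustified, so this is the concrete gap your appeal to residual finiteness and separability does not fill.
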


Combining with results of \cite{DW2,DLW,DSW}, we infer immediately the following characterization:

\begin{corollary}
 Suppose that $N$ is a closed orientable 3-manifold. Then the following three statements are equivalent.
\begin{enumerate}
	\item The set of mapping degrees $D(M,N)$ is finite for every closed orientable 3-manifold $M$.
	\item The Seifert volume of some finite cover of $N$ is positive.
	\item At least one prime factor of $N$ is Seifert geometric, or hyperbolic, or non-geometric.
\end{enumerate}
\end{corollary}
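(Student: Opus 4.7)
The plan is to verify the three statements as a cycle $(3)\Rightarrow(2)\Rightarrow(1)\Rightarrow(3)$, combining Theorem \ref{virt-positive}---the main theorem of this paper---with the cited results \cite{DW2,DLW,DSW} and with the formal properties collected in Theorem \ref{basic property of volume of presentation}.

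The substantive content lies in $(3)\Rightarrow(2)$, which I would establish by cases on the type of the distinguished prime factor $P$ of $N$. If $P$ is Seifert geometric, then $\SV(P)>0$ by Theorem \ref{basic property of volume of presentation} (2), and $\SV(N)>0$ by part (3), so $\tilde{N}=N$ suffices. If $P$ is hyperbolic, then $N$ already has positive simplicial volume by additivity of the Gromov norm under connected sum, and Theorem \ref{virt-positive} directly produces the required finite cover. If $P$ is non-geometric, then $P$ contains either a hyperbolic piece (reducing to the preceding case) or only Seifert pieces, in which case Theorem \ref{non-zero and zero} (2) supplies a finite cover $\tilde{P}\to P$ of some degree $d$ with $\SV(\tilde{P})>0$. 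To promote $\tilde{P}$ to a cover of $N$ I would use the free product decomposition $\pi_1(N)\cong\pi_1(P)*\pi_1(P_2)*\cdots*\pi_1(P_s)$ and the retraction $\phi\co\pi_1(N)\twoheadrightarrow\pi_1(P)$ that kills the remaining factors; then $\phi^{-1}(\pi_1\tilde{P})$ determines a degree-$d$ cover $\tilde{N}\to N$ which is topologically $\tilde{P}\,\#\,(d\cdot P_2)\,\#\cdots\#\,(d\cdot P_s)$, and Theorem \ref{basic property of volume of presentation} (3) forces $\SV(\tilde{N})\geq\SV(\tilde{P})>0$.

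For $(2)\Rightarrow(1)$, fix a finite cover $\tilde{N}\to N$ of degree $k$ with $\SV(\tilde{N})>0$ and, given a map $f\co M\to N$, form the pullback $\tilde{M}\to M$ together with the induced map $\tilde{f}\co\tilde{M}\to\tilde{N}$ of degree $|\deg f|$. Theorem \ref{basic property of volume of presentation} (4) yields $\SV(\tilde{M})\geq|\deg f|\cdot\SV(\tilde{N})$, so it suffices to bound $\SV(\tilde{M})$ independently of $f$. Since $\pi_1(M)$ is finitely generated, it admits only finitely many subgroups of index $k$, so $\tilde{M}$ ranges over a finite list determined by $M$ and $k$ alone; the maximum of $\SV$ over this list bounds $|\deg f|$. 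This indirect argument is required precisely because, as Remark \ref{relation with $D(M,N)$} records, $\SV$ fails the covering property.

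For the contrapositive of $(1)\Rightarrow(3)$, suppose that no prime factor of $N$ is hyperbolic, Seifert geometric, or non-geometric; then each prime factor carries one of the remaining Thurston geometries, and the constructions recorded in \cite{DSW} furnish closed orientable $3$-manifolds $M$ with $|D(M,N)|=\infty$ in each such case, contradicting (1). The main obstacle in the whole program is the propagation step in $(3)\Rightarrow(2)$, where one must ensure that a finite cover of the good prime factor embeds into a genuinely connected finite cover of $N$ while preserving positivity of $\SV$; the Kurosh-style decomposition above, combined with the additivity of $\SV$ under connected sum, resolves it cleanly.
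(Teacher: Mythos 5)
Your proof is correct and follows essentially the same route the paper intends: combining Theorem \ref{virt-positive} with Theorem \ref{non-zero and zero}, the additivity and domination properties of $\SV$ from Theorem \ref{basic property of volume of presentation}, and the finiteness-of-degree-set results from \cite{DSW}, with the connected-sum propagation made explicit. One small imprecision in $(2)\Rightarrow(1)$: a connected component $\tilde{M}$ of the pullback need not have $\deg\tilde{f}=|\deg f|$; the correct relation is $[\tilde{M}:M]\cdot|\deg f|=k\cdot|\deg\tilde{f}|$, so $|\deg\tilde f|$ may be smaller, but one still gets $|\deg f|\leq k\cdot\SV(\tilde{M})/\SV(\tilde{N})$, and letting $\tilde{M}$ range over the finitely many connected covers of $M$ of degree at most $k$ yields the required finite bound.
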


\begin{theorem}\label{main-unbounded}
For any closed oriented $3$-manifold $M$ with non-vanishing simplicial volume,
the set of values
$$\left\{\frac{\SV(M')}{[M':M]}\,:\, M'\textrm{ any finite cover of }M\right\}$$
has no upper bound in $[0,+\infty)$.
\end{theorem}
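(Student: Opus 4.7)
The strategy is to amplify the construction behind Theorem \ref{virt-positive}: rather than producing a single finite cover of $M$ with positive Seifert volume, I would produce a sequence of finite covers $\tilde M_k$ whose Seifert volumes grow strictly faster than the covering indices. As a first reduction, since $[M':M] = [M':M_0]\,[M_0:M]$ along any tower $M' \to M_0 \to M$, unboundedness of the ratio $\SV(M')/[M':M]$ over covers $M'$ of $M$ is equivalent to unboundedness over covers of any single fixed finite cover $M_0$. Hence, by Theorem \ref{virt-positive}, we may replace $M$ by a finite cover and assume from the start that $\SV(M) > 0$, while keeping $\|M\|>0$ and at least one hyperbolic JSJ piece of $M$.

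Next, I would combine virtual domination with a family of Seifert-geometric targets of increasing Seifert volume. Fix a closed Seifert-geometric manifold $N_0$ with $\SV(N_0)=v_0>0$, for instance the unit tangent bundle of a closed hyperbolic surface. For each positive integer $k$, take the $k$-fold cyclic cover $N_k\to N_0$ along the Seifert fiber direction; since $N_k$ remains Seifert-geometric, the covering formula valid in the geometric case combined with Theorem \ref{basic property of volume of presentation}(2) gives $\SV(N_k)=kv_0$. Using the hyperbolic piece of $M$ and Sun's virtual domination theorem (Theorem \ref{virt-dom}), combined with the assembly techniques underlying Theorem \ref{virt-positive} in the non-geometric case, one produces for each $k$ a finite cover $\tilde M_k\to M$ and a nonzero-degree map $f_k\colon\tilde M_k\to N_k$. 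The Seifert-volume domination inequality of Theorem \ref{basic property of volume of presentation}(4) then yields $\SV(\tilde M_k)\geq |\deg f_k|\cdot kv_0$, so the ratio of interest is bounded below by $v_0\cdot |\deg f_k|\cdot k/[\tilde M_k:M]$.

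The essential new step --- and the main difficulty --- is to arrange the construction so that the quantity $|\deg f_k|\cdot k/[\tilde M_k:M]$ tends to infinity. A naive pullback of a fixed virtual domination $M_0'\to N_0$ against the cyclic tower $N_k\to N_0$ gives $[\tilde M_k:M_0']=k$ and constant $|\deg f_k|$, yielding only the linear bound $\SV(\tilde M_k)/[\tilde M_k:M]=O(1)$ --- the same behaviour exhibited by the geometric Seifert case. Super-linearity therefore demands a genuinely new ingredient. I expect the proof to follow one of two routes: either (i) enhance the virtual domination on the hyperbolic piece, exploiting LERF/separability and retract features of cusped hyperbolic $3$-manifold groups so that a common bounded-index cover of $M$ dominates a family of $N_k$'s of unbounded Seifert volume; or (ii) construct, for each $k$, a representation $\pi_1(\tilde M_k)\to\mathrm{Iso}_e\widetilde{\mathrm{SL}_2(\mathbb{R})}$ whose volume strictly exceeds the bound coming from any map to a Seifert-geometric target, by composing the discrete faithful representation of the hyperbolic piece with Chern--Simons-theoretic deformations supplied by abundant extra cohomology in deep covers. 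The second route, in line with the Chern--Simons theoretic philosophy mentioned in the introduction and with the sensitivity of $\SV$ to the geometric decomposition, seems the more likely crux; exhibiting and estimating this family of high-volume representations is the principal obstacle.
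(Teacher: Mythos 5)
You correctly identify the crux: the naive scheme of pulling back a fixed domination against a tower of Seifert targets $N_k$ with $\SV(N_k)=k\,v_0$ only yields linear growth, and something new is needed. But neither of your two speculated routes is the one the paper takes, and the actual mechanism is a genuinely different idea that your proposal does not contain.

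The missing ingredient is the \emph{efficient} virtual domination theorem (Theorem~\ref{efficient-virt-domi}), proved via Gaifullin's URC $3$-manifolds: for any closed hyperbolic $M$ there is a constant $c(M)$ such that for any target $N$ and any $\epsilon>0$ one can find a finite cover $M'\to M$ and a nonzero-degree map $f\colon M'\to N$ with $\|M'\|\le c(M)\cdot|\deg f|\cdot(\|N\|+\epsilon)$. The construction starts from an efficient realization of a multiple of $[N]$ by a pseudo-manifold $Z$ with few simplices (Lemma~\ref{simplicial}, essentially the definition of the Gromov norm with rational approximation), then Gaifullin's quantitative theorem (Theorem~\ref{Ga-quantitative}) produces a cover of his URC manifold $M_{\Pi^3}$ with permutahedron count proportional to $|\deg f_1|\cdot\#\{\text{simplices of }Z\}$, and finally Sun's fixed degree-$2$ domination $\tilde M\to M_{\Pi^3}$ is pulled back. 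The point is then the opposite of your route (i): one does \emph{not} vary the target, one fixes a single Seifert-geometric $N$ with $\SV(N)>0$ but $\|N\|=0$, and one lets $\epsilon\to 0$. Since $\|M'\|=[M':M]\cdot\|M\|$, the inequality forces $|\deg f|/[M':M]\ge\|M\|/(c(M)\epsilon)\to\infty$, whence $\SV(M')/[M':M]\ge|\deg f|\cdot\SV(N)/[M':M]$ is unbounded. So the covering index is made small relative to the mapping degree by efficiency of cycle realization, not by scaling the target or by Chern--Simons deformations (your route (ii) plays no role in the paper). For the mixed case the paper additionally needs a controlled version of the virtual extension machinery (Theorem~\ref{controlledvirtualSeifertRepresentation}, proved via the new notion of CI completions), so that the resulting virtual Seifert representation has volume proportional, with a constant $\alpha_0$ depending only on $M$ and the chosen Dehn filling, to $\mathrm{Vol}(\widehat J_0',\eta)/[\widehat J_0':\widehat J_0]$; your phrase ``assembly techniques underlying Theorem~\ref{virt-positive}'' glosses over this nontrivial quantitative refinement. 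In short, the gap is a real one: the proposal diagnoses the problem but lacks both the efficient domination via URC manifolds (the hyperbolic case) and the controlled virtual extension via CI completions (the mixed case).
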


\begin{remark}\label{remark-unbounded}
In contrast, it is evident by \cite[Theorem B]{Re-rationality} and Theorem \ref{basic property of volume of presentation}
that the set of values
$$\left\{\frac{\HV(M')}{[M':M]}\,:\, M'\textrm{ any finite cover of }M\right\}$$
is contained by the interval $[\,0,\,v_3\|M\|\,]$ where the upper bound is the simplicial volume of $M$.
\end{remark}

Theorem \ref{main-unbounded} is significantly stronger than Theorem \ref{virt-positive}.
Let us take a closer look at the geometric case to illustrate
their difference in the proof.
As mentioned, when $M$ is assumed to be geometric, hence hyperbolic,
Theorem \ref{virt-positive} is implied by the following result
of \cite{Sun}, by taking $N$ to be a target with positive Seifert volume:

\begin{theorem}\label{virt-dom}
	For any closed oriented hyperbolic 3-manifold $M$, and any closed oriented 3-manifold $N$,
  $M$ admits a finite cover $\tilde{M}$,
	such that there exists a $\pi_1$-surjective degree-$2$ map $f:\tilde{M}\rightarrow N$.
\end{theorem}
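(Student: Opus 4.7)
The plan is to construct a ``virtual model'' of $N$ inside some finite cover $\tilde{M}$ of $M$, guided by the JSJ decomposition of $N$. First, decompose $N$ along its JSJ tori into geometric pieces $P_1,\dots,P_k$. For each piece $P_i$, the aim is to produce an immersion from a compact aspherical $3$-manifold $(Q_i,\partial Q_i)\looparrowright M$ together with a $\pi_1$-surjective degree $2$ map $Q_i\to P_i$ carrying $\partial Q_i$ to $\partial P_i$. The appearance of degree $2$ rather than $1$ is natural because one assembles the piecewise data so that each boundary torus of $P_i$ is covered by a \emph{pair} of matched boundary tori of $Q_i$, and this pairing forces degree $2$ on each piece.

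The construction of each $Q_i$ would rely on the Kahn--Markovic machinery of nearly totally geodesic immersed surfaces assembled from good pants in the closed hyperbolic $3$-manifold $M$. For a hyperbolic JSJ piece $P_i$, I would build $Q_i$ as a thickening (an $I$-bundle, or a union of $I$-bundles glued along annuli) of such a surface, with boundary tori in $\partial Q_i$ arising from paired annular neighborhoods of closed geodesics representing the JSJ slopes of $P_i$. For a Seifert JSJ piece, $M$ contains no Seifert geometric submanifolds, so $Q_i$ must instead be modeled on a thickened graph whose central loops are closed geodesics of $M$ chosen to represent the regular fiber and base-orbifold data of $P_i$ through a branched cover. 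Then Agol's theorem that $\pi_1(M)$ is virtually compact special, combined with Haglund--Wise subgroup separability, permits passage to a finite cover $\tilde{M}$ in which all the immersed $Q_i$ become disjointly embedded.

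To finish, one glues the boundary tori of the embedded $Q_i$'s inside $\tilde{M}$ according to the JSJ pattern of $N$, producing a compact submanifold of $\tilde{M}$ that dominates $N$ with degree $2$; the map is extended to the rest of $\tilde{M}$ either by arranging during the construction that the $Q_i$'s already exhaust $\tilde{M}$, or by a crushing argument on the complement. The main obstacle lies in this boundary-matching step. Kahn--Markovic-type constructions only produce surfaces whose boundary slopes match \emph{asymptotically}, and one must use exponential mixing of the frame flow on $M$ to upgrade this to \emph{exact} compatibility across adjacent pieces, while simultaneously preserving the pairing structure that yields degree $2$ and the $\pi_1$-surjectivity. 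Adapting this ``connection principle'' (developed by Kahn--Markovic and extended by Liu--Markovic) to the mixed hyperbolic/Seifert JSJ setting of $N$ is the technical heart of the proof.
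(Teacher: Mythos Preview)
This theorem is not proved in the present paper; it is quoted from \cite{Sun} and used as a black box. So there is no ``paper's own proof'' to compare against here, only the original argument of Sun.

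That said, your sketch diverges substantially from Sun's actual strategy, and it has genuine gaps. Sun does \emph{not} organize the construction around the JSJ decomposition of the target $N$. Instead, he works with a triangulation (more precisely, a CW spine) of $N$: each $2$-cell is modeled by an immersed nearly geodesic surface in $M$ assembled from good pants via Kahn--Markovic, and the ``good pants homology'' of Liu--Markovic \cite{LM} is what guarantees that the boundary curves of these pieces can be made to agree \emph{exactly} so that the whole $2$-complex immerses $\pi_1$-injectively in $M$. Virtual specialness \cite{Ag,Wi} then promotes this to an embedding in a finite cover $\tilde{M}$, and the degree-$2$ map is obtained by mapping a regular neighborhood of the embedded $2$-complex onto $N$ and collapsing the complement. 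The degree $2$ comes from the fact that the regular neighborhood is an $I$-bundle over the $2$-complex, not from any JSJ pairing.

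Your JSJ-based plan runs into trouble at several points. First, $N$ is an arbitrary closed oriented $3$-manifold: it need not be irreducible, and even if prime it may be spherical or $S^2\times S^1$, so ``decompose $N$ along its JSJ tori into geometric pieces'' is not always available. Second, and more seriously, even granting disjoint embeddings of your $Q_i$ in some $\tilde{M}$ via separability, nothing forces their boundary tori to be \emph{adjacent} in $\tilde{M}$ in the pattern dictated by the JSJ graph of $N$; separability only gives disjointness, not a prescribed incidence. Third, your model for Seifert pieces (``a thickened graph whose central loops are closed geodesics'') is a handlebody, which does not admit a $\pi_1$-surjective degree-$2$ map to a Seifert piece with boundary in any evident way. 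The triangulation approach in \cite{Sun} sidesteps all of these issues by never needing to reproduce the geometric pieces of $N$ inside $M$.
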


Even though Theorem \ref{virt-dom} is a powerful construction,
employing deep theories including \cite{KM,LM} and \cite{Ag,Wi}
on building and separating certain quasiconvex subgroups
in closed hyperbolic 3-manifold groups,
the construction provides no control on the degree $[\tilde{M}:M]$.
So Theorem \ref{main-unbounded} stays beyond the reach of Theorem \ref{virt-dom}.
Armed with a more recent result of A.~Gaifullin \cite{Ga},
we prove the following Theorem \ref{efficient-virt-domi} based on Theorem \ref{virt-dom}.
The improved construction is supplied with a desired efficient control of the mapping degree:

\begin{theorem}\label{efficient-virt-domi}
	For any closed oriented hyperbolic $3$--manifold $M$, there exists a positive constant $c(M)$ such that the following statement holds. For any closed
	oriented $3$--manifold $N$ and any $\epsilon>0$, there exists a finite cover $M'$ of $M$ which admits a non--zero degree map $f:M'\rightarrow N$, such
	that $$\|M'\|\leq c(M)\cdot|\mathrm{deg}(f)|\cdot(\|N\|+\epsilon).$$
\end{theorem}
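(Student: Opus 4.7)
The plan is to combine Theorem \ref{virt-dom} with a recent ``efficient universal realization'' result of A.~Gaifullin \cite{Ga}. Informally, Gaifullin's theorem produces a \emph{fixed} closed oriented $3$--manifold $M_0$ and a universal constant $C>0$ such that, for any closed oriented $3$--manifold $N$ and any $\epsilon>0$, there is a finite cover $M_0'\to M_0$ equipped with a non--zero degree map $h\co M_0'\to N$ satisfying
\[
[M_0':M_0]\,\le\, C\cdot|\mathrm{deg}(h)|\cdot(\|N\|+\epsilon).
\]
Because $M_0$ is fixed, this estimate does not a priori apply to the given hyperbolic $M$; the role of Theorem \ref{virt-dom} will be to transport this efficient domination from $M_0$ back to $M$ via a pullback construction.

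Concretely, I would first apply Theorem \ref{virt-dom} with source $M$ and target the universal manifold $M_0$: this supplies a finite cover $\tilde M\to M$ together with a $\pi_1$--surjective degree--$2$ map $g\co \tilde M\to M_0$. Crucially, $\tilde M$ and $g$ depend only on $M$ (and on the fixed $M_0$), not on $N$ or $\epsilon$. Given $N$ and $\epsilon$, I would invoke Gaifullin's theorem to obtain $M_0'\to M_0$ and $h\co M_0'\to N$ as above, and form the pullback
\[
M'\,=\,\tilde M\,\times_{M_0}\,M_0'.
\]
The $\pi_1$--surjectivity of $g$ ensures that $M'$ is connected; the projection $M'\to \tilde M$ is then a covering of degree $[M_0':M_0]$, while the other projection $p\co M'\to M_0'$ has degree $2$. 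Setting $f=h\circ p\co M'\to N$ produces a finite cover $M'$ of $M$ and a non--zero degree map of degree $|\mathrm{deg}(f)|=2\,|\mathrm{deg}(h)|$.

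For the volume bound I would use multiplicativity of the Gromov norm under finite covers:
\[
\|M'\|\,=\,[M_0':M_0]\cdot[\tilde M:M]\cdot\|M\|\,\le\,C\cdot|\mathrm{deg}(h)|\cdot(\|N\|+\epsilon)\cdot[\tilde M:M]\cdot\|M\|.
\]
Substituting $|\mathrm{deg}(h)|=|\mathrm{deg}(f)|/2$ yields the desired estimate with
\[
c(M)\,=\,\tfrac{1}{2}\,C\cdot[\tilde M:M]\cdot\|M\|,
\]
which manifestly depends only on $M$. The main substantive ingredient is the quantitative form of Gaifullin's theorem; once that is in hand, the rest is a routine diagram chase combined with one use of covering--multiplicativity of $\|\cdot\|$. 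Accordingly, the chief obstacle I anticipate is not the pullback step but the careful extraction of Gaifullin's estimate in precisely the sharp form $\|N\|+\epsilon$, and the verification that the universal manifold $M_0$ supplied by \cite{Ga} can play the role of target for Theorem \ref{virt-dom} here.
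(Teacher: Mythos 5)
Your proposal follows essentially the same route as the paper: take Gaifullin's URC $3$-manifold $M_{\Pi^3}$ as the intermediate target, apply Theorem \ref{virt-dom} to obtain a degree-$2$ virtual domination $\tilde{M}\to M_{\Pi^3}$ that is fixed once and for all (independent of $N$ and $\epsilon$), pull back the efficient cover of $M_{\Pi^3}$, and bound $\|M'\|$ by covering-multiplicativity of the Gromov norm. The one step you leave as a black box --- the quantitative Gaifullin estimate in the sharp form $\|N\|+\epsilon$ --- is exactly where the paper's work lies, namely combining an approximation lemma (Lemma \ref{simplicial}) that realizes a multiple of $[N]$ by a strongly connected orientable pseudo $3$-manifold with nearly $|\mathrm{deg}|\cdot\|N\|$ tetrahedra together with Gaifullin's exact permutahedron count (Theorem \ref{Ga-quantitative}), and your worry about whether $M_{\Pi^3}$ can serve as a target for Theorem \ref{virt-dom} is unfounded since that theorem applies to arbitrary closed oriented $3$-manifold targets.
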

%

To prove Theorems \ref{virt-positive} and \ref{main-unbounded} in the non-geometric case,
it is tempting to extend Theorems \ref{virt-dom} and \ref{efficient-virt-domi}
to mixed 3-manifolds, but we do not have available tools for that project.
Instead, we follow the framework of \cite{DLW}
and integrate the virtual domination theorems.
The reaction between Theorem \ref{virt-dom}
and the fundamental construction for Theorem \ref{non-zero and zero}
is fairly direct and illustrating,
so we present it and prove Theorem \ref{virt-positive} as a warm-up.
The proof of Theorem \ref{main-unbounded} is relatively more sophisticated,
not only because of Theorem \ref{efficient-virt-domi},
but requiring some details of \cite{DLW}.
In particular, we introduce an auxiliary notion
called \emph{CI completions}
to formalize some useful idea underlying the construction of \cite{DLW},
(see Subsection \ref{Subsec-CIcompletion}).

All the arguments are based on explicitly stated results,
and the exposition is kept otherwise self-contained.
The organization of this paper is as the following.
The proofs of Theorem \ref{virt-positive}, and Theorem \ref{efficient-virt-domi}, and Theorem \ref{main-unbounded}
occupy Sections \ref{Sec-Theorem-virt-positive}, \ref{Sec-Theorem-efficient-virt-domi}, and \ref{Sec-Theorem-main-unbounded},
respectively.
Section \ref{Sec-preliminaries} includes preliminaries on 3-manifold topology and representation volume.
Section \ref{Sec-onCoveringInvariants} contains some further questions and observations.

\subsection*{Acknowledgement}
We are grateful to Ian Agol for valuable conversations.
The second author is supported by the Recruitment Program of Global Youth Experts
of China. The third author is partially supported by Grant No.~DMS-1510383 of the National
Science Foundation of the United States.
The last author is partially supported by Grant No.~11371034
of the National Natural Science Foundation of China.

\section{Preliminaries}\label{Sec-preliminaries}
	In this section, we review the geometric decomposition of $3$-manifolds and the theory of representation volumes.

	\subsection{Geometry and topology of 3-manifolds after Thurston}
		Let $N$ be a connected compact prime orientable 3-manifold with toral or empty boundary.
		As a consequence of the geometrization of $3$-manifolds \cite{Th1,Th2}
		achieved by G.~Perelman and W.~Thurston,
		exactly one of the following case holds:
		\begin{itemize}
			\item Either $N$ is geometric, supporting one of the following eight geometries: ${\Hi}^3$,
			$\widetilde{{\rm SL}_2({\R})}$, ${\Hi}^2\times{\R}$, ${\rm Sol}$, ${\rm Nil}$, ${\R}^3$, ${\S}^3$ and
			${\S}^2\times {\R}$ (where ${\Hi}^n$, ${\R}^n$ and ${\S}^n$ are the $n$-dimensional
			hyperbolic space, Euclidean space, and spherical space respectively);
			\item or $N$ has a canonical nontrivial geometric decomposition.
			In other words, there is a nonempty minimal union $\mathcal{T}_N\subset N$ of disjoint
			essential tori and Klein bottles in $N$, unique up to isotopy, such
			that each component of $N\setminus\mathcal{T}_N$ is either Seifert fibered or atoroidal.
			In the Seifert fibered case, the piece supports  the ${\Hi}^2\times{\R}$ geometry and
			the $\widetilde{{\rm SL}_2({\R})}$ geometry. In the atoroidal case, the piece supports the ${\Hi}^3$ geometry.
		\end{itemize}
		When $N$ has nontrivial geometric decomposition, we call the components of $N\setminus\mathcal{T}_N$
		the \emph{geometric pieces} of $N$, or more specifically,
		\emph{Seifert pieces} or \emph{hyperbolic pieces} according to their geometry.
		
        Traditionally, there is another decomposition introduced
		by Jaco--Shalen \cite{JS} and Johannson \cite{Joh}, known as the \emph{JSJ decomposition}.
		When $N$ contains no essential Klein bottles and has a nontrivial geometric decomposition,
		the JSJ decomposition of $N$ coincides with its geometric decomposition,
		so the cutting tori and the geometric pieces are also the \emph{JSJ tori} and
		the \emph{JSJ pieces}, respectively.
		Possibly after passing to a double cover of $N$, we may assume that $N$ contains
		no essential Klein bottle.
		
		A hyperbolic piece $J$ can be realized
		as a complete hyperbolic $3$-manifold of finite volume,
		unique up to isometry (by Mostow Rigidity).
		Let $J$ be a compact, orientable $3$-manifold whose boundary
		consists of tori $T_1,\ldots,T_p$ and whose interior admits a complete
		hyperbolic metric. Identify $J$ with the complement of $p$ disjoint cusps in the corresponding
        hyperbolic manifold, then $\partial J$ has a Euclidean metric induced
		from the hyperbolic structure, and each closed Euclidean geodesic in
		$\partial J$ has the induced length.
		The Hyperbolic Dehn Filling Theorem of Thurston \cite[Theorem 5.8.2]{Th1}
		can be stated in the following form.
		
		\begin{theorem}\label{surgered}
			There is a positive constant $C$ such that
 			the closed 3-manifold  $J(\zeta_1,\ldots,\zeta_n)$ obtained by
 			Dehn filling each $T_i$ along a slope $\zeta_i\subset T_i$ admits a complete hyperbolic
			structure if each $\zeta_i$ has length greater than $C$.
			Moreover, with suitably chosen base points, $J(\zeta_1,\ldots,\zeta_n)$
			converges to the corresponding cusped hyperbolic $3$-manifold in the Gromov--Hausdorff sense as the minimal length
			of $\zeta_i$ tends to infinity.
		\end{theorem}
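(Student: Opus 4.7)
The plan is to invoke Thurston's deformation theory of hyperbolic structures on the cusped manifold $\mathrm{int}(J)$. Fix the complete finite-volume hyperbolic structure on $\mathrm{int}(J)$ and consider the space of nearby (generally incomplete) hyperbolic structures, parametrized up to conjugation by holonomy representations of $\pi_1 J$ into $\mathrm{PSL}_2(\mathbb{C})$ in a neighborhood of the discrete faithful one. Thurston's generalized Dehn surgery parametrization assigns to each cusp $T_i$ a coordinate $u_i \in \mathbb{C}\cup\{\infty\}$, identifying a neighborhood of $(\infty,\ldots,\infty)$ in $(\mathbb{C}\cup\{\infty\})^p$ with a neighborhood of the complete structure in the deformation space; the value $u_i=\infty$ corresponds to the structure being complete at the $i$-th cusp.

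Given a slope $\zeta_i\subset T_i$, write it in the chosen basis of $H_1(T_i;\mathbb{Z})$ as a coprime integer pair $(p_i,q_i)$. The central computation is that the incomplete structure whose parameter satisfies $u_i=(p_i,q_i)$ has a metric completion obtained by attaching a solid torus along $T_i$ whose meridian is precisely $\zeta_i$; the completion is therefore the topological Dehn filling $J(\zeta_1,\ldots,\zeta_n)$ endowed with a complete hyperbolic structure. It remains to ensure that every integer lattice point $(p_i,q_i)$ of interest lies in the neighborhood of $\infty$ on which Thurston's theorem applies. An elementary comparison shows that, up to factors bounded in terms of the shape of the Euclidean cusp cross section, $|u_i|$ is comparable to the Euclidean length of the geodesic representative of $\zeta_i$ on the horospherical torus $T_i$. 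Choosing a uniform constant $C$ large enough then guarantees that whenever every $\zeta_i$ has length greater than $C$, the generalized Dehn surgery parameters fall into the region where Thurston's theorem produces a complete hyperbolic structure on $J(\zeta_1,\ldots,\zeta_n)$.

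For the Gromov--Hausdorff convergence statement, I would select basepoints $x\in J(\zeta_1,\ldots,\zeta_n)$ lying in a fixed compact core $K\subset J$ disjoint from a small horocusp neighborhood, and identify all such basepoints with a common basepoint in $J$. As the minimum slope length tends to infinity, the parameters $u_i$ approach $\infty$ simultaneously, and the hyperbolic structures converge smoothly on every compact subset to the complete structure on $\mathrm{int}(J)$. The short core geodesics of the attached solid tori lie inside Margulis tubes whose distance from $x$ blows up, so they escape every fixed metric ball; pointed Gromov--Hausdorff convergence of $J(\zeta_1,\ldots,\zeta_n)$ to the cusped manifold $J$ follows.

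The main obstacle is setting up the generalized Dehn surgery parametrization itself. This requires a careful analysis of holonomy representations near the discrete faithful one, a rigidity argument establishing that the character variety of $\pi_1 J$ has complex dimension equal to the number of cusps $p$, and the identification of the metric completion of an incomplete structure near the complete one with a topological Dehn filling along integer slopes. Once this framework, originally developed in Chapter 5 of Thurston's notes, is in place, the length-threshold statement and the convergence assertion both reduce to comparing the natural coordinates on the deformation space with the Euclidean geometry of the cusp cross sections.
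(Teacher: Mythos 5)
The paper does not prove this statement: it is quoted verbatim as Thurston's Hyperbolic Dehn Filling Theorem, citing \cite[Theorem 5.8.2]{Th1}, so there is no internal argument to compare against. Your proposal is a correct high-level reconstruction of Thurston's original deformation-theoretic proof: the generalized Dehn surgery coordinates $u_i$ near the complete structure, the identification of the metric completion at an integer lattice point $(p_i,q_i)$ with a topological Dehn filling along $\zeta_i$, the comparison of $|u_i|$ with the Euclidean length of $\zeta_i$ on the horospherical cross section (which is what makes the threshold constant $C$ depend only on the cusp shapes of $J$), and the passage from smooth convergence on compacta plus escape of the Margulis tubes to pointed Gromov--Hausdorff convergence. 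You also identify, accurately and honestly, where the real work lies: establishing the Dehn surgery parametrization itself, including the local rigidity/dimension count for the character variety and the analysis of completions of incomplete structures. As an outline it is sound and it is the canonical route; since the paper only cites the result, citing \cite{Th1} (or, for the quantitative length threshold, the $2\pi$-theorem or $6$-theorem literature) is what the paper itself does, and your sketch fills in what that citation refers to.
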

		
 		A Seifert piece $J$ of a non-geometric prime closed $3$-manifold $N$ supports
 		both the ${\Hi}^2\times{\R}$ geometry and the $\widetilde{{\rm SL}_2({\R})}$ geometry.
 		In this paper, we are more interested in the latter case, so we describe the structure
 		of $\widetilde{{\rm SL}_2(\R)}$ geometric manifolds in the following.

We consider the group ${\rm PSL}(2;{\R})$ as the orientation
preserving isometries of the hyperbolic
  $2$-space ${\Hi}^2=\{z\in{\C}|\  \Im(z)>0\}$ with $i$ as a base point. In this way ${\rm PSL}(2;{\R})$ is a (topologically trivial) circle bundle over ${\Hi}^2$.
  Let $p\co\t{{\rm SL}_2(\R)}\to{\rm PSL}(2;{\R})$ be the universal covering of ${\rm PSL}(2;{\R})$ with the induced metric, then $\t{{\rm SL}_2(\R)}$
  is a  line bundle over ${\Hi}^2$. For any $\alpha\in{\R}$, denote by ${\rm sh}(\alpha)$ the element of $\t{{\rm SL}_2(\R)}$
  whose projection into ${\rm PSL}(2;{\R})$ is given by $\begin{pmatrix}
\cos(2\pi\alpha)&\sin(2\pi\alpha) \\
-\sin(2\pi\alpha)&\cos(2\pi\alpha)
\end{pmatrix}$. Then the set $\{{\rm sh}(n)|\  n\in{\Z}\}$
  is the kernel of $p$, as well as the center of $\t{{\rm SL}_2(\R)}$, acting by integral translation along the fibers of $\t{{\rm SL}_2(\R)}$.
  By extending this ${\Z}$-action on the fibers by the ${\R}$-action, we get the whole identity component of the isometry group of $\t{{\rm SL}_2(\R)}$.
  To summarize, we have the following diagram of central extensions
$$\xymatrix{
\{0\} \ar[r] \ar[d] & {\Z} \ar[r] \ar[d] & \t{{\rm SL}_2(\R)}\ar[r] \ar[d] & {\rm PSL}(2;{\R})\ar[r] \ar[d] & \{1\} \ar[d] \\
\{0\} \ar[r]  & {\R} \ar[r]  & {\rm Iso}_e\t{{\rm SL}_2(\R)}
\ar[r]  & {\rm PSL}(2;{\R})\ar[r]  & \{1\} }.$$ In particular, the
group ${\rm Iso}_e\t{{\rm SL}_2(\R)}$ is generated by $\t{{\rm
SL}_2(\R)}$ and the image of ${\R}$ which intersect with each other in the
image of ${\Z}$. More precisely, we state the following useful lemma which is easy to check.

\begin{lemma}\label{SL}
We have the identification ${\rm Iso}_e\t{{\rm SL}_2(\R)}={\R}\times_{\Z}\t{{\rm SL}_2(\R)}$:
where
$(x,h)\sim({x'},h')$ if and only if there exists an integer
$n\in{\Z}$ such that ${x'}-x=n$ and $h'={\rm sh}(-n)\circ h$.
\end{lemma}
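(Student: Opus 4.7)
The plan is to build an explicit surjective homomorphism $\Phi\co\R\times\widetilde{\mathrm{SL}_2(\R)}\to{\rm Iso}_e\widetilde{\mathrm{SL}_2(\R)}$ and identify its kernel with the diagonal $\Z$ that defines the equivalence relation in the statement. Concretely, one sends $(x,h)$ to the composition $T_x\circ L_h$, where $L_h$ denotes left multiplication by $h\in\widetilde{\mathrm{SL}_2(\R)}$ (which is an isometry because the metric on $\widetilde{\mathrm{SL}_2(\R)}$ is left-invariant) and $T_x$ denotes the fiber translation by $x\in\R$ from the isometry group's $\R$-factor described just before the lemma.

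The first step is to verify that $\Phi$ is a group homomorphism, for which the key point is that $T_x$ and $L_h$ commute. This is immediate for $x\in\Z$, since then $T_x=L_{\mathrm{sh}(x)}$ and $\mathrm{sh}(x)$ is central in $\widetilde{\mathrm{SL}_2(\R)}$; the general statement follows by continuity in $x$ from the commutation on the dense subgroup $\Z$. Hence $\R\times\widetilde{\mathrm{SL}_2(\R)}$ is a genuine direct product and $\Phi$ is a continuous homomorphism.

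The second step is to see that $\Phi$ is surjective, which is a straightforward diagram chase on the lower row of the central extension displayed in the text. Any $g\in{\rm Iso}_e\widetilde{\mathrm{SL}_2(\R)}$ projects to some $\bar g\in\mathrm{PSL}_2(\R)$; lift $\bar g$ to some $h\in\widetilde{\mathrm{SL}_2(\R)}$ via $p$, then $g\circ L_h^{-1}$ lies in the $\R$-kernel of the extension, hence equals $T_x$ for a unique $x\in\R$, giving $g=\Phi(x,h)$.

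Finally, compute $\ker\Phi$. If $T_x\circ L_h=\mathrm{id}$, then projecting to $\mathrm{PSL}_2(\R)$ yields $p(h)=1$, so $h=\mathrm{sh}(m)$ for some $m\in\Z$; then $\Phi(x,h)=T_x\circ T_m=T_{x+m}=\mathrm{id}$ forces $x=-m\in\Z$. So $\ker\Phi=\{(n,\mathrm{sh}(-n))\,:\,n\in\Z\}$, and the quotient by this subgroup is exactly the relation $(x,h)\sim(x',h')\iff x'-x=n\in\Z,\ h'=\mathrm{sh}(-n)\circ h$ in the statement. The main (very mild) subtlety is keeping track of signs and the fact that the $\R$-action restricts to the $\Z$-action by central multiplication with the prescribed sign convention; everything else is a formal verification.
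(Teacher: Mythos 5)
The paper states this lemma without proof (``easy to check''), so there is no official argument to compare against; your plan of exhibiting a surjection $\Phi\co\R\times\widetilde{\mathrm{SL}_2(\R)}\to\mathrm{Iso}_e\widetilde{\mathrm{SL}_2(\R)}$, $(x,h)\mapsto T_x\circ L_h$, and identifying its kernel is the natural one, and the surjectivity and kernel computations you give are correct.

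However, your justification of the commutation $T_x\circ L_h=L_h\circ T_x$ is flawed: you claim ``the general statement follows by continuity in $x$ from the commutation on the dense subgroup $\Z$,'' but $\Z$ is a discrete, not dense, subgroup of $\R$, so continuity gives you nothing beyond the integer points. The commutation you need is in fact already built into the setup: the displayed diagram in the paper presents
\[
0\longrightarrow \R\longrightarrow \mathrm{Iso}_e\widetilde{\mathrm{SL}_2(\R)}\longrightarrow \mathrm{PSL}_2(\R)\longrightarrow 1
\]
as a \emph{central} extension, so the $\R$-subgroup of fiber translations is central in $\mathrm{Iso}_e\widetilde{\mathrm{SL}_2(\R)}$, and in particular each $T_x$ commutes with each $L_h$. (If you want a proof that does not simply quote centrality, one can note that the commutator $[T_x,L_h]$ lies in the kernel $\R$, so $[T_x,L_h]=T_{c(x,h)}$, and a short computation shows $x\mapsto c(x,h)$ is a continuous additive homomorphism $\R\to\R$ vanishing on $\Z$, hence identically zero.) With that one step repaired, the rest of your argument — surjectivity via a diagram chase in the central extension, and the identification $\ker\Phi=\{(n,\mathrm{sh}(-n)):n\in\Z\}$ yielding exactly the stated equivalence relation — is sound.
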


From \cite{BG2} we know that a closed orientable $3$-manifold $J$ supports the $\widetilde {{\rm
SL}_2(\R)}$ geometry, i.e.~there is a discrete and faithful representation
$\psi: \pi_1J\to {\rm Iso} \t{{\rm SL}_2(\R)}$, if and only if
$J$ is a Seifert fibered space with non-zero Euler number $e(J)$ and the base orbifold $\chi_{O(J)}$ has
negative Euler characteristic.

\subsection{Representation volumes of closed manifolds}
In this subsection, we recall the definition of volume of representations.
There are a few equivalent definitions, and we will only state one of them.

Given a semi-simple, connected Lie  group $G$ and a closed oriented
manifold $M^n$ of the same dimension as  the contractible space
$X^n=G/K$, where $K$ is a maximal compact subgroup of $G$. We can
associate to each representation $\rho\co\pi_1M\to G$, a volume
${\rm vol}_G(M,\rho)$ in the following way.

First  fix a
$G$-invariant Riemannian metric $g_X$ on $X$, and denote by
$\omega_X$ the corresponding $G$-invariant volume form. Let $\t{M}$ denote the universal covering of $M$. We think of the elements
$\t{x}$ of  $\t{M}$
as the homotopy classes of paths $\gamma\co[0,1]\to M$ with $\gamma(0)=x_{0}$ which are acted by $\pi_1(M,x_0)$ by setting $[\sigma].\t{x}=[\sigma.\gamma]$,
where $.$ denotes the composition of paths.

A developing map $D_{\rho}\co\t{M}\to X$ associated to $\rho$ is a
$\pi_1M$-equivariant map such that
for any $x\in \t{M}$ and $\alpha \in \pi_1M$, we have
$$D_{\rho}(\alpha.x)=\rho(\alpha)D_{\rho}(x)$$ where $\rho(\alpha)$ acts on $X$ as an isometry.
Such a map does exist and can be constructed explicitly as in
\cite{BCG}: Fix a triangulation $\Delta_M$ of $M$, then it lifts to a
triangulation $\Delta_{\t{M}}$ of $\t{M}$, which is $\pi_1M$-invariant.
Then fix a fundamental domain $\Omega$ of $M$ in $\t{M}$ such that
the zero skeleton $\Delta^0_{\t{M}}$ misses the frontier of $\Omega$. Let
$\{x_1,\ldots,x_l\}$ be  the vertices of $\Delta^0_{\t{M}}$ in $\Omega$, and
let $\{y_1,\ldots,y_l\}$ be  any $l$ points in $X$.  We first set
$$D_{\rho}(x_i)=y_i, \,\, i=1 ,\ldots, l.$$
Then extend $D_{\rho}$ in a $\pi_1M$-equivariant way to $\Delta^0_{\t{M}}$:
for any vertex $x$ in $\Delta^0_{\t{M}}$, there is a unique  vertex
$x_i$ in $\Omega$ and $\alpha_x\in \pi_1M$ such that
$\alpha_x.x_i=x$, and we set
$D_{\rho}(x)=\rho(\alpha_x)^{-1}D_{\rho}(x_i)$. Finally we extend
$D_{\rho}$ to edges, faces, etc., and $n$-simplices of $\Delta_{\t{M}}$
by straightening their images to totally geodesics objects using the homogeneous
metric on the contractible space $X$.
This  map is unique up to equivariant homotopy. Then
$D_{\rho}^{\ast}(\omega_X)$ is a $\pi_1M$-invariant closed $n$-form
on $\t{M}$, therefore can be thought of as a closed $n$-form on $M$. Then we define
$${\rm vol}_G(M,\rho)=\int_MD_{\rho}^{\ast}(\omega_X)=\sum_{i=1}^s\epsilon_i {\rm vol}_X(D_{\rho}(\t\Delta_i))$$
Here $\{\Delta_1,\ldots,\Delta_s\}$ are the $n$-simplices of $\Delta_M$,
$\t \Delta_i$ is a lift of $\Delta_i$ and $\epsilon_i=\pm 1$ depends on whether $D_{\rho}|\t\Delta_i$ preserves the orientation or not.

\section{Positive simplicial volume implies virtually positive Seifert volume}\label{Sec-Theorem-virt-positive}

In this section, we adapt Theorem \ref{virt-dom} of \cite{Sun}
to the framework of \cite{DLW} to prove Theorem \ref{virt-positive}.

\subsection{Virtual representation through geometric decomposition}
We recall some results from \cite{DLW}.
The following additivity principle allows
us to compute the representation volume by information on the JSJ pieces.
It is proved by using the
relation between the representation volume and the Chern--Simons theory.

\begin{theorem}[{Additivity principle, \cite[Theorem 3.5]{DLW}; see also \cite{DW2}}]\label{additive}
            Let $M$ be an oriented closed $3$-manifold with JSJ tori $T_1,\cdots,T_r$ and JSJ pieces
            $J_1,\cdots,J_k$,
            and let $\zeta_1,\cdots,\zeta_r$ be slopes on $T_1,\cdots, T_r$, respectively.

            Suppose that $G$ is either ${\rm Iso}_e\t{{\rm SL}_2(\R)}$ or ${\rm PSL}(2;{\C})$, and that
                $$\rho:\pi_1(M)\to G$$
            is a representation vanishing on the slopes $\zeta_i$,  and that
                $\hat\rho_i:\pi_1(\hat{J_i})\to G$
            are the induced representations, where $\hat{J}_i$ is the Dehn filling of $J_i$ along
            slopes adjacent to its boundary, with the induced orientations. Then:
			$${\rm vol}_G(M,\rho)={\rm vol}_G({\hat J}_1,\hat{\rho}_1)+ {\rm vol}_G({\hat J}_2,\hat{\rho}_2)+
			\ldots+ {\rm vol}_G({\hat J}_k,\hat{\rho}_k).$$
\end{theorem}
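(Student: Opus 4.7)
The plan is to work from the developing-map definition of $\mathrm{vol}_G$ recalled in Section \ref{Sec-preliminaries}. First I would choose a triangulation of $M$ in which each JSJ torus $T_i$ and each JSJ piece $J_i$ is a subcomplex, and let $D_\rho\co\widetilde M\to X$ be a $\rho$-equivariant developing map, where $X$ is either $\mathbb H^3$ or the model space for $\widetilde{\mathrm{SL}_2(\mathbb R)}$. Since the $3$-form $D_\rho^\ast\omega_X$ descends to $M$,
$$\mathrm{vol}_G(M,\rho)=\int_M D_\rho^\ast\omega_X=\sum_{i=1}^k\int_{J_i}D_\rho^\ast\omega_X,$$
and the theorem reduces to the piece-by-piece identity $\int_{J_i}D_\rho^\ast\omega_X=\mathrm{vol}_G(\widehat J_i,\widehat\rho_i)$ for each $i$. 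The hypothesis $\rho(\zeta_j)=1$ on each boundary slope guarantees that $\rho|_{\pi_1(J_i)}$ descends to $\widehat\rho_i\co\pi_1(\widehat J_i)\to G$, so the right-hand side is defined.

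Next I would construct a developing map for $\widehat\rho_i$ on $\widetilde{\widehat J_i}$ that agrees with $D_\rho$ on $\widetilde{J_i}$, after an equivariant homotopy on a collar of $\partial J_i$, and that is \emph{thin} on each filled solid torus $V_j$, meaning its image has dimension at most $2$. The universal cover $\widetilde V_j\cong D^2\times\mathbb R$ must carry a map equivariant for the cyclic group generated by $\widehat\rho_i(\eta_j)$, where $\eta_j\in\pi_1(\widehat J_i)$ is the class of the core of $V_j$. In every Jordan type---identity, elliptic, parabolic, or loxodromic, and the analogous classification in $\widetilde{\mathrm{SL}_2(\mathbb R)}$---the isometry $\widehat\rho_i(\eta_j)$ preserves a fixed point, a fixed geodesic, or an invariant horosphere, and $\widetilde V_j$ admits an equivariant map into such a $2$-dimensional invariant totally geodesic subspace. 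Since $\omega_X$ is top-degree on the $3$-manifold $X$, this thin map pulls $\omega_X$ back to zero, so each $V_j$ contributes nothing to $\mathrm{vol}_G(\widehat J_i,\widehat\rho_i)$.

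The main obstacle is the matching step. The restriction of $D_\rho$ to a lift $\widetilde{T_j}\subset\widetilde{J_i}$ is equivariant under $\pi_1(T_j)=\langle\zeta_j,\eta_j\rangle$ with $\rho(\zeta_j)=1$, but need not collapse $\zeta_j$-orbits to points the way a thin extension over $V_j$ does. Because $X$ is contractible and $\rho(\zeta_j)=1$, any two $\pi_1(T_j)$-equivariant maps $\widetilde{T_j}\to X$ are equivariantly homotopic, so I would realize such a homotopy in an equivariant collar of $\widetilde{T_j}$ inside $\widetilde{J_i}$ to bring $D_\rho$ into agreement with the chosen thin extension on $\widetilde V_j$. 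Closedness of $\omega_X$ and Stokes' theorem then show that $\int_{J_i}D_\rho^\ast\omega_X$ is unchanged by this deformation, since the deformation is supported in a compact region of $\widetilde{J_i}$ modulo the deck action. Gluing across $\partial J_i$ now yields a genuine developing map for $\widehat\rho_i$ on $\widehat J_i$, realizing the piece-by-piece identity; summing over $i$ completes the proof. I expect the Chern--Simons-theoretic approach of \cite{DLW,DW2} streamlines this bookkeeping by recasting the boundary terms as secondary Chern--Simons contributions that cancel under the vanishing-on-$\zeta_j$ hypothesis.
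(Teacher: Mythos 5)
The paper does not prove this statement: it is quoted from \cite[Theorem 3.5]{DLW} (see also \cite{DW2}), and the surrounding text explicitly says the proof there goes through the relation between representation volume and Chern--Simons theory. Your proposal attempts a different, more direct route via developing maps and a cut-and-paste argument, which is a reasonable thing to try; but as written it has a genuine gap at the matching step.

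The problematic sentence is ``Closedness of $\omega_X$ and Stokes' theorem then show that $\int_{J_i}D_\rho^\ast\omega_X$ is unchanged by this deformation, since the deformation is supported in a compact region of $\widetilde{J_i}$ modulo the deck action.'' This is false as stated. For a homotopy $F\co J_i\times[0,1]\to X$ (equivariant upstairs, descending mod $\pi_1$), Stokes applied to the closed form $\omega_X$ over $J_i\times[0,1]$ gives
$$\int_{J_i}F_1^\ast\omega_X-\int_{J_i}F_0^\ast\omega_X \,=\, \pm\int_{\partial J_i\times[0,1]}F^\ast\omega_X,$$
and the right-hand boundary term does not vanish merely because the deformation lives in a collar; the collar is precisely where the boundary contribution is produced. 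To make your strategy work you must additionally show that the homotopy on $\partial J_i\times[0,1]$ can be chosen \emph{thin}, i.e.\ with differential of rank at most $2$ everywhere, so that $F^\ast\omega_X\equiv 0$ on $\partial J_i\times[0,1]$. This is where the hypothesis $\rho(\zeta_j)=1$ must actually be used: both the restriction of $D_\rho$ to $\widetilde{T_j}$ and the boundary restriction of your thin extension over $\widetilde V_j$ are equivariant for the cyclic group $\rho(\pi_1(T_j))=\langle\widehat\rho_i(\eta_j)\rangle$, and one needs to argue that any two such equivariant maps of the cylinder $\widetilde{T_j}/\langle\zeta_j\rangle$ into $X$ can be joined by an equivariant homotopy whose image lies in a $2$-dimensional invariant subspace (e.g.\ by first pushing each to a map into a $1$-dimensional invariant set such as a fixed geodesic, fiber, or horocycle, then interpolating within an invariant totally geodesic surface). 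Without this, the piece-by-piece identity $\int_{J_i}D_\rho^\ast\omega_X=\mathrm{vol}_G(\widehat J_i,\widehat\rho_i)$ is not established.

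Two smaller remarks. First, your reduction $\mathrm{vol}_G(M,\rho)=\sum_i\int_{J_i}D_\rho^\ast\omega_X$ and the thin-filling step over $\widetilde V_j$ are both fine, and the latter correctly identifies invariant $2$-dimensional totally geodesic subspaces in every Jordan type for both $\Hi^3$ and $\Sft$. Second, if you do fill the gap above, your argument would be a genuinely more elementary proof than the Chern--Simons approach of \cite{DLW,DW2}: it trades the secondary-class bookkeeping for an explicit control of boundary terms in the developing-map integral, at the cost of having to produce the thin equivariant homotopies by hand. In the Chern--Simons formulation, the vanishing of the boundary contribution is encoded in the additivity of the Chern--Simons invariant under gluing along tori on which the connection is flat and reducible, which is exactly what $\rho(\zeta_j)=1$ buys you.
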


The following simple lemma 
suggests that we should focus on those JSJ pieces whose groups have non-elementary images under $\rho$.

\begin{lemma}[{\cite[Lemma 3.6]{DLW}}]\label{Almost trivial representation}
Suppose that $G$ is either
${\rm Iso}_e\t{{\rm SL}_2(\R)}$ or ${\rm PSL}(2;{\C})$ and that $M$ is a closed oriented 3-manifold. If $\rho:\pi_1M\to G$ has image either infinite cyclic or
finite, then  ${\rm vol}_G({ M}, {\rho})=0$.
\end{lemma}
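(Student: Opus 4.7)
My plan rests on the fact that the integral $\mathrm{vol}_G(M,\rho)=\int_M D_\rho^*\omega_X$ depends only on the equivariant homotopy class of the developing map, which in turn is determined by $\rho$ (since $X=G/K$ is contractible). It therefore suffices, in each of the two cases, to produce an equivariant $D_\rho$ whose image lies in a proper subset of $X$; the pullback of the top-degree form $\omega_X$ then vanishes pointwise for rank reasons.

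For the finite-image case, I would reduce to the trivial representation by passing to the finite cover $M'\to M$ of degree $d=|\rho(\pi_1M)|$ corresponding to $\ker\rho$. The pulled-back representation $\rho'\co\pi_1M'\to G$ is trivial, so it admits the constant developing map $\t{M'}\to\{x_0\}\subset X$, which is automatically $\pi_1M'$-equivariant and whose pullback of any form is zero. Hence $\mathrm{vol}_G(M',\rho')=0$. Since the representation volume is multiplicative under finite covers (the form $D_{\rho'}^*\omega_X$ on $\t{M'}=\t{M}$ coincides with $D_\rho^*\omega_X$, and $\int_{M'}=d\int_M$ for pulled-back top forms), we obtain $d\cdot\mathrm{vol}_G(M,\rho)=\mathrm{vol}_G(M',\rho')=0$, and therefore $\mathrm{vol}_G(M,\rho)=0$.

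For the infinite-cyclic-image case, write $\rho(\pi_1M)=\langle g\rangle$ with $g$ of infinite order. The strategy is to find a $1$-dimensional $\langle g\rangle$-invariant geodesic $A\subset X$ (an axis of $g$), place all vertex values of $D_\rho$ on $A$, and extend $\pi_1M$-equivariantly by geodesic straightening inside $X$. Since the convex hull of points on a geodesic stays on that geodesic, the whole straightened map $D_\rho$ lands in the $1$-dimensional $A$, and so $D_\rho^*\omega_X\equiv 0$ pointwise. Such an axis exists when $g$ is loxodromic or irrationally elliptic in ${\rm PSL}(2;\C)$, and analogously when the generator of $\langle g\rangle$ in ${\rm Iso}_e\t{{\rm SL}_2(\R)}$ has a non-parabolic projection to ${\rm PSL}(2;\R)$ (the pure fiber-translation case in particular has the fiber itself as an invariant geodesic).

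The main obstacle is the remaining degenerate case of a parabolic generator, which admits no invariant geodesic line in $X$. For this case I would switch to a cohomological argument: the closed form $D_\rho^*\omega_X$ represents a class in $H^3(M;\R)$ that factors as $\rho^*c_G$, where $c_G\in H^3(G^\delta;\R)$ is the universal volume class obtained from $\omega_X$ via the contractibility of $X$. This pullback factors through $H^3(\rho(\pi_1M);\R)$, which vanishes both for $\rho(\pi_1M)\cong\Z$ (since $\mathrm{B}\Z\simeq S^1$) and for finite image (positive-degree real cohomology of a finite group is zero). The same cohomological detour would in fact subsume both parts of the lemma and yield a uniform proof.
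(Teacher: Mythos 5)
The paper does not actually prove this lemma; it is quoted verbatim as \cite[Lemma 3.6]{DLW}, so there is no in-paper argument to compare against. Evaluating your proposal on its own merits: it is correct, and the cohomological argument at the end is the one to keep, since it subsumes both cases at once. Concretely, the class $[D_\rho^*\omega_X]\in H^3(M;\R)$ is the pullback of the universal class $[\omega_X]\in H^3(BG^\delta;\R)$ (represented by the homogeneous cocycle $(g_0,\dots,g_3)\mapsto\int_{\Delta(g_0 x_0,\dots,g_3 x_0)}\omega_X$) along $M\to B\pi_1M\xrightarrow{B\rho}BG^\delta$; since $\rho$ factors through its image $H=\rho(\pi_1M)$, the pullback factors through $H^3(H;\R)$, and $H^3(\Z;\R)=0$ (because $B\Z\simeq S^1$) while $H^n(F;\R)=0$ for $n>0$ and $F$ finite. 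Hence $[D_\rho^*\omega_X]=0$ and $\mathrm{vol}_G(M,\rho)=\langle[D_\rho^*\omega_X],[M]\rangle=0$. This is the standard characteristic-class argument for small images and is almost certainly what \cite{DLW} does.

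Your two preliminary geometric routes are weaker and, once the cohomological step is in place, unnecessary. The finite-cover reduction is sound (multiplicativity of $\mathrm{vol}_G$ under passage to the kernel cover is immediate from the definition), but it is already a special case of the cohomological vanishing. The invariant-geodesic argument does handle the loxodromic/elliptic cases, but as you correctly flag, it genuinely fails for a parabolic generator in $\mathrm{PSL}(2;\C)$ or a parabolic-over-$\H^2$ isometry of $\Sft$, which preserve no geodesic line; and in $\Sft$, which is not nonpositively curved, one must also take some care to make \emph{geodesic straightening with prescribed vertices} well defined and to check that vertices on a common invariant curve force a degenerate image. None of these issues touch the cohomological argument, so lead with that and drop the geometric detour.
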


The existence of a class inversion for the target group
				has played  an important role in \cite{DLW}
				for constructing virtual representation of mixed $3$-manifold groups.
				Here we quote the following definition.
				An intimately related notion called CI completion
				is introduced and studied in this paper when we prove
				Theorem \ref{main-unbounded} (see Subsection \ref{Subsec-CIcompletion}).

\begin{definition}[{\cite[Definition 5.1]{DLW}}]\label{classInvertible} Let $\mathscr{G}$ be an arbitrary group
        and  $\{\,[A_i]\,\}_{i\in I}$ be a collection
        of conjugacy classes of abelian subgroups.
        By a \emph{class inversion} with respect to $\{\,[A_i]\,\}_{i\in I}$,
        we mean an outer automorphism
        $[\nu]\in\mathrm{Out}(\mathscr{G})$,
        such that for any representative abelian subgroup $A_i$
        of each $[A_i]$, there is a representative automorphism
        $\nu_{A_i}:\mathscr{G}\to\mathscr{G}$ of $[\nu]$
        that preserves $A_i$, taking every
        $a\in A_i$ to its inverse.
        We say $\mathscr{G}$ is \emph{class invertible}
        with respect to $\{\,[A_i]\}_{i\in I}$,
        if there exists a class inversion.
        We often ambiguously call any collection
        of representative abelian subgroups $\{\,A_i\,\}_{i\in I}$
        a class invertible collection, and call any representative automorphism
        $\nu$ a class inversion.
    \end{definition}

Now we state the following fundamental construction about virtual representation extensions.
It uses works of Przytycki--Wise \cite{PW1,PW2,PW3} (and \cite{Wi,HW}) and Rubinstein--Wang \cite{RW}
(see also \cite{Liu})
to understand virtual properties of $3$-manifolds with nontrivial geometric decomposition.

\begin{theorem}[{\cite[Theorem 5.2]{DLW}}]\label{virtualExtensionRep}
        Let $\mathscr{G}$ be a
        group, and
        $M$ be an irreducible orientable closed $3$-manifold with non-trivial JSJ decomposition.
        For a fixed JSJ piece $J_0\subset M$,
        suppose a representation
            $$\rho_0:\pi_1(J_0)\to\mathscr{G}$$
        satisfies the following:
        \begin{itemize}
        \item for every boundary torus $T\subset \partial J_0$, $\rho_0$ has nontrivial kernel
        restricted to $\pi_1(T)$; and
        \item for all boundary tori $T\subset\partial J_0$, $\rho_0(\pi_1(T))$
        form a class invertible collection of abelian subgroups of $\mathscr{G}$.
        \end{itemize}
        Then there exist a finite
        regular cover
            $$\kappa:\tilde{M}\to M,$$
        and a representation
            $$\tilde{\rho}:\pi_1(\tilde{M})\to\mathscr{G},$$
        satisfying the following:
        \begin{itemize}
        \item for one or more elevations $\tilde{J_0}$ of $J_0$,
        the restriction of $\t \rho$ to $\pi_1(\tilde{J_0})$ is,
        up to a class inversion, conjugate
        to the pull-back $\kappa^*(\rho_0)$; and
        \item for any elevation $\tilde{J}$ other than the above, of any geometric piece
        $J$, the restriction of $\tilde{\rho}$ to $\pi_1(\tilde{J})$ is cyclic,
        possibly trivial.
        \end{itemize}
    \end{theorem}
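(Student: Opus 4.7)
The strategy is to exploit separability of JSJ pieces in $\pi_1(M)$ together with the class invertibility hypothesis to construct $\tilde{M}$ and $\tilde{\rho}$ piece by piece. I would proceed in two stages.

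\emph{Cover construction.} First, use virtual specialness of mixed 3-manifold groups (Przytycki--Wise, combined with Agol--Wise) to pass to a finite regular cover $\kappa\co\tilde{M}\to M$ whose JSJ decomposition is the lift of that of $M$, so each elevation of a piece is a genuine cover of that piece. Refine the cover so that (i) at least one elevation $\tilde{J_0}$ of $J_0$ is embedded in $\tilde{M}$, and (ii) every elevation $\tilde{J}$ of every piece of $M$ admits a homomorphism from $\pi_1(\tilde{J})$ to a cyclic target that is compatible with the peripheral data which $\rho_0$ will eventually prescribe on the boundary tori of $\tilde{J}$. The hypothesis that each $\rho_0|_{\pi_1(T)}$ has nontrivial kernel guarantees that $\rho_0(\pi_1(T))$ is already cyclic in $\mathscr{G}$, so such a cyclic target actually exists.

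\emph{Defining $\tilde{\rho}$.} On $\pi_1(\tilde{J_0})$ set $\tilde{\rho}=\kappa^*(\rho_0)$. On every other elevation of every piece, define $\tilde{\rho}$ to be cyclic, with peripheral data inherited from the adjacent piece and extended inwards via the homomorphism from (ii). Consistency at each JSJ torus $\tilde{T}$ of $\tilde{M}$ reduces to checking that the two prescribed representations of $\pi_1(\tilde{T})$ agree. The subtle case is when a deck transformation carries $\tilde{J_0}$ to another elevation $\tilde{J_0}'$ of $J_0$ meeting $\tilde{J_0}$ across $\tilde{T}$ with reversed peripheral orientation; here we apply the class inversion $\nu$ to one side and use that $\nu$ restricts to inversion on each $\rho_0(\pi_1(T))$ by the class invertibility hypothesis, which converts the orientation-reversed identification into an honest equality.

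The main obstacle is the cover construction itself: producing a single finite regular cover $\tilde{M}$ on which every non-$\tilde{J_0}$ piece simultaneously carries a cyclic quotient with the correct peripheral data at every incident boundary torus, while preserving regularity. For hyperbolic pieces this requires virtual retractions onto peripheral subgroups arising from cubical small-cancellation and the Malnormal Special Quotient Theorem of Wise; for Seifert pieces one needs virtual homological trivialization of peripheral tori in the spirit of Rubinstein--Wang (cf.\ the use of \cite{RW} and Liu's work in \cite{DLW}). The class invertibility hypothesis is exactly what prevents the gluing from being obstructed by the deck group permuting orientations on elevations of $J_0$; without it, no single choice of $\tilde{\rho}$ on the full orbit of $\tilde{J_0}$ could be made consistent with a finite regular cover.
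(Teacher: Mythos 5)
This theorem is quoted by the present paper from \cite{DLW} (Theorem 5.2 there) and is not reproved here, so there is no ``paper's own proof'' to compare against directly; nonetheless, your sketch can be assessed against what the cited argument actually has to accomplish and against what the present paper says it uses.

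Your general approach --- assign $\kappa^*(\rho_0)$ to one or more elevations of $J_0$, assign cyclic representations elsewhere, invoke the class inversion to reconcile orientation-reversal at JSJ tori, and appeal to virtual specialness (Przytycki--Wise, Agol--Wise, Haglund--Wise) and Rubinstein--Wang for the separability inputs --- is the right framework, and it names the correct tools. However, two concrete gaps remain.

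First, the claim that ``$\rho_0|_{\pi_1(T)}$ has nontrivial kernel guarantees that $\rho_0(\pi_1(T))$ is already cyclic'' is false. A quotient of $\mathbb{Z}^2$ by a nontrivial subgroup is an abelian group of rank at most $1$, which need not be cyclic: for instance $\mathbb{Z}^2/\langle(2,0)\rangle\cong \mathbb{Z}/2\oplus\mathbb{Z}$. The hypothesis is consistent with torsion in the peripheral image, and one has to pass to a carefully chosen finite-index sublattice of $\pi_1(T)$ to kill that torsion before the ``cyclic target'' strategy even makes sense. This is not merely cosmetic, since the conclusion requires the restriction of $\tilde\rho$ to boundary tori of $\tilde J_0$ to land in the cyclic image of the adjacent piece, so the covering has to be engineered so that the induced peripheral images are genuinely cyclic.

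Second, and more seriously, the propagation of cyclic representations across the non-distinguished pieces is asserted rather than constructed. Your phrase ``with peripheral data inherited from the adjacent piece and extended inwards'' conceals the main difficulty: a non-distinguished piece can border the $\tilde J_0$-orbit along several tori, and the dual JSJ graph generally contains cycles, so the data arriving along different routes must be made to agree. This coherence --- a global statement about monodromy around cycles of the JSJ graph in the chosen cover, not a local gluing at a single torus --- is exactly where the virtual-retraction and omnipotence-type properties coming from specialness are applied, and it is the technical core of \cite[Theorem~5.2]{DLW}. The class inversion is indeed used there, but its role is not restricted to the single special configuration you describe (two elevations of $J_0$ meeting across a torus with reversed orientation); it must render the whole cyclic assignment coherent with respect to the deck group of the regular cover. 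As written, your sketch flags this as ``the main obstacle'' and lists the relevant machinery, but does not indicate how the cover and the cyclic assignments are simultaneously produced so that the consistency check at every torus succeeds. Without that step the proof does not close.
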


\subsection{Proof of Theorem \ref{virt-positive}}

Now we are ready to prove Theorem \ref{virt-positive}, and here is a sketch of the strategy. Since we can suppose that the manifold has a hyperbolic JSJ piece,
Theorem \ref{virt-dom} gives a virtual representation of the hyperbolic piece with positive Seifert volume. Then
with Lemma \ref{class inversion}, Theorem \ref{virtualExtensionRep} extends the virtual representation to the whole manifold, and  the volume of the virtual representation can be calculated by Theorem \ref{additive} and
Lemma \ref{Almost trivial representation}.


By Theorem \ref{non-zero and zero} and Theorem \ref{virt-dom}, we may assume that $M$ has non-trivial JSJ decomposition
and contains a hyperbolic JSJ piece $J$ in $M$.
Suppose $\partial J$ is a union of tori $T_1,..., T_k$. Let $\alpha_i$ be a slope on $T_i$, then call $\alpha=\{\alpha_1,..., \alpha_k\}$
a slope on $\partial M$. Denote by $J(\alpha)$ the closed orientable 3-manifold obtained by Dehn filling of $k$ solid tori $S_1,..., S_k$
to $J$ along $\alpha$. We can choose $\alpha$ so that  $J(\alpha)$ is a hyperbolic 3-manifold (Theorem \ref{surgered}).

Take a closed orientable manifold $N$ of non-vanishing Seifert volume.
	For example, a circle bundle $N$ with Euler class $e\neq0$ over a closed surface of Euler characteristic $\chi<0$
	works: In fact, for such $N$,
		$$\SV(N)\,=\,\frac{4\pi^2|\chi|^2}{|e|}\,>\,0.$$

By Theorem \ref{virt-dom} there is a finite cover $q: Q\to J(\alpha)$ such that $Q$ dominates $N$, therefore
$\SV(Q)>0$. Let $S=\cup S_i$, then $S'=q^{-1}(S)\subset Q$ is a union of
solid tori and $J'=Q\setminus S'$ is a connected 3-manifold which covers $J$. Moreover, $Q$ is obtained by Dehn filling $S'$ to
$J'$ along $\alpha'$, where $\alpha'$ is a slope of $\partial J'$ which covers $\alpha$
(i.e. each component of $\alpha'$ is an elevation of a component of $\alpha$). Fix $J'$
and $\alpha'$ for the moment, we have the following statement.

	\begin{lemma}
		Let $\t J$ be a finite covering of $J'$ and $\t \alpha$ be the slope of $\partial \t J$ which covers $\alpha'$,
		then $\SV(\t J(\t \alpha))> 0$.
  \end{lemma}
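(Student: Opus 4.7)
The plan is to realize $\t J(\t \alpha)$ as a finite cover of $Q = J'(\alpha')$ and then invoke the domination property of Seifert volume (Theorem \ref{basic property of volume of presentation}(4)) to transfer the already-known positivity $\SV(Q)>0$ up to $\SV(\t J(\t \alpha))$.

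To construct the covering $\t J(\t \alpha) \to Q$, I would extend the given cover $\t J \to J'$ across the Dehn fillings on both sides. For each boundary torus $\t T \subset \partial \t J$, the cover $\t J \to J'$ restricts to a finite covering $\t T \to T'$ onto some boundary torus $T' \subset \partial J'$. By the hypothesis on $\t \alpha$, the component of $\t \alpha$ supported on $\t T$ is an elevation of the component of $\alpha'$ supported on $T'$; in particular, it is mapped onto $\alpha'|_{T'}$ by a covering of circles. Standard $3$-manifold topology then guarantees that this covering of boundary tori extends uniquely to a covering of solid tori $\t S_{\t T} \to S'_{T'}$, where $\t S_{\t T}$ and $S'_{T'}$ are the filling solid tori whose meridians are $\t \alpha|_{\t T}$ and $\alpha'|_{T'}$, respectively. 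Gluing these solid-torus covers onto $\t J \to J'$ along $\partial \t J$ yields the desired finite cover $\t J(\t \alpha) \to J'(\alpha') = Q$.

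Once this cover is in place, Theorem \ref{basic property of volume of presentation}(4) delivers
$$\SV(\t J(\t \alpha)) \,\geq\, [\t J(\t \alpha):Q] \cdot \SV(Q) \,>\, 0,$$
which is the conclusion of the lemma. I do not anticipate any genuine obstacle in this argument: the only nontrivial ingredient is the extension of a finite cover across Dehn fillings whose slopes match via elevations, which is entirely routine, and the positivity then follows for free from the domination property. The real difficulty of the overall strategy sits upstream, in the application of Theorem \ref{virt-dom} that produced the cover $Q$ of $J(\alpha)$ with $\SV(Q)>0$ in the first place.
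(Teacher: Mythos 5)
Your strategy tracks the paper's closely --- extend $\t J \to J'$ across the Dehn fillings and invoke the domination property (Theorem \ref{basic property of volume of presentation}(4)) --- but the assertion that the extension is an honest (unbranched) covering is incorrect in general. Suppose a component $\t\alpha|_{\t T}$ of $\t\alpha$ is an elevation of $\alpha'|_{T'}$ of degree $m$. A genuine covering of solid tori always carries meridian to meridian with degree one, so if $m>1$ the unique extension of $\t T \to T'$ across the filling solid tori is a map $\t S_{\t T} \to S'_{T'}$ that is branched over the core of $S'_{T'}$ with branching index $m$. Since $\t J$ is allowed to be an \emph{arbitrary} finite cover of $J'$, there is no reason the elevation degrees should all equal one, and so the resulting map $\t J(\t\alpha) \to J'(\alpha') = Q$ is in general only a \emph{branched} covering, branched over the cores of $S'$ --- which is exactly what the paper states. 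The discrepancy is harmless for the conclusion: a branched covering between closed oriented $3$-manifolds is still a map of non-zero degree, and Theorem \ref{basic property of volume of presentation}(4) applies to any map, not only to covers. Replacing ``covering'' with ``branched covering'' brings your argument in line with the paper's.
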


  \begin{proof} By the choice of $\t \alpha$, the covering $\t J\to J'$ extends to a branched covering  $\t J(\t \alpha)\to J'(\alpha')$
  branched over the cores of $S'$. Since the branched covering is a map of non-zero degree, and $\SV(J'(\alpha'))>0$,
  we have $\SV(\t J(\t \alpha))> 0$.
  \end{proof}

  According to  \cite[Proposition 4.2]{DLW},
  there is a finite cover  $p: \t M\to M$ such that each JSJ piece $\t J$ of $\t M$ that covers $J$ factors through
  $J'$. In particular, in the notations we have just used, $\SV(\t J(\t \alpha))>0$.
 To simplify the notations, we still rewrite $\t M$, $\t J$, $\t \alpha$ as $ M$, $J$, $\alpha$.
 Since Theorem \ref{virt-positive} concludes with a virtual property, we need only to prove the following statement.

   \begin{theorem}\label{2}
  Suppose $M$ is a closed orientable 3-manifold with non-trivial  JSJ decomposition,
  and there is a JSJ piece $J$ and a slope $\alpha$ of $\partial J$ such that  $\SV(J( \alpha))> 0$.
  Then there is a finite cover $\t M$ of $M$
  such that $ \SV(\t M)>0$.
  \end{theorem}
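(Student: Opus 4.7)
The plan is to produce, on some finite cover $\tilde M$ of $M$, a representation into $G={\rm Iso}_e\t{{\rm SL}_2(\R)}$ with positive volume by extending a chosen representation of $\pi_1(J)$ via Theorem \ref{virtualExtensionRep} and computing the total volume through the additivity principle. Since $\SV(J(\alpha))>0$, pick a representation $\rho_\alpha\co\pi_1(J(\alpha))\to G$ with $\mathrm{vol}_G(J(\alpha),\rho_\alpha)>0$, and let $\rho_0\co\pi_1(J)\to G$ be its precomposition with the quotient $\pi_1(J)\to\pi_1(J(\alpha))$. By construction, $\rho_0(\alpha_i)=1$ for every component $\alpha_i$ of $\alpha$, so $\rho_0$ has nontrivial kernel restricted to each $\pi_1(T_i)$.

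Next I would verify that the peripheral collection $\{\rho_0(\pi_1(T_i))\}$ is class invertible in $G$; this is the class inversion lemma for $G$ alluded to in the proof sketch, a structural fact providing an outer automorphism of $G$ that simultaneously inverts every abelian subgroup. Since each peripheral image is abelian (indeed cyclic, as it is a quotient of $\pi_1(T_i)$ by the primitive element $\alpha_i$), both hypotheses of Theorem \ref{virtualExtensionRep} hold. Applying that theorem with $\mathscr{G}=G$ and $J_0=J$ yields a finite regular cover $\kappa\co\tilde M\to M$ and a representation $\tilde\rho\co\pi_1(\tilde M)\to G$ such that on some elevations $\tilde J_0$ of $J$, the restriction $\tilde\rho|_{\pi_1(\tilde J_0)}$ is, up to conjugation and class inversion, equal to $\kappa^{*}\rho_0$, while on every other elevated JSJ piece $\tilde\rho$ has cyclic image.

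It remains to compute $\mathrm{vol}_G(\tilde M,\tilde\rho)$ via Theorem \ref{additive}. For each JSJ torus $\tilde T$ of $\tilde M$, choose a slope killed by $\tilde\rho$ as follows: if $\tilde T$ borders a good elevation $\tilde J_0$, use an elevation of the relevant $\alpha_i$ (killed by $\rho_0$, hence by its class inverse as well); otherwise both adjacent pieces carry cyclic $\tilde\rho$-image, so $\tilde\rho|_{\pi_1(\tilde T)}$ is cyclic and has a primitive element in its kernel. These slope choices are globally consistent because $\tilde\rho$ is defined on all of $\pi_1(\tilde M)$. Each good filled piece $\widehat{\tilde J_0}$ is a finite cover of $J(\alpha)$ and contributes $\pm[\tilde J_0:J]\cdot\mathrm{vol}_G(J(\alpha),\rho_\alpha)\neq 0$ to the additive formula, while each cyclic-image filled piece contributes $0$ by Lemma \ref{Almost trivial representation}.

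The main obstacle is ensuring that the signed contributions from the (possibly several) good elevations do not cancel. Since Theorem \ref{virtualExtensionRep} applies a single class inversion uniformly across all good elevations, those contributions carry matching signs and their sum is a positive integer multiple of $\mathrm{vol}_G(J(\alpha),\rho_\alpha)$; should any residual cancellation concern remain, one could precompose with a further cover provided by \cite[Proposition 4.2]{DLW} that isolates one good elevation and forces every other elevation of $J$ to map with cyclic image. In either case $\SV(\tilde M)\geq|\mathrm{vol}_G(\tilde M,\tilde\rho)|>0$, completing the proof of Theorem \ref{2}.
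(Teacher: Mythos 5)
Your proposal follows essentially the same route as the paper: precompose a positive-volume representation of $\pi_1(J(\alpha))$ with the filling surjection, verify the two hypotheses of Theorem~\ref{virtualExtensionRep} using Lemma~\ref{class inversion}, and compute the volume of the resulting virtual representation via Theorem~\ref{additive} and Lemma~\ref{Almost trivial representation}. The structure of the argument is correct.

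The one place where your reasoning does not quite hold up is the sign-consistency step. You assert that ``Theorem~\ref{virtualExtensionRep} applies a single class inversion uniformly across all good elevations,'' but the theorem's conclusion only says that on each good elevation $\tilde J_0$ the restriction of $\tilde\rho$ is conjugate to $\kappa^*(\rho_0)$ \emph{up to a class inversion}; it does not assert that either all or none of the good elevations receive the inversion, nor that the same representative $\nu$ is used. Your fall-back appeal to \cite[Proposition~4.2]{DLW} also does not do what you want: that proposition supplies a cover in which every elevation of $J$ factors through a prescribed cover $J'$, not one in which exactly one elevation is ``good'' and the rest are cyclic; in general the virtually special machinery behind Theorem~\ref{virtualExtensionRep} gives no control over how many elevations are of pull-back type.

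The correct resolution is already available in Lemma~\ref{class inversion} as strengthened in this paper: the class inversion of $\mathrm{Iso}_e\widetilde{\mathrm{SL}_2(\R)}$ is realized by conjugation by an element of $\mathrm{Iso}\,\widetilde{\mathrm{SL}_2(\R)}$ whose action on $\widetilde{\mathrm{SL}_2(\R)}$ is \emph{orientation-preserving}. Consequently $\mathrm{vol}_G(\,\cdot\,,\nu\eta)=\mathrm{vol}_G(\,\cdot\,,\eta)$, so every good elevation contributes a term of the same sign, namely a positive multiple of $\mathrm{vol}_G(J(\alpha),\rho_\alpha)$, regardless of which elevations carry the inversion. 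Replacing your ``uniform inversion'' claim and the Proposition~4.2 hedge with this observation closes the gap and recovers exactly the paper's argument.
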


  We are going to apply Theorem \ref{virtualExtensionRep} to prove Theorem \ref{2}.
  So we first need to check that the 3-manifold $M$ and the local representation
  $\rho\co \pi_1(J)\rightarrow G$
	(which gives positive representation volume for $J(\alpha)$) in
  Theorem \ref{2} meet the two conditions of Theorem \ref{virtualExtensionRep}.

We first write a presentation of $\pi_1(J(\alpha))$ from $\pi_1(J)$ by attaching $k$ relations from Dehn fillings.
	Let $G=\mathrm{Iso_e\widetilde {SL_2(\R)}}$ be  the identity
	component of  $\mathrm{Iso\widetilde {SL_2(\R)}}$,  the isometry group of
  the Seifert space $\widetilde {\mathrm{SL_2(\R)}}$.
  Then the condition  $\SV(J( \alpha))> 0$ implies that there is a representation $\rho : \pi_1(J) \to G$ such that for each component
  $T_i$ of $\partial J$, $\rho(\pi_1(T_i))$ is a (possibly trivial) cyclic group. Moreover, $\rho$ extends to
  $\hat \rho: \pi_1(J(\alpha)) \to G$ such that $V_G(J(\alpha), \hat \rho)>0$. So the first condition of Theorem 5.2 of \cite{DLW} is satisfied.
  The following lemma, which strengthens \cite[Lemma 6.1 (2)]{DLW}, implies that the
  second condition of Theorem 5.2 of \cite{DLW} is also satisfied.


  \begin{lemma}\label{class inversion}
			$\mathrm{Iso}_e\widetilde{\mathrm{SL}_2(\R)}$ is class invertible with
      respect to all its cyclic subgroups, and a class inversion can
      be realized by an inner automorphism of $\mathrm{Iso}\,\widetilde{{\rm SL}_2(\R)}$.
      The corresponding action on $\widetilde{{\mathrm{SL}}_2(\R)}$ preserves the orientation.
		\end{lemma}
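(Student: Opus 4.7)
The plan is to exhibit, for each cyclic subgroup $\langle g\rangle\subset G:=\mathrm{Iso}_e\widetilde{\mathrm{SL}_2(\R)}$, an orientation-preserving isometric involution $\tilde r\in\mathrm{Iso}\,\widetilde{\mathrm{SL}_2(\R)}$ lying outside $G$, whose conjugation inverts $g$. Using the central extension $1\to\R\to G\to\mathrm{PSL}_2(\R)\to 1$ recalled in Subsection~2.1 (whose kernel $\R$ is the center, generated by fiber translations), I will arrange $\tilde r$ so that (i) it covers a reflection $r$ of $\H^2$ with $r\bar g r^{-1}=\bar g^{-1}$, where $\bar g\in\mathrm{PSL}_2(\R)$ is the image of $g$, and (ii) it reverses the fiber direction. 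Then $\tilde r$ preserves the total orientation of $\widetilde{\mathrm{SL}_2(\R)}$ automatically, because it reverses both the base and fiber orientations.

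Choosing the reflection $r$ in (i) is routine by case analysis on $\bar g$: if $\bar g$ is trivial, any reflection works; if $\bar g$ is elliptic fixing $p\in\H^2$, reflect across any geodesic through $p$; if $\bar g$ is hyperbolic with axis $\ell$, reflect across any geodesic perpendicular to $\ell$; and if $\bar g$ is parabolic with boundary fixed point $p\in\partial\H^2$, reflect across any geodesic with endpoint $p$. The existence of a fiber-reversing isometric involution lifting $r$ in (ii) is a standard feature of the Thurston model geometry $\widetilde{\mathrm{SL}_2(\R)}$: the outer automorphism of $\mathrm{PSL}_2(\R)$ given by conjugation by $\mathrm{diag}(1,-1)\in\mathrm{PGL}_2(\R)$ lifts to an involution of $\widetilde{\mathrm{SL}_2(\R)}$ that reverses fibers (as one checks on the Lie algebra basis $H,E,F$ of $\mathfrak{sl}_2(\R)$, noting that the skew-symmetric generator $E-F$ of the fiber direction is sent to its negative while the determinant on $\mathfrak{sl}_2(\R)$ remains $+1$), and conjugating inside $\mathrm{Iso}_e\widetilde{\mathrm{SL}_2(\R)}$ realizes the analogous $\tilde r$ for arbitrary $r$. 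This step is the main technical point, although it is classical.

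For the final check, let $\sigma$ denote conjugation by $\tilde r$ on $G$; it is an involution of $G$ because $\tilde r^2=e$, it acts by inversion on the center $\R$ because $\tilde r$ reverses fibers, and it acts on the quotient $\mathrm{PSL}_2(\R)$ as conjugation by $r$. By the choice of $r$, the element $\sigma(g)$ projects to $\bar g^{-1}$, so $\sigma(g)=g^{-1}c$ for some $c$ in the center. Applying $\sigma^2=\mathrm{id}_G$ and using centrality of $c$ together with $\sigma(c)=c^{-1}$,
\[ g=\sigma^2(g)=\sigma(g^{-1}c)=\sigma(g)^{-1}\sigma(c)=(g^{-1}c)^{-1}c^{-1}=c^{-2}g, \]
so $c^2=e$ in $\R$, forcing $c=e$ since $\R$ is $2$-torsion-free; consequently $\sigma(g)=g^{-1}$. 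This exhibits the desired class inversion as an inner automorphism of $\mathrm{Iso}\,\widetilde{\mathrm{SL}_2(\R)}$ whose action on $\widetilde{\mathrm{SL}_2(\R)}$ preserves orientation.
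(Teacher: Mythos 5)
Your proposal is correct and follows essentially the same route as the paper's proof: work modulo the central $\R$ via the short exact sequence onto $\mathrm{PSL}_2(\R)$, choose a reflection of $\Hi^2$ inverting the image by the same elliptic/hyperbolic/parabolic case analysis, lift it to the non-identity component of $\mathrm{Iso}\,\widetilde{\mathrm{SL}_2(\R)}$, and then kill the central correction term using that the lift squares to something central together with $2$-torsion-freeness of $\R$; the orientation-preservation is the same parity count on base and fiber. Two minor presentational differences: you invoke a concrete Lie-algebra lift of $\mathrm{Ad}(\mathrm{diag}(1,-1))$ which squares to the identity, whereas the paper only uses that $\nu^2$ is central; and the paper explicitly verifies that the various reflections all define the \emph{same} outer automorphism class of $\mathrm{Iso}_e\widetilde{\mathrm{SL}_2(\R)}$ by conjugating reflections inside $\mathrm{Iso}_+\Hi^2$, a point required by the definition of a class inversion (a single $[\nu]\in\mathrm{Out}$ inverting every class) that you handle only implicitly with the remark about conjugating the fixed lift by elements of $\mathrm{Iso}_e$; you might make that step explicit, e.g.\ by observing that all your involutions lie in the unique non-identity coset of $\mathrm{Iso}_e\widetilde{\mathrm{SL}_2(\R)}$ in $\mathrm{Iso}\,\widetilde{\mathrm{SL}_2(\R)}$, hence automatically induce the same outer automorphism.
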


    \begin{proof}
					 There are short exact sequences of groups
							$$0\longrightarrow \R\longrightarrow \mathrm{Iso}\,\widetilde{\mathrm{SL}_2(\R)}
							\stackrel{p}\longrightarrow \mathrm{Iso}\,\Hi^2
							\longrightarrow 1$$ and
							$$0\longrightarrow \R\longrightarrow \mathrm{Iso_e}\,\widetilde{\mathrm{SL}_2(\R)}
							\stackrel{p}\longrightarrow \mathrm{Iso_+}\,\Hi^2
							\longrightarrow 1.$$
						Recall that there are no orientation reversing isometries in
						the $\widetilde{\mathrm{SL}_2(\R)}$ geometry.

						For each element $\nu$ in the component of
						$\mathrm{Iso} \widetilde{\mathrm{SL}_2(\R)}$ not containing the identity,
                        $\nu$ reverses the orientation of $\R$ (the center of $\mathrm{Iso}_e\widetilde{\mathrm{SL}_2(\R)}$).
						So $\nu r \nu^{-1}=r^{-1}$
						for any $r\in \R$, and
						$\mathrm{Iso_e\widetilde{ SL_2(\R)}}$ is class invertible with
                        respect to its center $\R$, and a class inversion can
                        be realized by an inner automorphism of $\mathrm{ Iso\widetilde{{SL}_2(\R)}}$,
                        the corresponding action on $\widetilde{\mathrm{SL}_2(\R)}$ preserves the orientation.
                        Actually, this part of the proof is same with the proof of \cite[Lemma 6.1(ii)]{DLW}).
                         				
                        In the following, we suppose that $\langle \alpha \rangle$ is a cyclic subgroup
						of $\mathrm{Iso}_e\widetilde{\mathrm{SL}_2(\R)}$
						generated by an non-central element $\alpha$.
						
						For each non-trivial element $a$ in $ \mathrm{Iso_+}\Hi^2$, it is straightforward to see
						that there exists a reflection about a geodesic $l_a$ in $\Hi^2$ that conjugats $a$ to its inverse.
						The $l_a$ can be chosen as: (i) passing through the rotation center when
						$a$ is elliptic, (ii)  perpendicular with the axis of $a$ when $a$ is hyperbolic, (iii) passing through
                        the fixed point when $a$ is parabolic.
						
					    By the discussion in the last paragraph and the exact sequences,
					    there exists an element $\nu\in\mathrm{Iso}\widetilde{\mathrm{SL}_2(\R)}$
						lying in the component of $\mathrm{Iso}\widetilde{\mathrm{SL}_2(\R)}$
                        not containing the identity, such that $p(\nu)$ is a reflection of
						$\Hi^2$ conjugating $p(\alpha)$ to its inverse,
						namely, $p(\nu^{-1}\alpha\nu)=p(\alpha^{-1})$.
						We claim that
							$$\nu^{-1} \alpha\nu=\alpha^{-1}.$$

						In fact, by the short exact sequences above, we have that $\nu^{-1} a\nu=a^{-1}r$
						for some $r$ in the center $\R$.
						Since $p(\nu)$ is a reflection, $\nu^2$ is central, so
						$$a=\nu^{-2}a\nu^{2}=\nu^{-1}(a^{-1}r)\nu=(\nu^{-1} r\nu)(\nu^{-1} a\nu)^{-1}
						=r^{-1}(a^{-1}r)^{-1}=ar^{-2}.$$ Here we used that fact that
						$\nu$ is a class inversion for $\langle r\rangle$.
						So $r^{-2}$ is trivial, and $r$ is trivial as
						the center is torsion free.
						This verifies the claim. We conclude that $\nu$ realizes a class inversion
						of the cyclic subgroup $\langle a\rangle$ of $\mathrm{Iso}_e\widetilde{\mathrm{SL}_2(\R)}$.

						Suppose we have two elements $\alpha_1,\alpha_2 \in \mathrm{Iso}_e\widetilde{\mathrm{SL}_2(\R)}$.
						Then there exist $\nu_1, \nu_2 \in \mathrm{Iso} \widetilde{\mathrm{SL}_2(\R)}$
						such that $\nu_i^{-1} \alpha_i\nu_i=\alpha_i^{-1}$, and $p(\nu_i)$ is a reflection of $\Hi^2$,
                        for $i=1,2$. Since any two reflections on $\Hi^2$ are conjugate with each other by an element
						$b\in \mathrm{Iso_+}\,\Hi^2$, we have $p(\beta)p(\nu_1)p(\beta^{-1})=p(\nu_2)$
						for some $\beta\in \mathrm{Iso}_e\widetilde{\mathrm{SL}_2(\R)}$. Therefore we have
						$\beta\nu_1\beta^{-1}=\nu_2r$ for some $r\in \R$.

                        Clearly $\nu_2r$ plays the same role as $\nu_2$ did,
						so $\nu_1$ and $\nu_2$ can be chosen to be conjugate with each other by an element in
						$\mathrm{Iso}_e   \widetilde{\mathrm{SL}_2(\R)}$. We have verified that $\mathrm{Iso}_e\widetilde{\mathrm{SL}_2(\R)}$
                        is class invertible with respect to all its cyclic subgroups, and a class inversion can
                        be realized by an inner automorphism of $\mathrm{Iso}\,\widetilde{{\rm SL}_2(\R)}$,
                        and the corresponding action on $\widetilde{{\mathrm{SL}}_2(\R)}$ preserves the orientation.
				\end{proof}

        \begin{proof}[{Proof of Theorem \ref{2}}]
            Fix $J$, $\alpha$, and $\rho:\pi_1(J)\to G$ as our previous discussion, and denote them by $J_0$,
            $\alpha_0$, and $\rho_0$ to match the notations of Theorem \ref{virtualExtensionRep}. Since
            $\rho_0:\pi_1(J_0)\to G$ meets the two conditions of Theorem \ref{virtualExtensionRep},
            we can virtually extend $\rho_0$ to $\t \rho:\pi_1(\t M)\to G$, which satisfies those conditions
            in the conclusion of Theorem \ref{virtualExtensionRep}.

            By the additivity principle (Theorem \ref{additive}), we need only to compute the representation
            volume for each JSJ piece of $\t M$, then add the volumes together to compute
            $V_G(\t M, \t \rho)$. By Theorem \ref{virtualExtensionRep} and Lemma \ref{Almost trivial representation},
            only those elevations $\t{J_0}$ of $J_0$ such that the restriction of $\t {\rho}$ to $\pi_1(\t {J_0})$
            is conjugate to the pull-back $\kappa^*(\rho_0)$, up to a class inversion, could contribute to the Seifert
            representation volume of $\t{M}$.

            By Lemma \ref{class inversion}, the class inversions can be realized by conjugations of orientation
            preserving isomorphisms of $\widetilde{{\mathrm{SL}}_2(\R)}$, therefore the volumes of
            all these elevations are  positive multiples of  $V_G(J_0(\alpha_0), \hat \rho_0)>0$.
            So the Seifert representation volume of $\t M$ with respect to $\t \rho$ is positive, which implies
            $\SV(\tilde M)>0$.
            \end{proof}
						
	The completion of the proof of Theorem \ref{2} also completes the proof of Theorem \ref{virt-positive}.

  \begin{remark}\label{reformulation}
	We can reformulate what we have done in this section by the following proposition:

	\begin{proposition}\label{refine-virtualSeifertRepresentation}
		Let $M$ be an orientable closed mixed $3$-manifold and $J_0$ be a distinguished hyperbolic JSJ piece of $M$.
		Suppose that $\widehat{J}_0$ is
		a closed hyperbolic Dehn filling of $J_0$
		by sufficiently long boundary slopes.
		
		\begin{enumerate}
		\item
		For any finite cover $\widehat{J}'_0$ of $\widehat{J}_0$ and any representation
			$$\eta:\pi_1(\widehat{J}'_0)\longrightarrow\mathrm{Iso}_e\Sft,$$
		there exist a finite cover
			$$\tilde{M}'\longrightarrow M$$
		and a representation
			$$\rho:\pi_1(\tilde{M}')\longrightarrow\mathrm{Iso}_e\Sft,$$
		with the following properties:
    \begin{itemize}
        \item For one or more elevations $\tilde{J}'$ of $J_0$ contained in $\tilde{M}'$,
        the covering $\tilde{J}'\to J_0$ factors through
				a covering $\tilde{J}'\to J'_0$, where $J'_0\subset \widehat{J}'_0$
				denotes the unique elevation of $J_0\subset \widehat{J}_0$.
				The restriction of $\rho$ to $\pi_1(\tilde{J}')$ is conjugate to
				either the pull-back $\beta^*(\eta)$ or the pull-back $\beta^*(\nu\eta)$,
				where $\nu$ is a class inversion, and $\beta$ is the composition of the mentioned maps:
					$$\tilde{J}'\stackrel{\mathrm{cov.}}\longrightarrow J'_0\stackrel{\mathrm{fill}}\longrightarrow \widehat{J}'_0;$$
        \item For any elevation $\tilde{J}'$ other than the above, of any JSJ piece
        $J$ of $M$, the restriction of $\rho$ to $\pi_1(\tilde{J})$ has cyclic image,
        possibly trivial.
    \end{itemize}
		\item ${\mathrm{Vol}_{\mathrm{Iso}_e\Sft}(\tilde{M}',\rho)}$ is a positive multiple of ${\mathrm{Vol}_{\mathrm{Iso}_e\Sft}(\widehat{J}'_0,\eta)}.$
		
		\end{enumerate}
	\end{proposition}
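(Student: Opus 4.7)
The plan is to refine the proof of Theorem \ref{2} so as to track the dependence on the starting representation $\eta$ explicitly, using the same key ingredients: Proposition~4.2 of \cite{DLW}, the virtual extension theorem (Theorem \ref{virtualExtensionRep}), the additivity principle (Theorem \ref{additive}), the vanishing of volume for cyclic-image representations (Lemma \ref{Almost trivial representation}), and the orientation-preserving class invertibility of $\mathrm{Iso}_e\Sft$ with respect to its cyclic subgroups (Lemma \ref{class inversion}). Set $G=\mathrm{Iso}_e\Sft$. First, by Proposition~4.2 of \cite{DLW}, we pass to a finite cover $M_1\to M$ in which every elevation of $J_0$ factors as $\tilde J\to J'_0\to J_0$. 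We pick one such elevation $J_1\subset M_1$ and define
$$\rho_0\co\pi_1(J_1)\longrightarrow\pi_1(J'_0)\longrightarrow\pi_1(\widehat J'_0)\stackrel{\eta}{\longrightarrow}G,$$
where the middle arrow is induced by the Dehn filling inclusion. For each boundary torus $T\subset\partial J_1$, the pull-back of the filling slope of the corresponding torus of $\partial J_0$ lies in the kernel of $\rho_0|_{\pi_1(T)}$, so this restriction has nontrivial kernel and cyclic image.

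The two hypotheses of Theorem \ref{virtualExtensionRep} are thus in place: the nontrivial-kernel condition is immediate from the construction, and class invertibility of the cyclic images is Lemma \ref{class inversion}. Applying the theorem produces a regular finite cover $\kappa\co\tilde M'\to M_1$ and a representation $\rho\co\pi_1(\tilde M')\to G$ such that, for one or more elevations $\tilde J'$ of $J_1$, the restriction $\rho|_{\pi_1(\tilde J')}$ is conjugate to $\kappa^*(\rho_0)$ or to $\kappa^*(\nu\circ\rho_0)$ for a class inversion $\nu$, while on every other JSJ piece elevation in $\tilde M'$, $\rho$ has cyclic image. Each such $\tilde J'$ is simultaneously an elevation of $J_0$ in $\tilde M'$ (via $M_1$), and the required map $\beta\co\tilde J'\to J'_0\to\widehat J'_0$ is the composition of the inherited covering $\tilde J'\to J'_0$ with the Dehn filling inclusion. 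Since $\kappa^*(\rho_0)$ coincides with $\beta^*(\eta)$ under this identification, the two bullets of~(1) are verified.

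For~(2), we enlarge the collection of vanishing slopes on the JSJ tori of $\tilde M'$: on the distinguished elevations take the pull-backs of the filling slopes of $\widehat J'_0$, and on every remaining piece pick any slope in the (nontrivial) kernel of the corresponding boundary restriction of $\rho$, which exists since the image there is cyclic. The additivity principle (Theorem \ref{additive}) then decomposes $\mathrm{Vol}_G(\tilde M',\rho)$ as a sum over the resulting Dehn-filled JSJ pieces. Summands from the cyclic-image pieces vanish by Lemma \ref{Almost trivial representation}. Each surviving summand corresponds to a distinguished elevation and equals $\pm d_i\cdot\mathrm{Vol}_G(\widehat J'_0,\eta)$, where $d_i>0$ is the degree of the corresponding Dehn-filled cover over $\widehat J'_0$. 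By Lemma \ref{class inversion}, each class inversion can be realized by an action of $\mathrm{Iso}\,\Sft$ whose projection to $\Sft$ preserves orientation, so all signs are positive, and the total is a positive integer multiple of $\mathrm{Vol}_G(\widehat J'_0,\eta)$, as claimed.

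The only non-routine aspect of the argument is this final sign control. Without the orientation-preserving refinement built into Lemma \ref{class inversion}, the class inversion ambiguity from Theorem \ref{virtualExtensionRep} could in principle introduce contributions of opposite sign that cancel rather than add; once orientation is guaranteed, the remainder is bookkeeping between the two successive covers $\tilde M'\to M_1\to M$ and the factorizations $\tilde J'\to J'_0\to\widehat J'_0$.
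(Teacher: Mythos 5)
Your proof is correct and follows essentially the same route the paper takes (the paper presents this proposition as a repackaging of the Section~3 argument via Proposition~4.2 of \cite{DLW}, Theorem~\ref{virtualExtensionRep}, the additivity principle, Lemma~\ref{Almost trivial representation}, and the orientation-preserving refinement in Lemma~\ref{class inversion}, which is precisely the machinery you invoke). The one place you are slightly more explicit than the paper is the observation that the statement allows the positive multiple in part~(2) to depend on $\widehat{J}'_0$ and $\eta$, so the simple factor-through-a-cover argument suffices; the paper only needs the finer CI-completion apparatus for the strengthened Theorem~\ref{controlledvirtualSeifertRepresentation} where that multiple must be a fixed constant $\alpha_0$.
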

	
	The first part of Proposition \ref{refine-virtualSeifertRepresentation}
	is a specialized refined statement of Theorem \ref{virtualExtensionRep};
	the second part supplies a slot to connect with Theorem \ref{virt-dom}.
	Therefore, Theorem \ref{virt-positive} is a consequence of Proposition \ref{refine-virtualSeifertRepresentation}
	and Theorem \ref{virt-dom}.	The stronger result, Theorem \ref{main-unbounded}, will follow from
	an efficient version of this proposition (Theorem \ref{controlledvirtualSeifertRepresentation})
	plus the efficient virtual domination (Theorem \ref{efficient-virt-domi}).
	%
	\end{remark}

\section{Efficient virtual domination by hyperbolic 3-manifolds}\label{Sec-Theorem-efficient-virt-domi}

In this section, we employ the work of A.~Gaifullin \cite{Ga}
to derive Theorem \ref{efficient-virt-domi} from Theorem \ref{virt-dom}.
We quote the statement below for convenience.

\begin{theorem*}[\ref{efficient-virt-domi}]
	For any closed oriented hyperbolic $3$--manifold $M$, there exists a positive constant $c(M)$ such that the following statement holds. For any closed
	oriented $3$--manifold $N$ and any $\epsilon>0$, there exists a finite cover $M'$ of $M$ which admits a non--zero degree map $f:M'\rightarrow N$, such that
		$$\|M'\|\leq c(M)\cdot|\mathrm{deg}(f)|\cdot(\|N\|+\epsilon).$$
\end{theorem*}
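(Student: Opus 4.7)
The plan is to combine Theorem \ref{virt-dom} with A.~Gaifullin's universal realization theorem from \cite{Ga}. The input I would quote from \cite{Ga} is the existence of a single closed oriented hyperbolic $3$-manifold $M_0$ (independent of all subsequent data) and a constant $C_0>0$ with the following property: for every closed oriented $3$-manifold $N$ and every $\epsilon>0$, there exist a finite cover $h\co M_0'\to M_0$ and a non-zero degree map $f_0\co M_0'\to N$ satisfying
$$\|M_0'\|\,\le\,C_0\cdot|\deg f_0|\cdot(\|N\|+\epsilon);$$
since $M_0$ is hyperbolic, this is the same as $[M_0':M_0]\cdot\|M_0\|\le C_0\cdot|\deg f_0|\cdot(\|N\|+\epsilon)$.

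Next I would apply Theorem \ref{virt-dom} \emph{once}, to the given hyperbolic $M$ and the fixed target $M_0$, obtaining a finite cover $\t M\to M$ together with a $\pi_1$-surjective degree-$2$ map $g\co\t M\to M_0$. The key point is that $\t M$, $g$, and the index $D:=[\t M:M]$ depend only on $M$ (through the fixed $M_0$), not on the eventual target $N$. For arbitrary given $N$ and $\epsilon$, I would then invoke the URC input to produce $h\co M_0'\to M_0$ and $f_0\co M_0'\to N$ as above, and form the finite cover $\pi\co M'\to\t M$ corresponding to the subgroup $g_\ast^{-1}(\pi_1(M_0'))\le\pi_1(\t M)$. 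Surjectivity of $g_\ast$ forces $[M':\t M]=[M_0':M_0]$, and the lifting criterion yields a map $\t g\co M'\to M_0'$ covering $g$; multiplicativity of degrees across the resulting commutative square then gives $|\deg\t g|=2$. Setting $f:=f_0\circ\t g$, I obtain a non-zero degree map $M'\to N$ with $|\deg f|=2\,|\deg f_0|$.

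Since $M'$ is a finite cover of the hyperbolic manifold $M$, simplicial volume is strictly multiplicative:
$$\|M'\|\,=\,[M':\t M]\cdot[\t M:M]\cdot\|M\|\,=\,[M_0':M_0]\cdot D\cdot\|M\|\,=\,\frac{D\,\|M\|}{\|M_0\|}\cdot\|M_0'\|.$$
Substituting the URC estimate and rewriting $|\deg f_0|=|\deg f|/2$ yields the desired bound with
$$c(M)\,:=\,\frac{C_0\,D\,\|M\|}{2\,\|M_0\|},$$
a positive constant depending only on $M$ (while $M_0$ and $C_0$ are absolute).

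The hard part, as I see it, is extracting from \cite{Ga} the precise \emph{efficient} URC statement above---that the cover degree $[M_0':M_0]$ can be taken linear simultaneously in $|\deg f_0|$ and in $\|N\|+\epsilon$, with constants independent of $N$. Once that input is secured, the remaining argument is a small diagram chase together with the strict multiplicativity of simplicial volume under covers of hyperbolic $3$-manifolds; no delicate control is needed on the uncontrolled degree $D=[\t M:M]$ provided by Theorem \ref{virt-dom}, since it is absorbed into $c(M)$.
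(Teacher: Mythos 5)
Your proposal follows essentially the same route as the paper: reduce to a single URC manifold $M_0$ via one application of Theorem~\ref{virt-dom}, invoke Gaifullin's quantitative URC property, pull back the resulting cover along the $\pi_1$-surjection, and compose. The diagram chase and degree bookkeeping are correct, and the ``hard part'' you identify is exactly what the paper supplies via Theorem~\ref{Ga-quantitative} (the counting estimate from \cite[Proposition~5.3]{Ga}) together with Lemma~\ref{simplicial}, which converts the simplex count into a simplicial-volume bound.

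One wrinkle worth flagging: you assert that $M_0$ can be taken hyperbolic, but Gaifullin's URC $3$-manifold $M_{\Pi^3}$ is \emph{not} hyperbolic, and a hyperbolic $M_0$ with the efficient URC property you quote is precisely an instance of the theorem being proved, so that phrasing is circular as stated. The fix is harmless: take $M_0=M_{\Pi^3}$ and drop the hyperbolicity hypothesis, since your argument only uses the multiplicativity of simplicial volume under finite covers (true for all closed oriented manifolds) and the positivity $\|M_0\|>0$ needed to divide. The positivity $\|M_{\Pi^3}\|>0$ does hold (e.g.\ because $M_{\Pi^3}$ virtually dominates manifolds with positive simplicial volume), but it is not automatic. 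The paper's formulation sidesteps this point by stating Gaifullin's estimate directly in terms of the covering index $[M'_{\Pi^3}:M_{\Pi^3}]$ (counting permutahedra) rather than $\|M'_{\Pi^3}\|$, so no division by $\|M_{\Pi^3}\|$ is ever needed.
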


\begin{remark}
	In fact, the same statement holds for any closed orientable manifold
	which virtually dominates all closed orientable manifolds of the same dimension.
	For dimension $3$, all hyperbolic manifolds have such property \cite{Sun}.
	For any arbitrary dimension,
	manifolds with this property have been discovered by
	A.~Gaifullin \cite{Ga}.
	The $3$-dimension example $M_{\Pi^3}$ of Gaifullin is not a hyperbolic manifold,
	but we point out that a constant $c_0=24v_8\,/\,v_3\approx 86.64$
	is sufficient for this case, where $v_8$ is the volume of the ideal regular hyperbolic octahedron
	and $v_3$ is the volume of the ideal regular hyperbolic tetrahedron.
\end{remark}
	
	\subsection{URC manifolds}
	As introduced by A.~Gaifullin \cite{Ga}, a closed orientable (topological) $n$-manifold $M$ is said to have
	the property of \emph{Universal Realisation of Cycles} (\emph{URC})
	if every homology class of $H_n(X;\,\Z)$
	of an arbitrary topological space $X$ has a positive integral multiple
	which can be realized by the fundamental class of a finite cover $M'$ of $M$, via a map $f:M'\to X$.
	
	For any arbitrary dimension $n$, Gaifullin shows that examples of URC $n$-manifolds
	can be obtained by taking some $2^n$-sheeted cover
		$$M_{\Pi^n}$$
	of some $n$-dimensional orbifold $\Pi^n$. More precisely,
	the underlying topology space of $\Pi^n$ is
	the \emph{permutahedron}, namely, the polyhedron combinatorially isomorphic to
	the convex hull of the points $(\sigma(1),\cdots,\sigma(n+1))$
	of $\R^{n+1}$, where $\sigma$ runs over all permutations of $\{1,\dots,n+1\}$.
	The orbifold structure of $\Pi^n$ is given
	so that each codimension-$1$ face is a reflection wall,
	so each codimension-$k$ face is the local fixed point set of a $\Z_2^k$--action.
	The abelian characteristic cover
	of $\Pi^n$ on which $H_1(\Pi^n;\Z_2)\cong \Z_2^n$
	acts is the orientable closed $n$-manifold $M_{\Pi^n}$.
	In particular, $M_{\Pi^n}$ can be obtained by facet pairing of $2^n$ permutahedra.
	
	The following quantitative version of Gaifullin's proof \cite[Section 5]{Ga}
	is important for our application.
	Recall that a (compact) \emph{pseudo $n$-manifold} is a finite simplicial complex of which each simplex
	is contained in an $n$-simplex and each $(n-1)$-simplex is contained in exactly two $n$-simplices.
	Topologically, a pseudo $n$-manifold is just a manifold away from its codimension-$2$ skeleton.
	A \emph{strongly connected orientable} pseudo $n$-manifold means that, away from the codimension-$2$ skeleton,
	the manifold is connected and orientable,
	or equivalently, that the $n$-dimensional integral homology is isomorphic to $\Z$.
	In particular, the concept of (unsigned) mapping degree can be extended similarly to maps
	between strongly connected orientable pseudo $n$-manifolds.
	
	\begin{theorem}[{See \cite[Proposition 5.3]{Ga}}]\label{Ga-quantitative}
		For any strongly connected orientable pseudo $n$-manifold $Z$, there exists a finite cover
		$M'_{\Pi^n}$ of $M_{\Pi^n}$ and a non-zero degree map $f_1:M'_{\Pi^n}\to Z$, such that
			$$\#\{n\textrm{-permutahedra of }M'_{\Pi^n}\}\,=\,
			(n+1)!\cdot|\mathrm{deg}(f_1)|\cdot\#\{n\textrm{-simplices of }Z\}.$$
	\end{theorem}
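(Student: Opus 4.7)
The plan is to adapt Gaifullin's URC construction to quantitatively track the number of permutahedral cells in the cover. The key combinatorial fact underlying the approach is that the permutahedron $\Pi^n$ is naturally compatible with the barycentric subdivision of an $n$-simplex: the $(n+1)!$ small $n$-simplices in the subdivision are labelled by the symmetric group $S_{n+1}$, and adjacent small simplices differ by adjacent transpositions, matching the facet-pairing combinatorics of $\Pi^n$.

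Given the pseudo $n$-manifold $Z$, I would first construct an auxiliary cell structure $Z^{\sharp}$ on $Z$ by taking the barycentric subdivision of each $n$-simplex of $Z$---obtaining $(n+1)!\cdot\#\{n\text{-simplices of }Z\}$ small $n$-simplices indexed by $S_{n+1}$---and then replacing each labelled small simplex by a combinatorial copy of $\Pi^n$ via the standard degree-one collapse $\Pi^n\to\Delta^n$. The facet-matching of adjacent permutahedral cells, both within a single $n$-simplex of $Z$ and across shared $(n-1)$-faces of $Z$, corresponds precisely to the reflection-wall identifications on the facets of $\Pi^n$. This yields a degree-one projection $\pi:Z^{\sharp}\to Z$ together with a combinatorial orbifold map $\phi:Z^{\sharp}\to\Pi^n$ sending each permutahedral cell homeomorphically onto $\Pi^n$, exhibiting $\phi$ as an orbifold covering of degree $(n+1)!\cdot\#\{n\text{-simplices of }Z\}$ away from the codimension-$2$ singular strata.

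Next, I would pull back the characteristic small cover $p:M_{\Pi^n}\to\Pi^n$ of degree $2^n$ along $\phi$ to form a fibre-product cover $\tilde Z \to Z^{\sharp}$. Since $M_{\Pi^n}$ is the $\Z_2^n$-cover resolving all orbifold singularities of $\Pi^n$, the corresponding orbifold singularities of $Z^{\sharp}$ inherited from $\phi$ are likewise resolved in $\tilde Z$; after possibly passing to a further finite cover to absorb any residual codimension-$2$ pseudo-manifold singularities of $Z$, one obtains a closed manifold cover $M'_{\Pi^n}$ of $M_{\Pi^n}$ equipped with a projection onto $Z^{\sharp}$ of some positive degree $e$. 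Counting the permutahedral cells two ways through the fibre product gives $\#\{n\text{-permutahedra of }M'_{\Pi^n}\}=(n+1)!\cdot e\cdot\#\{n\text{-simplices of }Z\}$. Setting $f_1:M'_{\Pi^n}\to Z^{\sharp}\stackrel{\pi}{\to}Z$, we have $|\mathrm{deg}(f_1)|=e$ since $\pi$ has degree one, and we may ensure $e\ge 1$ by selecting a component of the fibre product whose projection onto $Z^{\sharp}$ is surjective (such components exist because the component-degrees sum to $2^n$).

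The main technical obstacle is verifying that $\phi$ is globally a morphism of orbifolds (rather than merely a local one) and that the fibre-product construction genuinely delivers a manifold cover of $M_{\Pi^n}$. The first point reduces to the consistency of the $S_{n+1}$-labellings of barycenters across shared $(n-1)$-faces of $Z$, which follows from the fact that both sides of any such face induce the same flag of barycenters; the second requires careful handling of the codimension-$2$ strata of $Z$, potentially passing to further covers of $M_{\Pi^n}$ to eliminate any non-manifold points. Granting these points, the required cell-count identity is a matter of elementary bookkeeping through the tower of covers described above.
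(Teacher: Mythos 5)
The paper does not prove Theorem~\ref{Ga-quantitative}: it cites \cite[Proposition~5.3]{Ga} verbatim and the accompanying remark only translates Gaifullin's notation ($\widehat{M}_{\Pi^n}=\mathcal{U}_{\Pi^n}/\Gamma_H$, $|W:\Gamma_H|$, $|A|$) into the notation used here. So what you have written is an attempted reconstruction of Gaifullin's argument, and it is worth pointing out where it departs from it and where it breaks down.

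There is a genuine gap, concentrated in the fibre-product step. Your construction would make $\tilde Z = M_{\Pi^n}\times_{\Pi^n}Z^{\sharp}$ a manifold only if $\phi:Z^{\sharp}\to\Pi^n$ were an orbifold covering of the reflection orbifold $\Pi^n$, and that fails in codimension $\geq 2$. Already when $Z$ is a closed manifold, the link of an $(n-2)$-simplex of the barycentric subdivision $Z'$ is a circle of some even length $2m$, and for the $2m$ permutahedral cells around the corresponding codimension-$2$ stratum of $Z^{\sharp}$ to cover the $\ZZ_2\times\ZZ_2$ corner of $\Pi^n$ requires a specific local matching that you do not verify; and for a general strongly connected pseudo-manifold $Z$ the link of a lower-dimensional simplex need not even be a sphere, so $Z^{\sharp}$ itself has non-manifold points. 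The fibre product then inherits exactly these singularities, and ``passing to a further finite cover of $M_{\Pi^n}$'' cannot remove them: pulling back along a further cover $M''_{\Pi^n}\to M_{\Pi^n}$ just takes a further fibre product with the same singular base $Z^{\sharp}$, so the singular locus persists. Indeed, the whole point of \cite[Proposition~5.3]{Ga} is that $M'_{\Pi^n}$ is \emph{not} obtained as a cover of $Z$ (or of any space built from $Z$); $Z$ has no such manifold cover in general. Gaifullin instead constructs a torsion-free finite-index subgroup $\Gamma_H$ of the right-angled Coxeter group $W$ of $\Pi^n$, lying inside the commutator subgroup that defines $M_{\Pi^n}$; torsion-freeness makes $\mathcal{U}_{\Pi^n}/\Gamma_H$ a closed manifold a priori, and then one separately builds a simplexwise map to the barycentric subdivision of $Z$ whose degree is $|W:\Gamma_H|/|A|$. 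The non-zero-degree map is not a covering map, and the manifold structure on $M'_{\Pi^n}$ is established on the group-theoretic side, not by resolving singularities of a pullback.

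A smaller issue is the opening combinatorial claim. The $(n+1)!$ small $n$-simplices of the barycentric subdivision of $\Delta^n$ are indexed by permutations and two of them share a facet when the permutations differ by an adjacent transposition; but that matches the \emph{edge} structure (the $1$-skeleton) of $\Pi^n$, not its facet-pairing structure. The facets of $\Pi^n$ are indexed by the $2^{n+1}-2$ proper nonempty subsets of $\{1,\dots,n+1\}$, and a small $n$-simplex has only $n+1$ facets, so ``replacing each small simplex by a permutahedron so that the facet identifications agree'' does not make literal sense. The true relationship is that $sd(\partial\Delta^n)$ is the boundary complex of the polar dual of $\Pi^n$; this duality is what Gaifullin exploits, but in a way that does not reduce to a pushforward of cell structures as described. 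Once the fibre-product mechanism is abandoned, the remaining degree-count bookkeeping in your proposal does not by itself yield the theorem, since it presupposes that mechanism.
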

	
	\begin{remark}
		The map $f_1$ is as asserted by \cite[Proposition 5.3]{Ga}.
		The cover $\widehat{M}_{\Pi^n}=\mathcal{U}_{\Pi^n}/\Gamma_H$ there
		is rewritten as $M'_{\Pi^n}$ in our paraphrase.
		To compare with the statement of \cite[Proposition 5.3]{Ga},
		the index $|W:\Gamma_H|$ there equals the number of permutahedra in $M'_{\Pi^n}$ here;
		the notation $|A|$ there stands for the number of $n$-simplices in the barycentric subdivision
		of $Z$, which equals $(n+1)!$ times the number of $n$-simplices of $Z$ here. For dimension $3$, all orientable closed hyperbolic manifold are known to be
        URC \cite{Sun}.
	\end{remark}		
		
	\subsection{Virtual domination through URC $3$-manifolds}
	We combine the results of \cite{Ga} and \cite{Sun} to prove Theorem \ref{efficient-virt-domi}.
	The following lemma allows us to create an efficient virtual realization of the fundamental class of $N$.
	


\begin{lemma}\label{simplicial}
For any closed oriented $n$--manifold $N$ and any $\epsilon>0$, there exists a connected oriented pseudo $n$--manifold $Z$ and a non--zero degree map
$f:Z\rightarrow N$, such that $$\#\{n\textrm{--simplices of }Z\}\leq |\mathrm{deg}(f)|\cdot (\|N\|+\epsilon).$$
\end{lemma}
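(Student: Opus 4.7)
The plan is to reduce the lemma to the chain-level definition of the Gromov norm and then geometrically realise an efficient integral fundamental cycle by gluing standard simplices along their boundary faces.

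First I would choose a real singular cycle $z=\sum_{i=1}^{k}a_{i}\sigma_{i}$ with $\sigma_{i}\co\Delta^{n}\to N$ representing $[N]\in H_{n}(N;\R)$ and satisfying $\sum_{i}|a_{i}|<\|N\|+\epsilon/2$. The next step is to approximate $z$ by a rational cycle still representing $[N]$ with only a small increase in $\ell^{1}$-norm. Working in the finite-dimensional real vector space $V$ of chains supported on $\{\sigma_{1},\dots,\sigma_{k}\}$, both the cycles in $V$ and the intersection of the $n$-boundaries with $V$ are rational subspaces, and because $[N]\in H_{n}(N;\mathbb{Q})$ the affine subspace of cycles in $V$ representing $[N]$ is rationally defined. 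Rational points are dense inside it, so one obtains a rational cycle $z'=\sum a'_{i}\sigma_{i}$ with $[z']=[N]$ and $\sum|a'_{i}|<\|N\|+\epsilon$. Clearing denominators produces an integer $d\geq 1$ and an integral cycle $w=dz'=\sum n_{i}\sigma_{i}$ with $[w]=d[N]$ in $H_{n}(N;\Z)$ and $\sum|n_{i}|<d(\|N\|+\epsilon)$.

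The pseudo-manifold is then built directly from $w$. For each $i$ I would take $|n_{i}|$ labelled copies of the oriented $n$-simplex $\Delta^{n}$, with the orientation reversed precisely when $n_{i}<0$. The total number of top simplices of the resulting disjoint union is $\sum_{i}|n_{i}|$, and the $\sigma_{i}$'s assemble into a map from this disjoint union to $N$. The cycle condition $\partial w=0$ says exactly that for every singular $(n-1)$-simplex $\tau\co\Delta^{n-1}\to N$ the signed count of facet occurrences of $\tau$ among these copies is zero, so I can pair up positively and negatively oriented facets labelled by $\tau$ and glue each matched pair via the identity of $\Delta^{n-1}$. The quotient $Z_{0}$ is a compact oriented pseudo $n$-manifold, the gluings are compatible with the $\sigma_{i}$'s, and the resulting continuous map $f_{0}\co Z_{0}\to N$ realises the integral cycle $w$ by construction.

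Finally, connectedness is arranged by passing to a component of the dual graph. The dual graph of $Z_{0}$ decomposes $Z_{0}$ into strongly connected oriented pseudo $n$-manifolds $Z_{1},\dots,Z_{m}$; writing $s_{j}$ for the number of top simplices of $Z_{j}$ and $d_{j}\in\Z$ for the degree of $f_{0}|_{Z_{j}}\co Z_{j}\to N$, one has $\sum_{j}s_{j}=\sum_{i}|n_{i}|<d(\|N\|+\epsilon)$ while $\sum_{j}d_{j}=d$, and hence $\sum_{j}|d_{j}|\geq d>0$. A pigeonhole estimate then produces an index $j$ with $d_{j}\neq 0$ and $s_{j}/|d_{j}|\leq(\sum_{j}s_{j})/(\sum_{j}|d_{j}|)<\|N\|+\epsilon$, and I would set $Z=Z_{j}$ and $f=f_{0}|_{Z_{j}}$. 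The one step that I expect to require the most care is the rational approximation that still represents the class $[N]$; the simplex-gluing construction and the pigeonhole selection of a strongly connected component are otherwise routine combinatorics.
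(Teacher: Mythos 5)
Your proof is correct and follows essentially the same approach as the paper: approximate an efficient real fundamental cycle by a rational one, clear denominators, glue copies of the standard simplex along cancelling facets to obtain a pseudo-manifold realizing a multiple of $[N]$, and extract a component with a good ratio of simplices to degree. The only cosmetic difference is that the paper first normalizes the rational coefficients to be non-negative by reversing orientations of the $\sigma_i$, whereas you keep signed coefficients and reverse the orientations of the simplex copies; and you spell out the rational-approximation step (that the affine space of cycles in the span of $\{\sigma_i\}$ representing $[N]$ is rationally defined) more carefully than the paper's terse remark about integer linear equations, which is a welcome clarification.
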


\begin{proof}
By the definition of the simplicial volume, for any $\epsilon>0$, there exists a singular cycle $$\alpha=\sum_{i=1}^k s_i\sigma_i\in Z_n(N,\mathbb{R})$$
such that $[\alpha]=[N]\in H_n(N,\mathbb{R})$ and $$\sum_{i=1}^k |s_i|<\|N\|+\epsilon.$$ Here $s_i$ are real numbers and $\sigma_i$ are maps from the
standard oriented $n$-simplex to $N$.

Since
$$\left\{
\begin{array}{ccc}
\sum_{i=1}^k x_i\sigma_i & \in & Z_n(N,\mathbb{R})\\
{\big [}\sum_{i=1}^k x_i\sigma_i{\big ]} &= &{\big [}N {\big ]}\in H_n(N,\mathbb{R})
\end{array}
\right. $$
can be expressed as linear equations with integer coefficients, it has a rational solution $(r_1,\cdots,r_k)$ close to $(s_1,\cdots,s_k)$ such that $r_i\in \mathbb{Q}$
and $$\sum_{i=1}^k |r_i|<\|N\|+\epsilon.$$ In particular, $[\sum_{i=1}^k r_i\sigma_i] =[N]\in H_n(N,\mathbb{R})$ holds. Here we can suppose that each $r_i$ is
--negative, by reversing the orientation of $\sigma_i$ if necessary.

Let the least common multiple of the denominators of $r_i$ be denoted $m$, then
$$\beta=m(\sum_{i=1}^k r_i\sigma_i)=\sum_{i=1}^k (mr_i)\sigma_i\in Z_n(N;\mathbb{Z})$$ is an
integer linear combination of $\sigma_i$ and $[\beta]=m[N]\in H_n(N,\mathbb{R})$.

Here we can think that we have $mr_i$ copies of the standard oriented $n$--simplex that is mapped as $\sigma_i$, $i=1,2,\cdots,k$. The condition that
$\sum_{i=1}^k (mr_i)\sigma_i$ being an $n$--cycle implies that we can find a pairing of all the $(n-1)$--dimensional faces of the collection of copies
of $\sigma_i$s, such that each such pair are mapped to the same singular $(n-1)$-simplex in $N$, with opposite orientation.

This pairing instructs us to build an oriented pseudo--manifold $Z'$ (possibly disconnected). It is given by taking $\sum_{i=1}^k mr_i$ copies of the standard
oriented $n$--simplex and pasting them together by the pairing given above. Then the singular $n$-simplices $\{\sigma_i\}_{i=1}^k$ induces a map
$f_0:Z'\rightarrow N$.

Let $[Z']$ be the homology class in $H_n(Z')$ which is represented by the $n$--cycle which takes each oriented $n$--simplex in $Z'$ exactly once. It is easy to
see that $f_0([Z'])=[\beta]=m[N]$, so $f_0$ has mapping degree $\text{deg}(f_0)=m$. Moreover, the number of $n$--simplices in $Z'$ is just
$$\sum_{i=1}^k (mr_i)=m(\sum_{i=1}^k r_i)<m(\|N\|+\epsilon)=\text{deg}(f_0)\cdot(\|N\|+\epsilon).$$

If $Z'$ is connected, we are done with the proof. If $Z'$ is disconnected, take the component $Z$ of $Z'$ such that
$$\frac{\text{deg}(f_0|_Z)}{\#\{n\textrm{--simplices of }Z\}}$$ is not smaller than the corresponding number for all the other components of $Z'$.
Then $f=f_0|_Z$ satisfies the desired condition in this lemma.
\end{proof}	
		
	\subsubsection{Construction of $(M',f)$}
	Let $M$ be a closed orientable hyperbolic 3-manifold, and $N$ be any closed orientable $3$-manifold.
	Given any constant $\epsilon>0$, denote by
		$$p:Z\longrightarrow N$$
	a virtual realization of the fundamental class of $N$
	by a strongly connected orientable pseudo $3$-manifold, as guaranteed by Lemma \ref{simplicial}.
	Take a finite cover $M'_{\Pi^3}$ of Gaifullin's URC $3$-manifold $M_{\Pi^3}$ and an efficient domination map
		$$f_1:M'_{\Pi^n}\longrightarrow Z$$
	which come from Theorem \ref{Ga-quantitative}.
	Take a finite cover $\tilde{M}$ of $M$ and a $\pi_1$-surjectively $2$-domination map:
		$$f_2:\tilde{M}\longrightarrow M_{\Pi^3}$$
	which comes from Theorem \ref{virt-dom}.
	Then there exists a unique finite cover $M'$ of $M$, up to isomorphism of covering spaces,
	and a unique $\pi_1$-surjective $2$-domination map $f_2':M'\to \tilde{M}_{\Pi^3}$
	that fit into the following commutative diagram of maps:
	$$\begin{CD}
		M'@> f'_2>> M'_{\Pi^3}\\
		@VVV @VVV\\
		\tilde{M} @>f_2 >> M_{\Pi^3}
	\end{CD}$$
	Indeed, $M'$ is the cover of $\tilde{M}$
	that corresponds to the subgroup $(f_{2\sharp})^{-1}(\pi_1(M'_{\Pi^3}))$ of $\pi_1(\tilde{M})$
	(after choosing some auxiliary basepoints).
	The finite cover $M'$ of $M$ and the composed map:
		$$f:\,M'\stackrel{f'_2}\longrightarrow\tilde{M}_{\Pi^3}\stackrel{f_1}\longrightarrow Z\stackrel{p}\longrightarrow N$$
	are the claimed objects of Theorem \ref{efficient-virt-domi}.
	
	\subsubsection{Verification}
	With the notations above, the commutative diagram above implies:
	$$\frac{\|M'\|}{\|\tilde{M}\|}\,=\,
	[M':\tilde{M}]\,=\,[M'_{\Pi^3}:M_{\Pi^3}]\,=\,
	\frac{\#\{\textrm{permutahedra of }M'_{\Pi^3}\}}{\#\{\textrm{permutahedra of }M_{\Pi^3}\}}.$$
	
	Observe that there are $2^3=8$ permutahedra in Gaifullin's URC $3$-manifold $M_{\Pi^3}$.
	On the other hand, by Theorem \ref{Ga-quantitative} and Lemma \ref{simplicial}, the construction of $M'_{\Pi^3}$ and $Z$ yields:
	\begin{eqnarray*}
	\#\{\textrm{permutahedra of }M'_{\Pi^3}\}
	&=&	4!\cdot|\mathrm{deg}(f_1)|\cdot\#\{\textrm{tetrahetra of }Z\}\\
	&<&	24\cdot|\mathrm{deg}(f_1)|\cdot|\mathrm{deg}(p)|\cdot(\,\|N\|+\epsilon\,)\\
	&=&	\frac{24}{2}\cdot|\mathrm{deg}(f'_2)|\cdot|\mathrm{deg}(f_1)|\cdot|\mathrm{deg}(p)|\cdot(\,\|N\|+\epsilon\,)\\
	&=&	12\cdot|\mathrm{deg}(f)|\cdot(\|N\|+\epsilon).
	\end{eqnarray*}
	Therefore,
	$$\|M'\|\,<\,\frac{12\cdot|\mathrm{deg}(f)|\cdot(\|N\|+\epsilon)\cdot\|\tilde{M}\|}{8}
	\,=\,c_0\cdot|\mathrm{deg}(f)|\cdot(\|N\|+\epsilon).$$
	where the constant $c_0$ is taken to be:
		$$c_0\,=\,\frac32\cdot\|\tilde{M}\|.$$
	Note that the constant $c_0>0$ depends only on the hyperbolic $3$-manifold $M$,
	because $\tilde{M}$ is constructed by Theorem \ref{virt-dom} without
	referring to $N$ or $\epsilon$.
	
	This completes the proof of Theorem \ref{efficient-virt-domi}.

\subsection{Virtual Seifert volume of closed hyperbolic $3$--manifolds}


We have mentioned in the introduction  that Theorem \ref{virt-positive} for closed hyperbolic 3-manifolds follows directly
from Theorem \ref{virt-dom}. Similarly, Theorem \ref{main-unbounded} for hyperbolic closed 3-manifolds is a corollary of Theorem \ref{efficient-virt-domi}.

\begin{corollary}\label{hyper-unbounded}
For any closed oriented hyperbolic $3$--manifold $M$, the set of values
$$\left\{\frac{\SV(M')}{[M':M]}\,:\,M'\textrm{ any finite cover of }M\right\}$$
is not bounded.
\end{corollary}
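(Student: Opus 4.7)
The plan is to feed Theorem \ref{efficient-virt-domi} with targets $N$ whose Seifert volume is positive but whose simplicial volume is zero, and to push $\epsilon$ to zero. Concretely, I would fix once and for all a closed orientable Seifert fibered $3$-manifold $N$ which is a circle bundle over a closed surface of Euler characteristic $\chi<0$ with nonzero Euler number $e$. As recalled in the excerpt, $\SV(N)=4\pi^{2}|\chi|^{2}/|e|>0$, while $\|N\|=0$ because $N$ is a graph manifold.

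For each $\epsilon>0$, Theorem \ref{efficient-virt-domi} supplies a finite cover $M'_\epsilon$ of $M$ and a nonzero degree map $f_\epsilon\co M'_\epsilon\to N$ with
\[
\|M'_\epsilon\|\,\leq\,c(M)\cdot|\mathrm{deg}(f_\epsilon)|\cdot(\|N\|+\epsilon)\,=\,c(M)\cdot|\mathrm{deg}(f_\epsilon)|\cdot\epsilon.
\]
Since $M$ is closed hyperbolic, simplicial volume is multiplicative under finite covers, so $[M'_\epsilon:M]=\|M'_\epsilon\|/\|M\|$. Combining this with the domination property $\SV(M'_\epsilon)\geq|\mathrm{deg}(f_\epsilon)|\cdot\SV(N)$ from Theorem \ref{basic property of volume of presentation} (4), I obtain
\[
\frac{\SV(M'_\epsilon)}{[M'_\epsilon:M]}\,\geq\,\frac{|\mathrm{deg}(f_\epsilon)|\cdot\SV(N)\cdot\|M\|}{\|M'_\epsilon\|}\,\geq\,\frac{\|M\|\cdot\SV(N)}{c(M)\cdot\epsilon}.
\]
As $\epsilon\to 0^{+}$, the right-hand side tends to $+\infty$, so the set of normalized Seifert volumes is unbounded.

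The only subtlety I foresee is making sure the cancellation of $|\mathrm{deg}(f_\epsilon)|$ really occurs; this is exactly what the efficient control in Theorem \ref{efficient-virt-domi} delivers (as opposed to the bare statement of Theorem \ref{virt-dom}, which has no degree control). The crucial input beyond that is the asymmetry $\SV(N)>0=\|N\|$ for our Seifert target, so that a vanishing $\epsilon$ makes the upper bound for $\|M'_\epsilon\|$ small while the lower bound for $\SV(M'_\epsilon)$ remains proportional to $|\mathrm{deg}(f_\epsilon)|$. This completes the proof scheme of Corollary \ref{hyper-unbounded}.
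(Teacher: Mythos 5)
Your proposal is correct and follows essentially the same argument as the paper: choose a Seifert target $N$ with $\SV(N)>0$ and $\|N\|=0$, invoke Theorem \ref{efficient-virt-domi} with $\epsilon\to 0$, use multiplicativity of simplicial volume under finite covers to convert the $\|M'_\epsilon\|$ bound into a $[M'_\epsilon:M]$ bound, and then cancel $|\mathrm{deg}(f_\epsilon)|$ against the domination inequality for $\SV$. The only cosmetic difference is that the paper parametrizes by $\epsilon=1/n$ instead of a free $\epsilon$.
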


\begin{proof}
Take a closed orientable manifold $N$ of non-vanishing Seifert volume
	and vanishing simplicial volume. For example, a circle bundle $N$ with
	Euler class $e\neq0$ over a closed surface of Euler characteristic $\chi<0$ works.

For every positive integer $n$, apply Theorem \ref{efficient-virt-domi}
with $\epsilon=1/n$. There exists a finite cover $M_n\rightarrow M$ and a non-zero
degree map $f_n:M_n\rightarrow N$ such that

$$\|M\|\cdot[M_n:M]=\|M_n\|\leq c(M)\cdot |\text{deg}(f_n)|\cdot(||N||+\frac{1}{n})=c(M)\cdot |\text{deg}(f_n)|\cdot\frac{1}{n}.$$
So we have  $$[M_n:M]\leq \frac{c(M)\cdot |\text{deg}(f_n)|\cdot\frac{1}{n}}{||M||}.$$

Since $\SV(M_n)\geq |\text{deg}(f_n)|\cdot \SV(N)$, we have
$$\frac{\SV(M_n)}{[M_n:M]}\geq \frac{|\text{deg}(f_n)|\cdot{\SV(N)}}{\frac{c(M)\cdot |\text{deg} (f_n)|\cdot\frac{1}{n}}{\|M\|}}
\,=\,n\cdot\frac{\|M\|\cdot \SV(N)}{c(M)}.$$

Since $K={\|M\|\cdot \SV(N)}/{c(M)}$ is a positive constant,
$\{\,{\SV(M_n)}/{[M_n:M]}\,\}$ is not a bounded sequence,
so we are done.
\end{proof}

\section{Positive simplicial volume implies unbounded virtual Seifert volume}\label{Sec-Theorem-main-unbounded}
	In this section, we prove Theorem \ref{main-unbounded} following the strategy of
	the proof of Theorem \ref{virt-positive} summarized in Remark \ref{reformulation}.
	The main body of the proof is the following theorem
	which produces virtual Seifert representations with controlled volume,
	(compare Proposition \ref{refine-virtualSeifertRepresentation}).
	
%
%
				%
					
	\begin{theorem}\label{controlledvirtualSeifertRepresentation}
		Let $M$ be an orientable closed mixed $3$-manifold and $J_0$ be a distinguished hyperbolic JSJ piece of $M$.
		Suppose that $\widehat{J}_0$ is
		a closed hyperbolic Dehn filling of $J_0$
		by sufficiently long boundary slopes.
		
		\begin{enumerate}
		\item
		For any finite cover $\widehat{J}'_0$ of $\widehat{J}_0$ and any representation
			$$\eta:\pi_1(\widehat{J}'_0)\longrightarrow\mathrm{Iso}_e\Sft,$$
		there exist a finite cover
			$$\tilde{M}'\longrightarrow M$$
		and a representation
			$$\rho:\pi_1(\tilde{M}')\longrightarrow\mathrm{Iso}_e\Sft,$$
		with the following properties:
    \begin{itemize}
        \item For one or more elevations $\tilde{J}'$ of $J_0$ contained in $\tilde{M}'$,
        the covering $\tilde{J}'\to J_0$ factors through
				a covering $\tilde{J}'\to J'_0$, where $J'_0\subset \widehat{J}'_0$
				denotes the unique elevation of $J_0\subset \widehat{J}_0$.
				The restriction of $\rho$ to $\pi_1(\tilde{J}')$ is conjugate to
				either the pull-back $\beta^*(\eta)$ or the pull-back $\beta^*(\nu\eta)$,
				where $\beta$ is the composition of the mentioned maps:
					$$\tilde{J}'\stackrel{\mathrm{cov.}}\longrightarrow J'_0\stackrel{\mathrm{fill}}\longrightarrow \widehat{J}'_0;$$
        \item For any elevation $\tilde{J}'$ other than the above, of any JSJ piece
        $J$ of $M$, the restriction of $\rho$ to $\pi_1(\tilde{J})$ has cyclic image,
        possibly trivial.
    \end{itemize}
		\item Furthermore, there exists a positive constant $\alpha_0$ depending only on $M$ and
		the Dehn filling $J_0\to \widehat{J}_0$, such that for any $\widehat{J}'_0$ and $\eta$ above,
		the asserted $\tilde{M}'$ and $\rho$ can be constructed so that
		the sum of the covering degrees $[\tilde{J}':J_0]$ over all the elevations $\tilde{J}'$
		of the $\beta$-pull-back type equals $\alpha_0\cdot[\tilde{M}':M]$. Therefore,		
		$$\frac{\mathrm{Vol}_{\mathrm{Iso}_e\Sft}(\tilde{M}',\rho)}{[\tilde{M}':M]}\,= \,\alpha_0\cdot\frac{\mathrm{Vol}_{\mathrm{Iso}_e\Sft}(\widehat{J}'_0,\eta)}{[\widehat		 {J}'_0:\widehat{J}_0]}.$$
	\end{enumerate}
\end{theorem}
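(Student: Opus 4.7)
The plan is to adapt the strategy from the proof of Theorem \ref{virt-positive} while tracking covering degrees carefully to produce the asserted quantitative control. First, the representation $\eta$ pulls back via the inclusion $J'_0\hookrightarrow\widehat{J}'_0$ to a representation $\eta_0\co\pi_1(J'_0)\to\mathrm{Iso}_e\Sft$ that vanishes on each slope along which $\widehat{J}'_0$ is Dehn-filled. Hence $\eta_0$ has nontrivial kernel on every boundary torus of $J'_0$, and each cyclic image $\eta_0(\pi_1(T))$ lies in a class-invertible collection in $\mathrm{Iso}_e\Sft$ by Lemma \ref{class inversion}. Using \cite[Proposition 4.2]{DLW}, I first pass to a finite cover $M^*\to M$ in which every elevation of $J_0$ factors through $J'_0$, and then apply Theorem \ref{virtualExtensionRep} to $M^*$ with $J'_0$ as the distinguished piece. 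This yields a regular finite cover $\tilde{M}'\to M$ and a representation $\rho\co\pi_1(\tilde{M}')\to\mathrm{Iso}_e\Sft$ satisfying part (1).

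For part (2), I apply the additivity principle (Theorem \ref{additive}): fill each JSJ piece of $\tilde{M}'$ along the slopes on which $\rho$ vanishes, and sum the resulting representation volumes. By Lemma \ref{Almost trivial representation} and the conclusion of Theorem \ref{virtualExtensionRep}, only the \emph{good} elevations $\tilde{J}'$ of $J_0$ (those of $\beta$-pull-back type) contribute. For each such $\tilde{J}'$, the Dehn filling of $\tilde{J}'$ is a degree-$[\tilde{J}':J'_0]$ cover of $\widehat{J}'_0$ on which $\rho$ is conjugate to the pull-back of $\eta$, possibly precomposed with a class inversion. By Lemma \ref{class inversion} these class inversions are realized by orientation-preserving inner automorphisms of $\mathrm{Iso}\,\Sft$ and hence preserve the representation volume with sign. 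Summing, each good elevation contributes $[\tilde{J}':J'_0]\cdot\mathrm{Vol}_{\mathrm{Iso}_e\Sft}(\widehat{J}'_0,\eta)$, and since $[\widehat{J}'_0:\widehat{J}_0]=[J'_0:J_0]$, the desired equation reduces to the degree-counting identity $\sum_{\tilde{J}'}[\tilde{J}':J_0]=\alpha_0\cdot[\tilde{M}':M]$.

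The main obstacle is to ensure that $\alpha_0$ depends only on $M$ and the Dehn filling $J_0\to\widehat{J}_0$, independently of $\widehat{J}'_0$ and $\eta$. A direct application of Theorem \ref{virtualExtensionRep} produces a cover $\tilde{M}'$ whose entire structure varies with the input, so the left-hand ratio could a priori fluctuate. To resolve this I plan to invoke the CI completion framework of Subsection \ref{Subsec-CIcompletion}: first construct a fixed master regular cover $M^\#\to M$ depending only on $M$ and the filling slopes, in which the class-invertibility data needed by Theorem \ref{virtualExtensionRep} is preinstalled; then, for each input $(\widehat{J}'_0,\eta)$, assemble $\tilde{M}'$ in a canonical way by combining $M^\#$ with the cover $J'_0\to J_0$. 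In such a construction the proportion of $\tilde{M}'$ occupied by good elevations of $J_0$ is inherited from $M^\#$ alone, yielding a uniform constant $\alpha_0$ as required.
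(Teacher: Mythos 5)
Your high-level diagnosis is correct: you recognize that applying Theorem~\ref{virtualExtensionRep} directly with target $\mathrm{Iso}_e\Sft$ produces a cover $\tilde{M}'$ that changes with the input $(\widehat{J}'_0,\eta)$, so a uniform $\alpha_0$ cannot be extracted, and you correctly point to CI completions as the way out. But your third paragraph is a wish rather than an argument: you invoke a ``master cover $M^\#$ with class-invertibility data preinstalled'' and say $\tilde{M}'$ is ``assembled in a canonical way,'' without giving the mechanism. That mechanism is precisely what the proof has to supply, and it is not a detail you can leave implicit.

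The missing idea is to change the \emph{target group} in the application of Theorem~\ref{virtualExtensionRep}. The paper applies it exactly once, with target $\pi_1(W)$ where $W$ is the CI completion of $\widehat{J}_0$ with respect to the filling-core geodesics, and with the initial homomorphism $\pi_1(J_0)\to\pi_1(W)$ induced by inclusion; by Proposition~\ref{propertiesCI}(1), $\pi_1(W)$ is class invertible with respect to those subgroups, so the hypotheses are met. The output is a \emph{fixed} finite cover $\tilde{M}\to M$ and a \emph{fixed} $\phi\co\pi_1(\tilde{M})\to\pi_1(W)$, realized as a map $f\co\tilde{M}\to W$, and these depend only on $M$ and $J_0\to\widehat{J}_0$. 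The constant $\alpha_0=[\tilde{\mathcal{J}}:J_0]/[\tilde{M}:M]$ is then determined at this basic level, where $\tilde{\mathcal{J}}$ is the union of JSJ pieces of $\tilde{M}$ on which $\phi$ is non-cyclic. For each input $(\widehat{J}'_0,\eta)$: the CI completion $W'$ of $\widehat{J}'_0$ covers $W$ (Proposition~\ref{propertiesCI}(3)), $\eta$ extends over $\pi_1(W')$ (Proposition~\ref{propertiesCI}(2) and Lemma~\ref{class inversion}), and one takes an elevation $f'\co\tilde{M}'\to W'$ of $f$ with respect to $\kappa\co W'\to W$. Then $\rho$ is the composition $\pi_1(\tilde{M}')\to\pi_1(W')\to\mathrm{Iso}_e\Sft$. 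Because $\tilde{M}'$ arises by elevation from the fixed $f$, the $\beta$-pull-back-type pieces of $\tilde{M}'$ are exactly the preimage of $\tilde{\mathcal{J}}$, and the ratio $[\tilde{\mathcal{J}}':J_0]/[\tilde{M}':M]$ collapses back to $\alpha_0$. Your sketch never factors $\rho$ through $\pi_1(W')$, so it has no elevation step to transport $\alpha_0$ from a fixed base.

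Two smaller points in your part (2) calculation. First, the Dehn filling $\tilde{K}'$ of a good elevation $\tilde{J}'$ is a \emph{branched} covering of $\widehat{J}'_0$ (branched over elevations of the filling cores), not an honest covering; the multiplicativity of representation volume under $\widehat{\beta}$ still holds since $\widehat{\rho}$ is the pullback of $\eta$ along a non-zero degree map, but this should be stated rather than blurred. Second, the passage to $M^*$ via \cite[Proposition 4.2]{DLW} in your first paragraph builds $J'_0$-dependence into the base cover from the start, which is exactly what makes your part (1) argument incompatible with a uniform $\alpha_0$; the paper avoids this by never passing to such an $M^*$.
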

	
	The rest of this section is devoted to the proof of Theorem \ref{controlledvirtualSeifertRepresentation},
	before which we derive Theorem \ref{main-unbounded}
	from Theorem \ref{controlledvirtualSeifertRepresentation} and Corollary \ref{hyper-unbounded}.

\subsection{Proof of Theorem \ref{main-unbounded}}				
	Since we have proved Theorem \ref{main-unbounded} for hyperbolic 3-manifolds (Corollary \ref{hyper-unbounded}),
	we may assume that $M$ is non-geometric with at least one hyperbolic piece, or in other words, \emph{mixed}.
	The mixed case is derived from the hyperbolic case and Theorem \ref{controlledvirtualSeifertRepresentation}.
		
	Take a hyperbolic piece $J$ of $M$ and let $\widehat{J}$ be a closed hyperbolic Dehn filling of $J$.
  By Corollary \ref{hyper-unbounded}, there are finite covers  $\{\widehat{J}'_{n}\}$
	of $\widehat{J}$ such that
	$$\frac{\SV(\widehat{J}'_n)}{[\widehat{J}'_n:\widehat{J}]}\,\geq\, nK$$ for some constant $K>0$.
	 Let
		$$\eta_n:\pi_1(\widehat{J}'_n)\to\mathrm{Iso}_e\Sft$$
		be a representation realizing $\SV(\widehat{J}'_{n})$.
	
	Granting Theorem \ref{controlledvirtualSeifertRepresentation}, 
	there exist finite covers $\tilde{M}'_n$ of $M$ and representations:
		$$\rho_n:\pi_1(\tilde{M}'_n)\to\mathrm{Iso}_e\Sft,$$
	such that
		$$\frac{|\mathrm{Vol}_{\mathrm{Iso}_e\Sft}(\tilde{M}'_n;\rho_n)|}{[\tilde{M}'_n:M]}
		\,=\,\alpha_0\cdot\frac{|\mathrm{Vol}_{\mathrm{Iso}_e\Sft}(\widehat{J}'_n;\eta_n)|}{[\widehat{J}'_n:\widehat{J}]}
		\,=\,\alpha_0\cdot\frac{\SV(\widehat{J}'_n)}{[\widehat{J}'_n:\widehat{J}]},$$
	where the positive constant $\alpha_0$ is determined by $M$ and $J_0\to\widehat{J}_0$.
	Therefore,
		$$
		\frac{\SV(\tilde{M}'_n)}{[\tilde{M}_n':M]}
		\,\geq\,\frac{|\mathrm{Vol}_{\mathrm{Iso}_e\Sft}(\tilde{M}'_n;\rho_n)|}{[\tilde{M}_n':M]}
		\,=\,\alpha_0\cdot\frac{\SV(\widehat{J}'_n)}{[\widehat{J}'_n:\widehat{J}]}
		\,\geq\, n\alpha_0K,$$
	so the sequence $\{\,{\SV(\tilde M_n')}/{[\tilde M'_n:M]}\,\}$ is unbounded.
	This completes the proof of Theorem \ref{main-unbounded}.

\subsection{CI completions of hyperbolic $3$-manifolds}\label{Subsec-CIcompletion}
The statement of Theorem \ref{controlledvirtualSeifertRepresentation} (2) suggests certain
relation between the asserted representation $\rho:\pi_1(\tilde{M}')\longrightarrow\mathrm{Iso}_e\Sft$
and the given representation $\eta:\pi(\widehat{J}_0)\longrightarrow\mathrm{Iso}_e\Sft$.
It would certainly hold if $\rho$ factored through the restriction of $\eta$
to some finite covers of $\widehat{J}_0$.
However, the latter is a much stronger requirement that exceeds our ability.
To overcome this difficulty, we examine the machinery of Theorem \ref{virtualExtensionRep}
and observe that $\rho$ does factor through a finite cover of certain CW complex
associated with $\widehat{J}_0$, which looks like $\widehat{J}_0$ attached with a number of
Klein bottles.
In the following, we formalize the idea and introduce \emph{CI completions},
where CI is a brievation for class inversion.

				In general, given an arbitrary group with a collection of conjugacy classes of abelian subgroups,
				it is possible to embed the group into a larger group which possesses a class inversion
				with respect to the induced collection.
				For concreteness, we only consider the special case of
				CI completions
				for orientable closed hyperbolic $3$-manifolds,
				with respect to a collection of mutually distinct embedded closed geodesics.

				\subsubsection{Construction of the CI completion}\label{Subsec-constructionCI}
				Let $V$ be an orientable closed hyperbolic $3$-manifold,
				and let $\gamma_1,\cdots,\gamma_s$
				be a collection of mutually distinct
				embedded closed geodesics of $V$.
				
				The \emph{CI completion} of $V$ with respect to $\gamma_1,\cdots,\gamma_s$
				is a pair
					$$(W,\sigma_{W})$$
				where $W$ is a specific CW space equipped with a distinguished embedding $V\to W$
				and $\sigma_{W}:W\to W$ is a free involution.
				The construction is as follows.
								
				Take the product space $V \times\Z$
				where $\Z$ is endowed with the discrete topology,
				and for each $\gamma_i$, take a cylinder $L_i$ parametrized as $S^1\times\R$,
				where  $S^1$ is identified with the unit circle of the complex plane $\C$.
				We regard each closed geodesic $\gamma_i$ as a map $S^1\to V$.
				Identify the circles $S^1\times\Z$ of $L_i$
				with closed geodesics of $V\times\Z$
				by taking any point $(z,n)\in S^1\times\Z$
				to either $(\gamma_i(z),n)$ or $(\gamma_i(\bar{z}),n)$ depending on the parity of $n$.
				We agree to use $\gamma_i(z)$ for even $n$ and $\gamma_i(\bar{z})$ for odd $n$.
				The resulting space $\widetilde{W}_\Z$ is equipped with a covering transformation
				$\sigma:\,\widetilde{W}_\Z\to \widetilde{W}_\Z$
				which takes any point $(x,n)\in V\times\Z$
				to $(x,n+1)$, and any point $(z,t)\in L_i$ to $(\bar{z},t+1)$.
				The quotient of $\tilde{W}_\Z$ by the action of $\langle\sigma^2\rangle$
				is a space $W$ with a covering transformation $\sigma_{W}$ induced by $\sigma$.
				
				One may visualize the further quotient space
				$W\,/\,\langle\sigma_{W}\rangle$ as
				a $3$-manifold $V$ with Klein bottles hanging on the closed geodesics
				$\gamma_i$, one on each. Then $W$ is a double cover
				of that space into which $V$ lifts, and on which
				the deck transformation $\sigma_{W}$ acts.
				As a CW space with a free involution, the isomorphism type of
				$(W,\sigma_W)$ is independent of the auxiliary parametrizations
				in the construction, and the isomorphism may further be required to fix
				the distinguished inclusion of $V$.
				
				\subsubsection{Properties of CI completions}
				We study the relation of CI completions with class inversions
				and their behavior under finite covers.
				
				\begin{proposition}\label{propertiesCI}
				Let $V$ be an orientable closed hyperbolic $3$-manifold,
				and let $\gamma_1,\cdots,\gamma_s$
				be a collection of mutually distinct
				embedded closed geodesics of $V$.
				Denote by $(W,\sigma_W)$ the CI completion of $V$
				with respect to $\gamma_1,\cdots,\gamma_s$.
				\begin{enumerate}
						\item The outer automorphism of $\pi_1(W)$
						induced by $\sigma_W$ is a class inversion of $\pi_1(W)$ with respect to the collection
						of conjugacy classes of the maximal cyclic subgroups $\pi_1(\gamma_1),\cdots,\pi_1(\gamma_s)$
						of $\pi_1(W)$
						corresponding to the canonically included free loops;
						\item Suppose that $\mathscr{G}$ is a group which possesses
						a class inversion $[\nu]\in\mathrm{Out}(\mathscr{G})$
						with respect to the conjugacy classes of all the cyclic subgroups.
						Then for any homomorphism $\eta:\pi_1(V)\to \mathscr{G}$
						then there exists an extension of $\eta$ to $\pi_1(W)$:
							$$\eta:\pi_1(W)\to \mathscr{G}.$$
						Moreover, for any representative automorphisms $\sigma_{W\sharp},\nu$ of the outer autormorphisms
						accordingly,
						the image $\eta\sigma_{W\sharp}(\pi_1(V))$ is conjugate to $\nu\eta(\pi_1(V))$ in $\mathscr{G}$.
						\item Suppose that $\kappa:V'\to V$ is a covering map of finite degree.
						Denote by $(W',\sigma_{W'})$ the CI completion of $V'$ with respect to
						all the elevations in $V'$ of $\gamma_1,\cdots,\gamma_s$.
						Then there exists an extension of $\kappa$:
							$$\kappa:W'\to W$$
						which is a covering map
						equivariant under the action of $\sigma_{W'}$ and $\sigma_W$.
						In particular,
						the covering degree is preserved under the extension.
					\end{enumerate}
				\end{proposition}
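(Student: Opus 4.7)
The plan is to analyze the CI completion via the auxiliary quotient $W_0:=W/\langle\sigma_W\rangle$, which by the construction of Subsection \ref{Subsec-constructionCI} is canonically identified with $V$ having a Klein bottle $K_i$ attached along each $\gamma_i$, the attaching curve being a two-sided loop of $K_i$. Then $W\to W_0$ is the orientation double cover with deck transformation $\sigma_W$, and iterated van Kampen yields
\[\pi_1(W_0)\,=\,\pi_1(V)\ast_{\gamma_1=b_1}\pi_1(K_1)\ast\cdots\ast_{\gamma_s=b_s}\pi_1(K_s),\]
where $\pi_1(K_i)=\langle a_i,b_i\mid a_ib_ia_i^{-1}b_i\rangle$ and $b_i$ corresponds to the attaching loop. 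The subgroup $\pi_1(W)\subset\pi_1(W_0)$ is the kernel of the orientation homomorphism $\pi_1(W_0)\to\Z/2$ sending $\pi_1(V)\mapsto 0$ and each $a_i\mapsto 1$. For part $(1)$, any element $x\in\pi_1(W_0)\setminus\pi_1(W)$ conjugates the normal subgroup $\pi_1(W)$ to itself and represents the outer class $[\sigma_{W\sharp}]$; taking $x=a_i$ and invoking the Klein bottle relation $a_ib_ia_i^{-1}=b_i^{-1}$ shows that this representative preserves $\pi_1(\gamma_i)$ and inverts it. Hence $[\sigma_{W\sharp}]$ is a class inversion with respect to $\{\pi_1(\gamma_i)\}_{i=1}^{s}$.

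For part $(2)$, the idea is to enlarge the target group so that $[\nu]$ becomes an honest inner automorphism. Choose a representative $\nu$ of $[\nu]$ and form the extension $\mathscr{G}_+=\mathscr{G}\cup\mathscr{G}t$ in which conjugation by $t\notin\mathscr{G}$ acts as $\nu$ (in the motivating application, $\mathscr{G}=\mathrm{Iso}_e\Sft$ and $\mathscr{G}_+=\mathrm{Iso}\,\Sft$ by Lemma \ref{class inversion}). Since $[\nu]$ inverts every cyclic subgroup of $\mathscr{G}$, for each $i$ there exists $t_i\in\mathscr{G}_+\setminus\mathscr{G}$ with $t_i\,\eta(\gamma_i)\,t_i^{-1}=\eta(\gamma_i)^{-1}$. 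This permits extending $\eta$ over each Klein bottle factor by sending $a_i\mapsto t_i$ and $b_i\mapsto\eta(\gamma_i)$; combining with $\eta$ on $\pi_1(V)$ via the amalgamations produces a homomorphism $\widetilde\eta\co\pi_1(W_0)\to\mathscr{G}_+$. By construction the composite $\pi_1(W_0)\stackrel{\widetilde\eta}{\to}\mathscr{G}_+\to\mathscr{G}_+/\mathscr{G}\cong\Z/2$ agrees with the orientation map on generators, so $\widetilde\eta$ sends $\pi_1(W)$ into $\mathscr{G}$, yielding the desired extension. The compatibility of $\eta\sigma_{W\sharp}(\pi_1(V))$ with $\nu\eta(\pi_1(V))$ up to conjugation in $\mathscr{G}$ follows because $\sigma_{W\sharp}$ is realized by conjugation by $a_i$, and writing $t_i=h_it$ with $h_i\in\mathscr{G}$ one checks that the action of $t_i$ on $\eta(\pi_1(V))$ equals $\mathrm{Inn}(h_i)\circ\nu$ applied to $\eta(\pi_1(V))$.

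For part $(3)$, the CI completion construction is functorial in $V$. The covering $\kappa\co V'\to V$ induces a finite circle cover on each elevation $\gamma'$ of each $\gamma_i$, say of degree $d$, and the corresponding cylinder $L'=S^1\times\R$ in $\widetilde{W}'_\Z$ maps onto the matching cylinder in $\widetilde{W}_\Z$ by the power map $(z,t)\mapsto(z^{d},t)$. Because complex conjugation commutes with $z\mapsto z^{d}$, this wrapping is equivariant under the involutions $(z,t)\mapsto(\bar z,t+1)$; together with the level-wise covering $V'\times\Z\to V\times\Z$, it assembles into a $\sigma$-equivariant covering $\widetilde{W}'_\Z\to\widetilde{W}_\Z$, descending to a $(\sigma_{W'},\sigma_W)$-equivariant covering $\kappa\co W'\to W$ of the same degree $[V':V]$. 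The main obstacle lies in part $(2)$: the class inversion is merely an outer automorphism of $\mathscr{G}$, so the naive extension lands in the larger group $\mathscr{G}_+$; verifying that it actually returns into $\mathscr{G}$ on the index-two subgroup $\pi_1(W)$ relies on matching the two occurrences of $\Z/2$ --- the orientation character of $W\to W_0$ and the quotient $\mathscr{G}_+/\mathscr{G}$ --- which is the conceptual heart of the proposition.
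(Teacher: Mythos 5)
Your parts (1) and (3) are correct and run parallel to the paper's argument, though you phrase part (1) more algebraically (via the quotient $W_0=W/\langle\sigma_W\rangle$, van Kampen, and the explicit Klein-bottle relation $a_ib_ia_i^{-1}=b_i^{-1}$), whereas the paper declares it ``obvious'' from the topological description of $W$ as $V\cup\sigma_W(V)$ joined by annuli; your observation that any $x\in\pi_1(W_0)\setminus\pi_1(W)$ realizes $[\sigma_{W\sharp}]$ is a nice way to make the deck-outer-action concrete. Part (3) agrees with the paper's route through $\bar W=W/\langle\sigma_W\rangle$.

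There is, however, a genuine gap in your proof of part (2). You ``form the extension $\mathscr{G}_+=\mathscr{G}\cup\mathscr{G}t$ in which conjugation by $t$ acts as $\nu$,'' i.e.\ an index-two overgroup of $\mathscr{G}$ whose nontrivial coset realizes $[\nu]$ by conjugation. Such a $\Z/2$ extension is \emph{not} guaranteed to exist for an arbitrary group $\mathscr{G}$ and an arbitrary class inversion $[\nu]$: one first needs $[\nu]^2$ to be trivial in $\mathrm{Out}(\mathscr{G})$, and even then there is an $H^3$-type obstruction to building the extension. Neither is implied by the hypothesis that $[\nu]$ is a class inversion with respect to all cyclic subgroups. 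You acknowledge the specific application ($\mathscr{G}=\mathrm{Iso}_e\Sft$, $\mathscr{G}_+=\mathrm{Iso}\,\Sft$), but the Proposition is stated for general $\mathscr{G}$. The ``conceptual heart'' you identify — matching the orientation character of $W\to W_0$ with the quotient $\mathscr{G}_+/\mathscr{G}$ — is indeed correctly handled \emph{once} $\mathscr{G}_+$ exists; the missing step is the existence of $\mathscr{G}_+$ itself. (Passing instead to the always-available $\mathscr{G}\rtimes_\nu\Z$ does not repair this: the image of $\pi_1(W)$ then lands in $\mathscr{G}\rtimes_{\nu^2}2\Z$, and retracting that onto $\mathscr{G}$ again requires $\nu^2$ to be inner.)

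The paper's proof of (2) sidesteps this entirely by working topologically: it takes a $K(\mathscr{G},1)$ model $X$, realizes $[\nu]$ as a self-map $R:X\to X$ up to free homotopy and $\eta$ as a map $f:V\to X$, then builds $F:W\to X$ by setting $F|_V=f$, $F|_{\sigma_W(V)}=Rf$, and filling in the annuli $A_i,\sigma_W(A_i)$ with the free homotopy between $f\gamma_i$ and the orientation-reversal of $Rf\gamma_i$ supplied by the class inversion. Since $F$ maps into $X$, the induced map on $\pi_1$ automatically lands in $\mathscr{G}$, with no enlargement and no obstruction. If you want to stay group-theoretic, a correct alternative is to use the graph-of-groups decomposition of $\pi_1(W)$ directly (two vertex groups $\pi_1(V)$, $2s$ cyclic edge groups): define the map on the two vertex groups by $\eta$ and by $\nu_1\eta$ for a representative $\nu_1$ inverting $\eta(\gamma_1)$, and choose stable letters in $\mathscr{G}$ for the non-tree edges to satisfy the required conjugacy conditions; the class-inversion hypothesis for each $\eta(\gamma_i)$ guarantees such stable letters exist. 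That avoids ever leaving $\mathscr{G}$.
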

			
				\begin{proof}
					Recall that $W$ is topologically the union of $V$, $\sigma_W(V)$, and annuli $A_i$, $\sigma_W(A_i)$.
					Each annulus $A_i$ has its boundary attached to $V\sqcup \sigma_W(V)$ in such a way that
					$\gamma_i\subset V$ can be freely homotoped to the orientation-reversal of
					$\sigma_W(\gamma_i)\subset\sigma_W(V)$
					through $A_i$, and the annuli $\sigma_W(A_i)$ make the homotopy as well.
					
					Statement (1) is now obvious from the above description.
					
					Statement (2) can also be seen topologically. To this end, let $X$ be a CW model
					for the Eilenberg--MacLane CW space $K(\mathscr{G},1)$.
					Uniquely up to free homotopy, the outer automorphism $[\nu]$ can be realized by a map $R:X\to X$,
					and the homomorphism $\eta$ can be realized	as a map $f:V\to X$.
					With respect to the inclusion $V\to W$, we define a map $F:W\to X$, which extends $f$, as follows.
					First define the restriction of $F$ to $V$ and $\sigma_W(V)$ are $f$ and $Rf$, respectively.
					Since $\nu$ is a class inversion, each $f\gamma_i$ is freely homotopic to the orientation-reversal
					of $Rf\gamma_i$, as a map $S^1\to X$, so the homotopy defines maps $F|:A_i\to X$ and
					$F|:\sigma_W(A_i)\to X$. The resulting map $F:W\to X$ extends $f:V\to X$, so on the level of
					fundamental groups, it gives rise to the claimed extension of $\eta:\pi_1(V)\to\mathscr{G}$
					over $\pi_1(W)$.
					
					Statement (3) follows from construction on further quotient spaces. Observe that
					the quotient space $W\,/\,\langle\sigma_W\rangle$, rewritten as $\bar{W}$,
					is topologically the union of $V$ and Klein bottles $B_i$, where $B_i$ are projected from $A_i$.
					Then any finite covering map $V'\to V$ gives rise to a covering map of the same degree
					$\bar{W}'\to \bar{W}$. The covering of Klein bottles are induced by the coverings of
					$\gamma_i\subset \bar{W}$
					by their elevations. In fact, the covering $\bar{W}'\to \bar{W}$ is unique up to homotopy.
					The covering $\bar{W}'\to \bar{W}$ induces two equivariant covering maps $W'\to W$,
					differing by deck transformation.
					The one that respects the distinguished inclusions is as claimed.
				\end{proof}

			\subsection{Virtual representations through CI completions}
			With our gadgets of CI completions,
			we invoke Theorem \ref{virtualExtensionRep}
			to derive the asserted virtual representations of Theorem \ref{controlledvirtualSeifertRepresentation}.

			\subsubsection{Construction for the basic level}
		Let $M$ be an orientable closed mixed $3$-manifold and $J_0$ be a distinguished hyperbolic JSJ piece of $M$.
		Suppose that $\widehat{J}_0$ is
		a closed hyperbolic Dehn filling of $J_0$
		by sufficiently long boundary slopes,
		which are denoted by $\gamma_1,\cdots,\gamma_s$.
		Let
			$$(W,\sigma_{W})$$
		be the CI completion of $\widehat{J}_0$ with respect to $\gamma_1,\cdots,\gamma_s$,
		(see Subsection \ref{Subsec-constructionCI}).
		Since $\pi_1(W)$ is class invertible with respect to the conjugacy classes
		of subgroups $\pi_1(\gamma_i)$ (Proposition \ref{propertiesCI} (1)),
		Theorem \ref{virtualExtensionRep} can be applied with the target group $\pi_1(W)$
		and the initial homomorphism:
			$$\pi_1(J_0)\longrightarrow \pi_1(W)$$
		induced by the composition of the Dehn filling inclusion $J_0\subset \widehat{J}_0$
		and the canonical inclusion $\widehat{J}_0\subset W$.
		The output is a finite cover $\tilde{M}$ of $M$ together with a homomorphism:
			$$\phi:\,\pi_1(\tilde{M})\longrightarrow \pi_1(W),$$
		with described restrictions to its JSJ pieces.
		Since the CI completion $W$
		is an Eilenberg--MacLane space $K(\pi_1(W),1)$,
		it would be convenient to realize $\phi$ as a map
			$$f:\tilde{M}\longrightarrow W,$$
		which is unique up to homotopy.
		
		Suppose for the moment that we are provided with a representation:
			$$\eta_0:\pi_1(\widehat{J}_0)\longrightarrow\mathrm{Iso}_e\Sft,$$
		rather than a virtual representation.
		By Proposition \ref{propertiesCI} (2)	and Lemma \ref{class inversion},
		there is an extension over $\pi_1(W)$, (which is still denoted by $\eta_0$, regarding the original one
		as restriction,) so that
		the composition:
			$$\tilde{\rho}:\,\pi_1(\tilde{M})\stackrel{\phi}\longrightarrow \pi_1(W)\stackrel{\eta_0}\longrightarrow \mathrm{Iso}_e\Sft$$
		gives rise to a virtual extension of the representation
			$$\rho_0:\,\pi_1(J_0)\longrightarrow\pi_1(\widehat{J}_0)\stackrel{\eta_0|}\longrightarrow \mathrm{Iso}_e\Sft.$$
		
		At this basic level, the virtual extension is nothing but a finer version
		of Theorem \ref{virtualExtensionRep} for the special case of Seifert representations
		about mixed $3$-manifolds. It exhibits a factorization of $\tilde{\rho}$ through
		the CI completion $\pi_1(W)$.
		However, Proposition \ref{propertiesCI} (3) allows us to promote the above construction
		to deal with virtual representations of $\pi_1(\widetilde{J}_0)$.
		
		\subsubsection{Construction of $(\tilde{M}',\rho)$}
		Now suppose as of Theorem \ref{controlledvirtualSeifertRepresentation}
		that $\widehat{J}'_0$ is a finite cover of $\widehat J_0$,
		and
			$$\eta:\pi_1(\widehat{J}'_0)\longrightarrow\mathrm{Iso}_e\Sft$$
		is a Seifert representation of $\pi_1(\widehat{J}'_0)$. Denote by
			$$(W',\sigma_{W'})$$
		the CI completion of $\widehat{J}'_0$ with respect to all the elevations
		of $\gamma_1,\cdots,\gamma_s$. By Proposition \ref{propertiesCI} (3),
		there exists a finite covering map
			$$\kappa:W'\longrightarrow W$$
		which respects the free involutions and the distinguished inclusions.
		In particular,
		$\kappa$ extends the covering $\widehat{J}'_0\to\widehat{J}_0$
		preserving the degree.
		
		Remember that we have obtained a finite cover $\tilde{M}$
		and a map $f:\tilde{M}\to W$ for the basic level.
		Take any elevation of $f$ with respect to $\kappa$, denoted as:
			$$f':\tilde{M}'\longrightarrow W'.$$
		This means that the following diagram is commutative up to homotopy:
		$$\begin{CD}
			\tilde{M}'@> f'>> W'\\
			@VVV  @VV\kappa V\\
			\tilde{M}@>f>> W
		\end{CD}$$
		and $\tilde{M}'\to\tilde{M}$ is the covering of $\tilde{M}$
		which is minimal in the sense that it admits no intermediate covering of this property.
		(More concretely, one may replace $W$	with the mapping cylinder	$Y_f\simeq W$,
		and turn the map $f$ into an inclusion $\tilde{M}\to Y_f$,
		then any elevation $\tilde{M}'\to Y'_f$
		of $\tilde{M}$ in the corresponding finite cover $Y'_f\simeq W'$
		gives rise to some $f':\tilde{M}'\to Y'_f\to W'$ up to homotopy.)
		Since $W'$ is a finite cover of $W$,
		there are only finitely many such elevations $(\tilde{M}',f')$
		up to isomorphism between covering spaces	and homotopy.
		Moreover, the covering degree $[\tilde{M}':\tilde{M}]$ is bounded by $[W':W]$.
		Denote by
			$$\phi':\pi_1(\tilde{M}')\longrightarrow\pi_1(W')$$
		the homomorphism (up to conjugation) induced by $f'$.
		
		Provided with $\eta$ and $\phi'$ above, we extend $\eta$ to be
			$$\eta:\pi_1(W')\longrightarrow\mathrm{Iso}_e\Sft$$
		by Proposition \ref{propertiesCI} (1) and (3) and Lemma \ref{class inversion}.
		The finite cover
			$$\tilde{M}'\longrightarrow M$$
		and the representation:
			$$\rho:\,\pi_1(\tilde{M}')\stackrel{\phi'}\longrightarrow \pi_1(W')\stackrel{\eta}\longrightarrow \mathrm{Iso}_e\Sft$$
		are the claimed objects in the conclusion of Theorem \ref{controlledvirtualSeifertRepresentation}.
		
		Homomorphisms which have been presented can be summarized in the following commutative diagram:
		$$\begin{CD}
		@. \pi_1(\widehat{J}'_0) @> \eta| >> \mathrm{Iso}_e\Sft\\
		@. @VV\mathrm{incl}_\sharp V @VV \mathrm{Id} V\\
		\pi_1(\tilde{M}') @>\phi'>> \pi_1(W') @>\eta>> \mathrm{Iso}_e\Sft\\
		@VV\mathrm{cov}_\sharp V	@VV\kappa_\sharp V\\
		\pi_1(\tilde{M})@>\phi>>\pi_1(W)
		\end{CD}$$
		The homomorphisms $\phi$ and $\phi'$ are realized by maps $f$ and $f'$ respectively.
		The representation $\rho$ that we have constructed is the composition along the middle row.
		
		We are going to verify Theorem \ref{controlledvirtualSeifertRepresentation} (2) in the next three subsections.

		\subsubsection{Restriction to JSJ pieces}
		For any elevation $\tilde{J}'\subset\tilde{M}'$ of a JSJ piece $J\subset M$,
		$\tilde{J}'$ covers a JSJ piece $\tilde{J}$ of $\tilde{M}$.
		Since we have constructed $\phi$ using Theorem \ref{virtualExtensionRep},
		either the restriction of $\phi$ to $\pi_1(\tilde{J})$ has cyclic image,
		or $J$ is the distinguished hyperbolic piece $J_0$ and the restriction of
		$\phi$ to $\pi_1(\tilde{J})$ is one of the following compositions up to conjugation of $\pi_1(W)$:
			$$\pi_1(\tilde{J})\longrightarrow \pi_1(\widehat{J}_0)\longrightarrow \pi_1(W)$$
		or
			$$\pi_1(\tilde{J})\longrightarrow \pi_1(\widehat{J}_0)\longrightarrow \pi_1(W)\stackrel{\sigma_W}\longrightarrow \pi_1(W).$$		
		In the cyclic case, the restriction of $\phi'$ to $\pi_1(\tilde{J}')$ must also have cyclic image
		as $\kappa_\sharp$ is injective. Then the restriction of $\rho$ to $\pi_1(\tilde{J}')$ has cyclic image
		as well. In the other case, the first homomorphism of either composition factors through $\pi_1(J_0)$ via
		the Dehn filling, possibly after homotopy of $f$, we may assume that
		$\tilde{J}$ covers either $J_0$ or $\sigma_W(J_0)$ under the map $f$.
		As $f'$ is an elevation of $f$ with respect to $\kappa$,
		the elevation $\tilde{J}'$ of $\tilde{J}$
		covers either the unique elevation $J'_0$ of $J_0$ or
		the unique elevation $\sigma_{W'}(J'_0)$ of $\sigma_W(J_0)$
		in $W'$.  Note that $\eta$ is equivariant up to conjugacy with respect to
		the class inversions $\sigma_{W'}$ and
		$\nu$, (Proposition \ref{propertiesCI} and Lemma \ref{class inversion}).
		It follows that by taking
			$$\beta:\,\tilde{J}'\longrightarrow J'_0\longrightarrow \widehat{J}'_0$$
		the composition of the covering and the inclusion, the restriction of $\rho$ to $\pi_1(\tilde{J}')$
		is either $\beta^*(\eta)$ or $\beta^*(\nu\eta)$.
		This verifies Theorem \ref{controlledvirtualSeifertRepresentation} (1).
		
		\subsubsection{Count of degree}
		By the consideration about the restriction of $\rho$ to JSJ pieces of $\tilde{M}'$ above,
		we have seen that a JSJ piece $\tilde{J}'$ gives rise to the $\beta$-pull-back type restriction of $\rho$
		if and only if $\tilde{J}'$ covers a JSJ piece $\tilde{J}$ of $\tilde{M}$
		such that $\phi(\pi_1(\tilde{J}))$ is non-cyclic. The union of all such $\tilde{J}$ in $\tilde{M}$
		form a (disconnected)	finite cover $\tilde{\mathcal{J}}$ of the distinguished piece $J_0\subset M$,
		and the union of all $\beta$-pull-back type $\tilde{J}'$ in $\tilde{M}'$ is nothing but
		the preimage $\tilde{\mathcal{J}}'$ of $\tilde{\mathcal{J}}$ in $\tilde{M}'$.
		%
		Therefore, suppose $\alpha_0$
		is the ratio between the total degree of $\beta$-pull-back type JSJ pieces of $\tilde{M}'$ over $J_0$
		and the degree of $\tilde{M}'$:
		$$[\tilde{\mathcal{J}}':J_0]\,=\,\alpha_0\cdot[\tilde{M}':M],$$
		then we observe
		$$\alpha_0\,=\,\frac{[\tilde{\mathcal{J}}':J_0]}{[\tilde{M}':M]}
		\,=\,\frac{[\tilde{\mathcal{J}}':\tilde{\mathcal{J}}]\cdot[\tilde{\mathcal{J}}:J_0]}{[\tilde{M}':\tilde{M}]\cdot[\tilde{M}:M]}
		\,=\,\frac{[\tilde{\mathcal{J}}:J_0]}{[\tilde{M}:M]}.$$		
		Note that $\alpha_0$ depends only on $M$ and $J_0\to\widehat{J}_0$
		since $\tilde{M}$ and $\phi$ are constructed according to them,
		and $\alpha_0$
		is positive because $\tilde{\mathcal{J}}$ is non-empty by Theorem \ref{virtualExtensionRep}.

	\subsubsection{Count of volume} In a very similar situation as in the proof of Theorem
	\ref{virt-positive}, to compute the  volume of the representation
		$$\rho:\pi_1(\tilde{M}')\longrightarrow\mathrm{Iso}_e\Sft,$$
	it suffices to understand the contribution to the representation volume of $\rho$
	from the $\beta$-pull-back type JSJ pieces $\tilde{J}'$ of $\tilde{M}'$.
	Note that the map
		$$\beta:\,\tilde{J}'\stackrel{\mathrm{cov.}}\longrightarrow J'_0\stackrel{\mathrm{fill}}\longrightarrow \widehat{J}'_0$$
	factors through a unique hyperbolic Dehn filling $\tilde{K}'$ of $\tilde{J}'$ which covers
	$\widehat{J}'_0$ branching over elevations of the core curves $\gamma_i$ via a map $\widehat{\beta}$:
		$$\beta:\,\tilde{J}'\stackrel{\mathrm{fill}}\longrightarrow \tilde{K}'\stackrel{\widehat{\beta}}\longrightarrow \widehat{J}'_0$$
	The restriction of $\rho$ to $\pi_1(\tilde{J}')$ thus factorizes as:
		$$\pi_1(\tilde{J}')\stackrel{\mathrm{fill}_\sharp}\longrightarrow
		\pi_1(\tilde{K}')\stackrel{\widehat{\rho}}\longrightarrow \mathrm{Iso}_e\Sft,$$
	where $\widehat{\rho}$ equals the $\widehat{\beta}$-pull-back
	of $\eta$ or $\nu\eta$.
	Note that the class inversion $\nu$ of $\mathrm{Iso}_e\Sft$ is
	realized by the conjugation of an orientation-preserving isomorphism of $\Sft$, so
	$$\mathrm{Vol}_{\mathrm{Iso}_e\Sft}(\widehat{J}'_0;\,\eta)\,=\,\mathrm{Vol}_{\mathrm{Iso}_e\Sft}(\widehat{J}'_0;\,\nu\eta).$$
	It follows from the additivity principle (Theorem \ref{additive}) that  
	the contribution to the representation volume of $\rho$ from the piece $\tilde{J}'$ equals
	$\mathrm{Vol}_{\mathrm{Iso}_e\Sft}(\,\tilde{K}';\,\widehat{\rho}\,)$ and
	\begin{eqnarray*}
	\mathrm{Vol}_{\mathrm{Iso}_e\Sft}(\,\tilde{K}';\,\widehat{\rho}\,)&=&
	|\mathrm{deg}(\widehat{\beta})|\cdot\mathrm{Vol}_{\mathrm{Iso}_e\Sft}(\widehat{J}'_0;\,\eta)\\
	&=&\frac{[\tilde{J}':J_0]}{[\widehat{J}'_0:\widehat{J}_0]}\cdot\mathrm{Vol}_{\mathrm{Iso}_e\Sft}(\widehat{J}'_0;\,\eta).
	\end{eqnarray*}
	On the other hand,
	the contribution from any cyclic type JSJ piece $\tilde{J}'$ of $\tilde{M}'$ is always zero by Lemma \ref{Almost trivial representation}.
	Take the summation of contribution from all JSJ pieces, using the formula of $\alpha_0$
	in the degree count: 
	
	\begin{eqnarray*}
	\mathrm{Vol}_{\mathrm{Iso}_e\Sft}(\tilde{M};\,\rho)
	&=&\sum_{\tilde J'\in \tilde{\mathcal{J}}'}\mathrm{Vol}_{\mathrm{Iso}_e\Sft}(\tilde{K'};\,\widehat{\rho})\\
	&=&\sum_{\tilde J'\in \tilde{\mathcal{J}}'}\frac{[\tilde{J}':J_0]}{[\widehat{J}'_0:\widehat{J}_0]}\cdot\mathrm{Vol}_{\mathrm{Iso}_e\Sft}(\widehat{J}'_0;\,\eta)\\
	&=&\frac{[\tilde{\mathcal{J}}':J_0]}{[\widehat{J}'_0:\widehat{J}_0]}\cdot\mathrm{Vol}_{\mathrm{Iso}_e\Sft}(\widehat{J}'_0;\,\eta)\\
	&=&\alpha_0\cdot\frac{[\tilde{{M}}':M]}{[\widehat{J}'_0:\widehat{J}_0]}\cdot\mathrm{Vol}_{\mathrm{Iso}_e\Sft}(\widehat{J}'_0;\,\eta).
	\end{eqnarray*}
	or equivalently,
	 $$\frac{\mathrm{Vol}_{\mathrm{Iso}_e\Sft}(\tilde{M}';\,\rho)}{[\tilde{M}':M]}\,=\,\alpha_0\cdot\frac{\mathrm{Vol}_{\mathrm{Iso}_e\Sft}(\widehat{J}'_0;\,\eta)}{[\widehat{J}'_0:\widehat{J}_0]}.$$
	This completes the proof of Theorem \ref{controlledvirtualSeifertRepresentation} (2), therefore  the proof of Theorem
    \ref{controlledvirtualSeifertRepresentation}.
				
\section{On covering invariants}\label{Sec-onCoveringInvariants}
Although the covering property does not hold for the representation volumes \cite[Corollary 1.8]{DLW}, we can
stabilize them to obtain covering invariants in the following way.
%

\begin{definition}
	For any closed orientable 3-manifold $N$, define the \emph{covering Seifert volume} of $N$ to be
		$$\CSV(N)\,=\,\varprojlim_{\tilde{N}}\,\frac{\SV(\tilde{N})}{[\tilde{N}:N]},$$
	valued in $[0,+\infty]$,
	where $\tilde{N}$ runs over all the finite covers of $N$.
	Note that the limit exists because $\SV(\tilde{N})\,/\,[\tilde{N}:N]$
	is non-decreasing under passage to finite covers.
	Similarly one can define the \emph{covering hyperbolic volume} $\CHV(M)$.
\end{definition}

\begin{proposition}
If $\CSV$, or $\CHV$, is valued on $[0,+\infty)$ for a class $\mathcal{C}$ of closed orientable 3-manifolds, then it
satisfies both domination property and covering property for $\mathcal{C}$.
\end{proposition}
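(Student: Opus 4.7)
The plan is to reduce everything to a supremum characterization of $\CSV$ and then apply the domination property of $\SV$ (Theorem \ref{basic property of volume of presentation}(4)) to lifted covers. The corresponding $\CHV$ argument is formally identical, so I focus on $\CSV$.

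First I would establish monotonicity: if $\tilde N' \to \tilde N$ is a finite cover of degree $k$, then Theorem \ref{basic property of volume of presentation}(4) gives $\SV(\tilde N') \geq k \cdot \SV(\tilde N)$, so the ratio $\SV(\tilde N)/[\tilde N : N]$ is non-decreasing along refinement. Since any two finite covers of $N$ admit a common finite refinement (intersect the corresponding finite-index subgroups of $\pi_1 N$), the inverse limit reduces to a supremum:
$$\CSV(N) \,=\, \sup_{\tilde N \to N} \frac{\SV(\tilde N)}{[\tilde N : N]}.$$

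For the covering property, I would fix a finite cover $p \colon \tilde N \to N$. Every finite cover $\tilde N'$ of $\tilde N$ is a finite cover of $N$ of degree $[\tilde N' : \tilde N] \cdot [\tilde N : N]$, and conversely any finite cover $\tilde N''$ of $N$ admits a common refinement with $\tilde N$ via subgroup intersection; monotonicity ensures this common refinement has $\SV(\,\cdot\,)/[\,\cdot\,:N]$ at least as large as $\SV(\tilde N'')/[\tilde N'':N]$. Combining these two observations and factoring $[\tilde N:N]$ out of the supremum yields $\CSV(\tilde N) = [\tilde N:N] \cdot \CSV(N)$.

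For the domination property, given a non-zero degree map $f \colon M \to N$ between members of $\mathcal{C}$, I would first note that $f_*\pi_1(M)$ has finite index in $\pi_1(N)$, since otherwise $f$ factors up to homotopy through an infinite cover and has degree zero. Hence for any finite cover $\tilde N$ corresponding to $H \leq \pi_1(N)$ one obtains a finite cover $\tilde M$ of $M$ corresponding to $f_*^{-1}(H)$ together with a lift $\tilde f \colon \tilde M \to \tilde N$. The commutativity of the resulting covering square yields the degree identity $|\deg \tilde f|/[\tilde M:M] = |\deg f|/[\tilde N:N]$, and Theorem \ref{basic property of volume of presentation}(4) applied to $\tilde f$ then gives
$$\frac{\SV(\tilde M)}{[\tilde M : M]} \,\geq\, |\deg f| \cdot \frac{\SV(\tilde N)}{[\tilde N : N]}.$$
Taking supremum over $\tilde N$ on the right and noting that the corresponding $\tilde M$'s are legitimate competitors on the left delivers $\CSV(M) \geq |\deg f| \cdot \CSV(N)$.

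The finiteness hypothesis $\CSV|_\mathcal{C} \subset [0,+\infty)$ is used only to ensure that each $\CSV$ value is a genuine real number, so the algebraic manipulations with sups and multiplicative factors are unambiguous. The main technical subtlety is the degree identity across the lifted covering square and the possibility that the fiber product $M \times_N \tilde N$ is disconnected, but both are handled routinely: one works with the connected component of the lift corresponding to the subgroup $f_*^{-1}(H)$ and invokes the standard covering-space correspondence.
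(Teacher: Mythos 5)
Your proposal is correct and follows essentially the same route as the paper's proof: reformulate $\CSV$ as a supremum using monotonicity (Theorem \ref{basic property of volume of presentation}(4)), obtain the domination property by lifting a cover $\tilde N\to N$ to a cover $\tilde M\to M$ via $f_\ast^{-1}$, and exploit the degree identity across the lifted square together with $\SV$-domination applied to $\tilde f$. The only small divergence is in the covering property: you derive the lower bound $\CSV(\tilde N)\geq[\tilde N:N]\cdot\CSV(N)$ directly from the common-refinement/monotonicity argument, whereas the paper proves only the upper bound directly and then invokes the already-established domination property for the reverse inequality; both are equally valid and of comparable length.
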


\begin{proof}
	We verify the statement for $\CSV$, the argument for $\CHV$ completely similar.
	
	To verify the domonation property, let $f:M\to N$ be any map of non-zero degree between $M,N\in\mathcal{C}$.
	By definition, for any $\epsilon>0$, there is a finite cover
	$\tilde N$ of $N$ such that
		$$\frac{\SV(\tilde N)}{[\tilde N: N]}\,>\,\CSV(N)-\epsilon.$$
	We have the following commutative diagram:
		$$\begin{CD}
			\tilde{M}@> \tilde f>> \tilde N\\
			@VVV  @VV\ V\\
			{M}@>f>> N
		\end{CD}$$
	for the pull-back cover $\tilde{M}$ of $M$ via $f$, which has degree at most $[\tilde{N}:N]$.
	Then we have  $[\tilde M:M]\cdot|\mathrm{deg}(f)|\,=\,[\tilde N:N]\cdot|\mathrm{deg}(\tilde f)|$,
	and $|\mathrm{deg}(f)|\geq |\mathrm{deg}(\tilde f)|$, and
	$\SV(\tilde M)\geq |\mathrm{deg}(\tilde f)|\cdot\SV(\tilde N)$.
	It follows that
		$$\frac{\SV(\tilde M)}{[\tilde M:M]}
		\,=\,\frac{\SV(\tilde M)\cdot|\mathrm{deg}(f)|}{[\tilde N:N]\cdot|\mathrm{deg}(\tilde{f})|}
		\geq\frac{|\mathrm{deg}(f)|\cdot\SV(\tilde N)}{[\tilde N: N]}\geq |\mathrm{deg}(f)|\cdot(\CSV(N)-\epsilon).$$
	Taking the limit over all $\tilde{M}$
	and $\epsilon\to 0+$, we have
	$$\CSV(M)\geq |\mathrm{deg}(f)|\cdot\CSV(N).$$
	
	To verify the covering property, suppose that $f:M\to N$ is a covering map,
	so $\mathrm{deg}(f)$ equals $[M:N]$.
	Then any finite cover $\tilde M$ of $M$ is also a finite cover of $N$.
	By definition we have
	$$\frac{\SV(\tilde M)}{[\tilde M:M]}\,=\,[M:N]\cdot\frac{\SV(\tilde M)}{[\tilde M:N]}
	\,\leq\, [M:N]\cdot\CSV(N)\,=\,|\mathrm{deg}(f)|\cdot\CSV(N).$$
	Taking the limit over all $\tilde{M}$, we have $\CSV(M)\leq|\mathrm{deg}(f)|\cdot\CSV(N)$.
	So indeed we have
	$$\CSV(M)\,=\,|\mathrm{deg}(f)|\cdot\CSV(N),$$
	where the other direction follows from the domination property.
	%
	%
\end{proof}
	
We post some further problems, updating those of \cite[Section 8]{DLW}.
%

\begin{problem}
	Does $\CSV(M)$ exist in $(0,+\infty)$	for every closed orientable non-geometric graph manifold $M$?
\end{problem}

A positive answer would provide a nowhere vanishing invariant with the covering property
in the class of closed orientable non-geometric graph manifolds.
Finding such an invariant was suggested by Thurston (\cite[Problem 3.16]{Ki}).
See \cite{LW,Ne,WW} for some attempts motivated by showing the uniqueness of covering degree
between graph manifolds.
The uniqueness is confirmed by \cite{YW} using combinatorial method and matrix theory.
%

\begin{problem}
		Determine the possible growth types and asymptotics of the virtual Seifert volume
		for closed orientable 3-manifolds with positive simplicial volume.
\end{problem}

We speak of the growth with respect to towers of finite covers, as the covering degree increases.
Theorem \ref{main-unbounded} shows that there are towers with super-linear growth.
The estimates of \cite{BG1} imply that the growth must be at most exponential.
	

\begin{problem}
	Is $\CHV(M)$ equal to the simplicial volume for every closed orientable 3-manifold $M$?
\end{problem}

This quantity is at most the simplicial volume (see Remark \ref{remark-unbounded})
and we suspect that the equality might be achieved.

\end{document}